\def\roff  {\mbox{\boldmath$\varepsilon$}}
\newcommand{\0}{\mathbf{0}}
\newcommand{\Id}{\mathbb I}
\newcommand{\Mbb}{\mathbb{M}}
\newcommand{\Cbb}{\mathbb{C}}
\newcommand{\Kbb}{\mathbb{K}}
\def\roff  {\mbox{\boldmath$\varepsilon$}}
\def\rank {\mathop{\rm rank}\nolimits}
\newcommand{\iu}{\mathbf{i}}
\newcolumntype{?}{!{\vrule width 2pt}}
\newcommand{\thickhline}{
	\noalign {\ifnum 0=`}\fi \hrule height 2pt
	\futurelet \reserved@a \@xhline}
\newlength\savedwidth
\newtheorem{remark}{Remark}
\newtheorem{proposition}{Proposition}
\newtheorem{theorem}{Theorem}
\newtheorem{example}{Example}
\begin{document}

\title{
	An algorithm  for the complete solution of the quartic eigenvalue problem
	}
\author{ZLATKO DRMA\v{C},
IVANA \v{S}AIN GLIBI\'{C}}
\date{}
\maketitle

\begin{abstract}
Quartic eigenvalue problem $(\lambda^4 A + \lambda^3 B + \lambda^2C + \lambda D + E)x = \0$  naturally arises in a plethora of applications, e.g. when solving the Orr--\-Sommerfeld equation  in the stability analysis of  the {Poiseuille} flow, in  theoretical analysis and experimental design of  locally resonant phononic plates, modeling a robot  with electric motors in the joints, calibration of catadioptric vision system, or e.g. computation of the guided and leaky modes of a planar waveguide. 
{This paper proposes a new numerical method for the full solution (all eigenvalues and all left and right eigenvectors) that, starting with a suitable linearization, 
uses an initial, structure preserving, reduction designed to reveal and deflate certain number of zero and infinite eigenvalues before the final linearization is forwarded to the QZ algorithm. The backward error in the reduction phase is bounded column wise in each coefficient matrix, which is advantageous if the coefficient matrices are graded.}
Numerical examples 
show that the proposed algorithm is capable of computing the eigenpairs with small residuals, and that it is competitive with the available state of the art  methods.
\end{abstract}
%

%
%


\section{Introduction and preliminaries}
\noindent We propose a new method for  numerical solution of the \emph{quartic eigenvalue problem}
\begin{equation}\label{eq:QuarticEP}
(\lambda^4 A + \lambda^3 B + \lambda^2C + \lambda D + E)x = \0,
\end{equation}
where the coefficient matrices  $A,  B,  C, D, E\in \mathbb{C}^{n\times n}$ are assumed general, with no particular structure (such as symmetry, sparsity). We are interested in the full solution, i.e. computation of all eigenvalues with the corresponding (left and/or right) eigenvectors, and our ultimate goal is to provide a robust mathematical software that can be used in ever increasing number of applications in applied sciences and engineering.

The quartic eigenvalue problem naturally arises in solving the Orr -- Sommerfeld equation which appears in the hydrodynamic analysis of the stability of the {Poiseuille} flow by eliminating the pressure from the linearized Navier-Stokes equation.  
Other applications include e.g. theoretical analysis and experimental design of  locally resonant phononic plates \cite{yong-phononic-plate-2014}, finite element analysis of two dimensional phononic crystals \cite{doi:10.1063/1.4819209}, modeling a robot  with electric motors in the joints \cite{mehrmann-watkins2002}, computing deformation modes of thin-walled structures \cite{VIEIRA2014575},  or e.g. computation of the guided and leaky modes of a planar waveguide \cite{Stowell-waveguide-2009}, or solving  an optical waveguiding problem involving atomically thick 2D materials \cite{SONG2020109871}.
In these examples the matrix eigenvalue problem is the result of discretization of differential operators and thus (depending on the discretization method) the coefficient matrices are sparse and usually only some eigenvalues are needed -- those may be prescribed by specifying e.g.  a region of interest in the complex plane.  In such cases, methods for large sparse problems such as e.g. NLFEAST \cite{Gavin_2018}, \cite{SAKURAI2003119}, \cite{Shinnosuke-Yokota2013}, \cite{Chen2017} will find a subspace that contains eigenvectors of interest, and then Rayleigh--Ritz extraction uses the projected problem in which the Rayleigh quotients are medium size dense matrices.
Reliable solution of the projected problem is important both for the convergence of the iterations towards the wanted part of the spectrum (e.g. for robust implementation of locking and purging)  and for the accuracy of the computed solution.

{These examples illustrate the wide spectrum of important applications of the quartic eigenvalue problem, and justify, even demand, development of methods specialized for (\ref{eq:QuarticEP})}. {Yet, to the best of our knowledge, there is no published  custom-built solver with a supporting analysis that would provide certain level of confidence/guarantee that  is comparable e.g. to the currently available solvers of the quadratic eigenvalue problem such as \cite{Hammarling:QUADEIG}, \cite{KVADeig-arxiv}.} Instead,  (\ref{eq:QuarticEP})  is usually numerically solved by a standard linearization and deployment of the solvers such as \texttt{polyeig} in Matlab.  
On the other hand, numerical difficulties in solving nonlinear eigenvalue problems become nontrivial even in the simplest case of the polynomial quadratic problem, which is at the core of the theory and applications of mechanical systems. 
More carefully designed custom-made algorithm often proves much better than a generic solver -- an excellent example is the quadratic eigenvalue problem, where  the algorithms \texttt{quadeig} \cite{Hammarling:QUADEIG} and \texttt{kvadeig} \cite{KVADeig-arxiv} outperform  \texttt{polyeig}, in particular when the spectrum contains multiple  infinite eigenvalues. 


\subsection{{Backward stability and scaling}}\label{SS=Scaling}

Numerical algorithms for computing the eigenvalues and the corresponding right and left eigenvectors of a general regular matrix polynomial $P_k(\lambda)=\sum_{i=0}^k \lambda^i A_i$ usually consist of three main steps: \emph{(i)} linearization, i.e. definition of an equivalent linear generalized eigenvalue problem for a suitably constructed $k n\times kn$ pencil $\mathcal{A}-\lambda\mathcal{B}$;
\emph{(ii)}  computation of the eigenpairs of the linearization $\mathcal{A}-\lambda\mathcal{B}$,  using e.g. the QZ algorithm; \emph{(iii)} reconstruction of the eigenpairs of the original problem. Some algorithms include a preprocessor that transforms the linearization \emph{(i)} into a form that reveals some canonical (e.g. spectral) structure and that is in some numerical sense better input to a particular software implementation of the QZ algorithm in \emph{(ii)} (e.g. scaling). For a general framework of this scheme we refer the reader to \cite{van1979computation},  \cite{van1983eigenstructure}.
\vspace{-1mm}
\subsubsection{Weak and strong norm-wise backward stability}\label{SSS=weak-strong-bs}
In order to be used with confidence in  applications, the eigenvalues and eigenvectors computed in finite precision arithmetic are justified by proving that they are exact spectral elements of a nearby polynomial 
\vspace{-1mm}
$$ \widetilde{P}_k(\lambda)=\sum_{i=0}^k \lambda^i (A_i + \Delta A_i).
\vspace{-1mm}
$$  
If the sizes of the  perturbations (backward errors) $\Delta A_i$ are appropriately small (e.g. of the same order of magnitude as initial uncertainties in the coefficients $A_i$, as measured in a matrix norm) then the computation is usually deemed backward stable. 

In a numerical algorithm, the canonical structure revealing steps 
\cite{van1979computation},  \cite{van1983eigenstructure} and the QZ algorithm are based on unitary transformations, so that the entire process is backward stable -- the computed result corresponds exactly to a linear pencil
$$
\mathcal{A}+\Delta\mathcal{A}-\lambda(\mathcal{B}+\Delta\mathcal{B}),\;\; \|\Delta\mathcal{A}\|_F\leq \xi \|\mathcal{A}\|_F,\; \|\Delta\mathcal{B}\|_F\leq \xi \|\mathcal{B}\|_F,\;\;0\leq \xi \ll 1.
$$
We refer to this as \emph{strong norm-wise backward stability} of the solution of the linearized problem.
However, this statement must be carefully interpreted. The QZ algorithm is oblivious to the underlying structure of the linear pencil, and $\mathcal{A}+\Delta\mathcal{A}-\lambda(\mathcal{B}+\Delta\mathcal{B})$ most likely will not have the structure of the linearization of a matrix polynomial, and the backward stability cannot be stated in terms of the original polynomial eigenproblem. 

Relating the pencil $\mathcal{A}+\Delta\mathcal{A}-\lambda(\mathcal{B}+\Delta\mathcal{B})$ with a matrix polynomial close to $P_k(\lambda)$ requires an additional theoretical construction in the error analysis.  For large classes of linearizations, there is an equivalence transformation 
$$
\mathcal{A}+\Delta\mathcal{A}-\lambda(\mathcal{B}+\Delta\mathcal{B}) \longrightarrow 
(I+E) [\mathcal{A}+\Delta\mathcal{A}-\lambda(\mathcal{B}+\Delta\mathcal{B})](I+F),\;\;\|E\|_F\leq\epsilon_1,\;\;\|F\|_F\leq \epsilon_2,
$$
such that the new pencil is the linearization of $\widetilde{P}_k(\lambda)=\sum_{i=0}^k \lambda^i (A_i + \Delta A_i)$, where, under certain assumptions, 
\begin{equation}\label{eq:backw-array-wise}
\|\begin{pmatrix} \Delta A_0 & \Delta A_1 & \ldots & \Delta A_k \end{pmatrix}\|_F \leq \epsilon_3 \|\begin{pmatrix} A_0 & A_1 & \ldots & A_k\end{pmatrix}\|_F, 
\end{equation}
with some $0\leq \epsilon_1, \epsilon_2, \epsilon_3 \ll 1$ that depend on the roundoff unit, dimensions of the problem and algorithmic details. This form of \emph{weak norm-wise backward stability}  bounds each $\|\Delta A_i\|_F$ relative to the norm of the coefficients array $\begin{pmatrix} A_0 & A_1 & \ldots & A_k\end{pmatrix}$.
For detailed  in depth discussion see \cite[\S 4]{van1983eigenstructure}, \cite{JOHANSSON20131062},  \cite{DMYTRYSHYN2017213}, \cite{geom-matrix-pol-2020}.

Note that (\ref{eq:backw-array-wise}) may be difficult to interpret in an application where the coefficient matrices carry information of different physical nature (e.g. mass, damping and stiffness) expressed in appropriate physical units.  Unfortunately, except in some particular cases, (\ref{eq:backw-array-wise}) cannot be  strengthened into \emph{strong norm-wise backward stability}\footnote{An anonymous referee suggested the term \emph{coefficient-wise backward stability}.} estimate
\begin{equation}\label{eq:backerr-strong}
 \| \Delta A_i\|_F \leq \epsilon \|A_i\|_F,\;\;i=0,\ldots, k.   
\end{equation}
Construction of an algorithm with the backward error (\ref{eq:backerr-strong}) may not be feasible without the framework of mixed error analysis, see \cite{Mastronardi-VanDooren-quad-back-err-ETNA} where this is shown for the case $n=1$, $k=2$. Constructing an algorithm that has guaranteed (provable)  strong norm-wise backward/mixed stability is a challenging open problem.

\subsubsection{Backward stability of individual eigenpairs. Residual}\label{SSS=residuals}
Another way to measure the quality of an approximate eigenpair (an eigenvalue with a corresponding right or left eigenvector) \emph{a posteriori} is through the residual, which we discuss next, in terms of our original problem (\ref{eq:QuarticEP}). If $(\lambda, x)$ is a computed eigenpair with the right eigenvector\footnote{For the sake of brevity, here we omit an analogous discussion with the left eigenvector.} $x$, then the 
minimal size of backward error of the type (\ref{eq:backerr-strong}) that makes $(\lambda, x)$ an exact eigenpair {of a backward perturbed quartic eigenvalue problem}, i.e. 
\begin{multline*}
\min\{ \epsilon  : 
(\lambda^4 (A+\Delta A)+\lambda^3 (B+\Delta B)+\lambda^2(C+\Delta C)+\lambda(D+\Delta D)+(E+\Delta E))x=\0, \\
\|\Delta A\|_F\! \leq \epsilon\|A\|_F,  \|\Delta B\|_F\!\leq\epsilon \|B\|_F,
\|\Delta C\|_F\!\leq\epsilon\|C\|_F,
\|\Delta D\|_F\!\leq\epsilon\|D\|_F, \|\Delta E\|_F\!\leq\epsilon\|E\|_F
\}\!,
\end{multline*}
can be explicitly computed as the normalized residual \cite[\S 2.2]{tisseur2000backward}
\begin{align}\label{eq:back-err-0}
\begin{split}
\eta({\lambda}, x)&=\frac{\| (\lambda^4 A + \lambda^3 B + \lambda^2 C + \lambda D + E )x\|_2}{(|\lambda|^4 \|A\|_2+|\lambda|^3\|B\|_2 + |\lambda|^2 \|C\|_2 + |\lambda| \|D\|_2 + \|E\|_2)\|x\|_2}, \; \lambda\neq \infty;\\
\eta(\infty,x)& = \frac{\|Ax\|_2}{\|A\|_2\|x\|_2}.
\end{split}
\end{align}
The key difficulty is that an eigenpair obtained by this procedure may have large residual (norm-wise backward error (\ref{eq:back-err-0})), although the norm-wise backward error for the eigenpair (with the same $\lambda$) of the corresponding linearization\footnote{The norm-wise backward error of $\lambda$ as an approximate eigenvalue of the linearization is computed analogously to (\ref{eq:back-err-0}).} is acceptably small.
This phenomenon is further analyzed in \cite{higham2007backward}, and it is proven that this kind of variation in the backward errors is due to the fact that the norms of the coefficient matrices of the original problem are not equilibrated. As a result, the backward stability of the linearization is not inherited by the transformation to the original polynomial. The problem can be alleviated by parameter scaling, which we review next.
%
%
\subsubsection{Parameter scaling}\label{SSS=param-scaling}
Parameter scaling is a useful, powerful, albeit not omnipotent,  tool for stabilizing polynomial eigensolvers; the techniques vary from simple heuristics to sophisticated concepts from tropical polynomial algebra. Here we briefly review the scaling used in this paper; other scalings can be easily incorporated.
  
 Write $\lambda = \gamma \nu$, where $\gamma>0$ is parameter to be determined. Using an additional free parameter $\theta >0$, define the scaled quartic polynomial as
 \vspace{-1mm}
$$
\theta E + \nu (\gamma \theta D) + \nu^2 (\gamma^2\theta C) + \nu^3 (\gamma^3\theta B) + \nu^4 (\gamma^4\theta A) \equiv \widehat{E} + \nu \widehat{D} + \nu^2\widehat{C} + \nu^3\widehat{B} + \nu^4\widehat{A}.
$$
%
%
%
The parameters $\gamma$ and $\theta$ are defined so that the norms of the new coefficient matrices do not vary much, and are close to a traget value. This can be done by adapting the Fan, Lin and Van Dooren's scaling \cite{fan2004normwise}.
 For {$\gamma$}, we choose
${\gamma} = \sqrt[4]{\frac{\|E\|_F}{\|A\|_F}}$,
which is the optimal $\gamma$ for minimizing the factor
${\max (1,\|\widehat{A}\|_F,\|\widehat{B}\|_F,\|\widehat{C}\|_F, \|\widehat{D}\|_F,\|\widehat{E}\|_F)^2}/{\min (\|\widehat{E}\|_F,\|\widehat{A}\|_F)}$
in the backward error ratio bounds  \cite{betcke2008optimal}, and for $\theta$, we choose, following \cite{campos2016parallel}, 
${\theta} = {4}/{(\|E\|_F + \gamma\| D\|_F + \gamma^2\| C\|_F +\gamma^{3}\|B\|_F)}.$

Although simple and easy to implement in any polynomial eigensolver, scaling can achieve good results in controlling the growth factor of the backward error. For instance, \cite{zeng-su-quadeig-bs} showed that the quadratic eigenvalue problem can be solved with small backward errors (\ref{eq:back-err-0}) in all eigenpairs by carefully examining  six linearizations. Detailed analysis showed that backward stability in all eigenpairs was  achieved using two linearizations.  

Note that in this interpretation of backward stability, the optimal backward errors (\ref{eq:back-err-0}) are constructed separately for each eigenpair, which is different from the backward stability discussed in \S \ref{SSS=weak-strong-bs}. 

\subsubsection{Graded matrices}\label{SSS=graded-m}
{We should keep in mind that, in addition to different magnitudes of the norms $\|A\|_F, \|B\|_F,\ldots$, $\|E\|_F$, the entries inside each coefficient matrix may be on different scales of magnitude, where some small entries may be important parameters. If, for instance, $A$ has  graded columns whose norms vary over several orders of magnitude, and $\Delta A$ is small perturbation that satisfies $\|\Delta A\|_F\leq \epsilon \|A\|_F$ with a small $\epsilon >0$, then some small columns of $A$ may be completely wiped out by $\Delta A$. (In an engineering application the columns of the coefficient matrices may be scaled to properly interpret the norm of the eigenvector $x$ whose components are of different physical nature.) Parameter scaling cannot counteract differently scaled columns in a particular coefficient matrix.
}

\vspace{-1mm}
\begin{remark}\label{REM=Balancing}
In addition to parameter scaling, we can use diagonal scaling matrices $\varDelta_{\ell}$ and $\varDelta_{r}$, for scaling all coefficients from the left with $\varDelta_{\ell}$ and from the right with $\varDelta_r$. {The goal is to equilibrate the absolute values of all matrix entries.} These scaling matrices can be computed by an extension of the scheme described in \cite[\S 4.2]{KVADeig-arxiv}.
\end{remark}

\vspace{-4mm}
\subsection{A quandary about the infinite eigenvalues}\label{SS=Quandary-infinite-eigs}

The presence of infinite eigenvalues, indicated by the rank deficiency of $A$,  may cause difficulties in the QZ algorithm, which is usually deployed for solving the linearized problem; infinite eigenvalues may not be identified correctly, they may have negative impact on the accuracy of the computed finite eigenvalues, see e.g. \cite[Example 2]{van1979computation}. It is then advantageous to remove infinite eigenvalues by a deflation and proceed with a problem of smaller dimension, with only finite eigenvalues. This framework, introduced in \cite{Hammarling:QUADEIG}, proved much better than direct solution of the linearized problem.

In some cases, certain number of infinite eigenvalues of (\ref{eq:QuarticEP}) can be identified and removed already during the problem formulation. An illustrative example
is given in the eigenvalue problem for the channel and Blasius boundary layer in semi-infinite domain \cite{cheb-orr-somm-1990}. The Orr--Sommerfel differential equation is  discretized using the Chebyshev collocation matrix method, and  the boundary conditions are imposed in $E$;  the remaining matrix coefficients $A$, $B$, $C$, $D$ have the corresponding last four rows equal to zero. In the case of linearly independent boundary conditions, by a clever column permutation, four infinite eigenvalues can be separated and deflated, see \cite{cheb-orr-somm-1990} for technical details. 

The structure of the infinite eigenvalue (the number and the dimensions of blocks in the Kronecker Canonical Form (KCF)) cannot be inferred by only inspecting the rank of $A$. Rank deficiency in $A$ reveals only certain number of infinite eigenvalues and further steps are necessary to either confirm that there are no more infinite eigenvalues or to reveal more  blocks   carrying $\lambda=\infty$ in the KCF. {For more details see \cite{van1979computation} and \cite{van1983eigenstructure}. }

Removals of infinite and zero eigenvalues  involve decisions on the numerical ranks of some intermediate matrices that have been contaminated by the roundoff noise from the previous steps. If the data is not well scaled, and if the computation cannot be interpreted as backward stable in terms of the original coefficients {that may have an initial uncertainty from the very problem formulation}, then there may be quite a few spurious eigenvalues with large absolute values.   The backward stability of the beginning steps that carry the critical responsibility of removing {as many as possible} infinite eigenvalues must be as much as possible in terms of the initial coefficient matrices, and it has to be as much as possible structured, e.g. column-wise small (backward error in each column small relative to that column's norm) instead of only small in matrix norm. 

{
As we pointed out in \cite{KVADeig-arxiv}, the goal of the pre-processing is to remove many zero and infinite eigenvalues before calling the QZ algorithm, and to use QZ software optimized for a given computing machinery. The reason is that handling infinities numerically in QZ is a delicate issue with many fine details \cite{watkins2000performance}, and the development of optimized software  often uses techniques, such as e.g. block-oriented formulations and parallelization, that accept speed-accuracy trade-offs.   \\
\indent Another possibility to deal with infinite eigenvalues of matrix polynomials, pursued  in \cite{VANBAREL2018186},  \cite{tisseur2020min}, is, after a suitable linearization and scaling, to modify software implementation of the QZ algorithm by lowering the original threshold  for setting small numbers to zero in the part of the algorithm that can create infinite eigenvalues. This change of a critical parameter was compensated by increasing the maximal allowed number of iterations.}  {This method performed well as measured by the residual of refined eigenvectors $\eta_P(\lambda)=\min_{x\neq \0}\eta(\lambda,x)$.\\ \indent Although the goal to safely deflate eigenvalues that may be difficult for QZ iterations is the same,  our approach of deflation, based as much as possible on initial data, is conceptually different, as we describe next.
}

{
\section{A new approach to the quartic eigenvalue problem}}
In our recent paper \cite{KVADeig-arxiv}, we built upon the \texttt{quadeig} algorithm of \cite{Hammarling:QUADEIG} and \cite{van1979computation},  \cite{van1983eigenstructure} and constructed an algorithm (designated as \texttt{kvadeig}) for the quadratic eigenvalue problem that makes several reduction steps toward the KCF. One of distinctive features of that reduction is that the backward error in the coefficient matrices is bounded on a finer-scale, e.g. column-wise. Although such a column-wise error bound does not extend to the entire algorithm (the QZ algorithm enjoys only the norm-wise bound), it may be of critical importance in the beginning steps when  decisions abut the zero and infinite eigenvalues have to be made, {in particular if the matrix coefficients are graded, as discussed in \S \ref{SSS=graded-m}.}  {Since this issue, tackled in  \texttt{kvadeig}, is separate from parameter scaling, the approach introduced in \texttt{kvadeig} can benefit from any good scaling, so that it can be combined e.g. with the strategy introduced in \cite{zeng-su-quadeig-bs}. }\\
\indent In this paper we extend the techniques of \texttt{quadeig} and \texttt{kvadeig} to the quartic eigenvalue problem (\ref{eq:QuarticEP}). {A direct connection of (\ref{eq:QuarticEP}) with the quadratic problem is quadratification. In \S \ref{SS=quadratification}, we briefly review quadratification by companion forms of grade 2, and then we discuss practical advantages and shortcomings of this approach to the quartic eigenvalue problem. Then, in \S \ref{SS=contributions} we present the main idea of the paper -- a linearization based on the quadratification provides a two-level structure that allows for a generalization of the scheme used in \texttt{kvadeig}.}\\

\subsection{Quadratification}\label{SS=quadratification}
 Both \texttt{quadeig} and \texttt{kvadeig} outperform the general solvers such as e.g. \texttt{polyeig} from Matlab. In addition, \cite{zeng-su-quadeig-bs}  provides a backward stable algorithm (in the sense of residuals reviewed in \S \ref{SSS=residuals}, albeit at double cost) whose semi-tropical scaling can be used in \texttt{quadeig/kvadeig} as well. {Hence, good quadratic solvers supported by numerical analysis are available.}
If one has these quadratic solvers implemented in a reliable software, then it makes sense to use \textit{quadratification} \cite{de2014spectral} to  reduce the quartic problem to a quadratic one and use the off the shelf quadratic code. 

To that end, define matrix polynomials
$B_1(\lambda) = \lambda^2C + \lambda D + E,\;\;
B_2(\lambda) = \lambda^2 A + \lambda B$.
The first  and the second companion form of grade $2$ are then defined, respectively,  as
\begin{align*}
C^2_1(\lambda) \!=\!\! \begin{pmatrix}
B_2(\lambda) & B_1(\lambda) \\
-\mathbb{I}_n & \lambda^2 \mathbb{I}_n
\end{pmatrix} \!\!=\!\! \begin{pmatrix}
\lambda^2 A + \lambda B & \lambda^2C + \lambda D + E\\
-\mathbb{I}_n & \lambda^2 \mathbb{I}_n
\end{pmatrix}\! 
\!= \!\lambda^2 \!\begin{pmatrix}
A & C\\
\mathbf{0} & \mathbb{I}_n
\end{pmatrix}\!\! +\! \lambda \!\begin{pmatrix}
B & D\\
\mathbf{0} & \mathbf{0}
\end{pmatrix} \!\!+\! \begin{pmatrix}
\mathbf{0} & E\\
-\mathbb{I}_n & \mathbf{0}
\end{pmatrix}\!.
\end{align*}
\begin{equation}\label{eq:SecondComapnionFormGrade2}
C^2_2(\lambda) = \begin{pmatrix}
B_2(\lambda) & -\mathbb{I}_n \\
B_1(\lambda) & \lambda^2 \mathbb{I}_n
\end{pmatrix} 
= \lambda^2\begin{pmatrix}
A & \mathbf{0}\\
C & \mathbb{I}_n
\end{pmatrix} + \lambda \begin{pmatrix}
B & \mathbf{0}\\
D & \mathbf{0}
\end{pmatrix} + \begin{pmatrix}
\mathbf{0} & -\mathbb{I}_n\\
E & \mathbf{0}
\end{pmatrix} 
= \lambda^2 \mathbb{M} + \lambda \mathbb{C} + \mathbb{K}.
\end{equation}
Both $C^2_1(\lambda)$ and $C^2_2(\lambda)$ are strong quadratifications,  
see  [Theorem 5.3, Theorem 5.4]\cite{de2014spectral},  {\cite{DETERAN2016344}.}

\subsubsection{Why quadratification alone is not enough?}\label{SSS=quadr-not-enough}
While solving the eigenvalue problem for (\ref{eq:SecondComapnionFormGrade2}) by a reliable quadratic eigensolver is a viable approach, its straightforward implementation has a significant limitation in light of the discussions in \S \ref{SS=Scaling} and \S \ref  {SS=Quandary-infinite-eigs}. Namely, just as the QZ algorithm is oblivious to the structure of the linearization, the quadratic eigensolver will be oblivious to the fact that the quadratic pencil represents a quadratification of the quartic problem and that the matrices $\mathbb{M}$, $\mathbb{C}$, $\mathbb{K}$ in (\ref{eq:SecondComapnionFormGrade2}) have block structure defined by the matrices of the original problem. As a result, in the preprocessing phase the  algorithm will not make the critical decisions (such as numerical rank revealing) based on the original coefficients of the quartic problem (\ref{eq:QuarticEP}). 

On the other hand, with a suitable choice of the quadratification and its linearization, we can generalize the framework of \texttt{kvadeig} by zooming into the block structure of the matrices constructed in the quadratification, and thus work with the original coefficients. This is the key idea in this work, and in the rest of this section we set the scene and present the structure of the paper. 
We will use the second companion form of grade $2$  because its structure  is compatible with the deflation scheme of \texttt{kvadeig}.

\subsection{A generalization of the  deflation scheme from \texttt{KVADEIG}}\label{SS=contributions}
The starting point of the development of the proposed algorithm is the quadratic polynomial (\ref{eq:SecondComapnionFormGrade2}). 
It 
can be further linearized using e.g. the second companion form. In that case, the final matrix pencil of size $4n\times 4n$, that represents a linearization of the quartic problem \ref{eq:QuarticEP}, reads
\vspace{-1mm}
\begin{equation}\label{eq:FinalLinearization}
\mathbb{A} - \lambda \mathbb{B} \!=\!\begin{pmatrix}
\Cbb & -\Id_{2n}\\
\Kbb & \0_{2n}
\end{pmatrix} - \lambda \!\begin{pmatrix}
-\Mbb & \0_{2n} \\
\0_{2n} & -\Id_{2n}
\end{pmatrix} \!=\!\left(\!\begin{array}{c | c ? c | c}
B & \0_n & -\Id_n & \0_n \\\hline
D & \0_n & \0_n & -\Id_n \\ \thickhline
\0_n & -\Id_n & \0_n & \0_n \\ \hline
E & \0_n & \0_n & \0_n 
\end{array}\!\right)- \lambda \!\left(\!\begin{array}{c | c ? c | c}
-A & \0_n & \0_n & \0_n\\ \hline
-C & -\Id_n & \0_n & \0_n\\ \thickhline
\0_n & \0_n & -\Id_n & \0_n \\ \hline
\0_n & \0_n & \0_n & -\Id_n
\end{array}\!\right)\!\!.
\end{equation}
Notice that \eqref{eq:FinalLinearization} is actually a block Kronecker linearization up to simultaneous interchanges of block rows 2 and 3, and block columns 2 and 3;  hence backward error analysis of type (\ref{eq:backw-array-wise}) applies, see \cite[Theorem 5.22, Corollary 5.24]{dopico2018block}. 
However, we find the interpretation via quadratification \eqref{eq:SecondComapnionFormGrade2} more intuitive and more natural for generalization of the scheme from \cite{KVADeig-arxiv}.

Now, we can follow the structure of \texttt{quadeig}/\texttt{kvadeig}, attempting to deflate the infinite eigenvalues of $\lambda^2 \Mbb + \lambda \Cbb + \Kbb$. Even if that is expected to perform better than a straightforward companion type linearization followed by \texttt{polyeig}, it is not the best one can do, see \S \ref{SSS=quadr-not-enough}. Instead, the goal is to implement the beginning critical steps, including deflations,  with small (hopefully to some extent structured) backward error in the original coefficient matrices $A$, $B$, $C$, $D$, $E$.  Therefore, on the global level, we follow the strategy of \texttt{kvadeig} \cite[Algorithm 3.1]{KVADeig-arxiv} to bring (\ref{eq:FinalLinearization}) to an upper triangular Kronecker Canonical Form (KCF), but the elementary steps are rewritten in terms of the original matrices whenever feasible. We will keep the two-level partitioning throughout the paper, and try to use transformations that respect the block structure as much a possible and,  if deflation is needed, the structure of the linearization should be considered when defining the transformation matrices.

\subsubsection{Outline of the paper} The new algorithm, designated as \texttt{kvarteig}, is described in detail in \S \ref{S=ALG}. 
{Recovering the eigenvectors of (\ref{eq:QuarticEP}) from those of (\ref{eq:FinalLinearization}) is discussed in detail in \S \ref{S=Vectors}, where we propose using least squares regularization, and suggest algorithmic details for an efficient software implementation.}
In \S \ref{S=BackError} we provide detailed backward error analysis of the first two steps that are critical for removing zero and infinite eigenvalues. We clearly identify moments in the algorithm where scaling of the data plays the key role in keeping the backward error in the initial data small. 
The numerical experiments, presented in \S \ref{S=KVATREIG-NUMEX}, show the advantage of the new framework, both of \texttt{kvarteig} and \texttt{kvadeig} (applied to the quadratification).
{The strong point of the proposed algorithm is the deflation process. The biggest differences in the results, as compared to other algorithms, can be seen in the element-wise backward errors and, in particular, in the examples with zero and/or infinite eigenvalues. 
Altogether, the numerical examples clearly demonstrate the importance of both  scaling (including balancing) and deflation in the pre-processing phase.}

The material of this paper should be considered as the second part of \cite{KVADeig-arxiv}, and numerical results in \S \ref{S=KVATREIG-NUMEX} once more illustrate the power of the approach introduced in \texttt{quadeig} and \texttt{kvadeig}.

\section{The \texttt{kvarteig} algorithm}\label{S=ALG}
{
We now describe the main ideas of the proposed procedure for deflating  infinite and/or zero eigenvalues.
As outlined in \S \ref{SS=contributions}, our plan is to adapt the deflation scheme from \texttt{quadeig}/\texttt{kvadeig} to the 
linearization (\ref{eq:FinalLinearization}).  Although the structure of the proposed algorithm is inspired by our recent quadratic eigensolver and its connection to the problem (\ref{eq:QuarticEP}) via quadratification, we stress again that the new algorithm is not a composition of quadratification and quadratic eigensolver.  We recall our discussion in \S \ref{SSS=quadr-not-enough} that applying a robust quadratic solver to the quadratification blindly (i.e. by ignoring the origin of the quadratic problem) is not satisfactory.  

The first immediate problem is revealing the numerical rank of the coefficient matrices. In \S \ref{SS=num-rank-block}, we briefly review this issue and use it to illustrate the two-level approach to the linearization (\ref{eq:FinalLinearization}) -- at the level of the $2\times 2$ partition, the algorithm mimics the structure of \texttt{kvadeig}, but all operations are adapted  to the $4\times 4$ block structure of the linearization and then, in \S \ref{SS=decision-tree}, further tailored for the quartic problem.}  

\subsection{Numerical rank and block-structure}\label{SS=num-rank-block}

{Revealing infinite and zero eigenvalues in presence of perturbations is a delicate task because it depends on the numerical ranks \cite{Golub-Klema-Stewart-Numerical_rank} of 
matrices that are either initial coefficients (possibly polluted by noise) or intermediate results in finite precision computation. An additional difficulty is the underlying structure of the involved matrices, that should preferably be preserved in a backward stability interpretation of the computed results. This is one of the reasons why applying \texttt{quadeig/kvadeig} directly to a quadratification is numerically not optimal.}

Namely, applying a rank revealing decomposition (such as the SVD or the pivoted QR factorization) to $\Mbb$ would mean looking for a small perturbation $\delta\Mbb$ such that $\Mbb+\delta\Mbb$ has lower rank that cannot be further reduced by a small perturbation.  Such a construction does not respect the block structure of $\Mbb$, and better way is to think at this step in terms of the numerical rank with constrained perturbation. If $J$ denotes the first $n$ columns of $\Id_{2n}$ then the allowed perturbation might be $\delta\Mbb = J\delta A J^T$ with an $n\times n$ $\delta A$. Similarly, the numerical rank of $\Kbb$ will be determined under the constraint that only $\Kbb(n+1:2n,1:n)=E$ is allowed to change. For a systematic treatment of the general case using the generalized SVD, see \cite{Zha-triplets-RSVD}. 

{Since we have the natural block structure, we can formulate the rank revealing steps directly, in terms of the original coefficients. This defines the first step of the  procedure whose details are explained in \S \ref{SS=decision-tree}. } 

Let $r_A = \rank(A)$, $r_E = \rank(E)$ and let 
\begin{equation}\label{eq:RankRevealingQRAE}
A\Pi_A = Q_AR_A, \;\; R_A = \begin{pmatrix}
\widehat{R}_A\\
\mathbf{0}_{n-r_A,n}
\end{pmatrix},  \;\;
E\Pi_E = Q_ER_E, \;\; R_E = \begin{pmatrix}
\widehat{R}_E\\
\mathbf{0}_{n-r_E,n}
\end{pmatrix},
\end{equation}
be the rank revealing QR factorizations for $A$ and $E$, computed as in \cite{bus-gol-65}, \cite{drmac-bujanovic-2008}. 
%
Note that (\ref{eq:RankRevealingQRAE}) yields a structure preserving rank revealing decomposition of the matrix $\mathbb{M}=\left( \begin{smallmatrix} A & \0 \cr C & \Id_n \end{smallmatrix}\right)$ as
\begin{equation}\label{eq:MRRD}
	\mathbb{M}\Pi_M = Q_MR_M,\;\;Q_M = \left(\begin{array}{c|c}
	\mathbf{0} & Q_A \\ \hline
	\mathbb{I}_n & \mathbf{0}
	\end{array}\right),\;\; \Pi_M = \left(\begin{array}{c|c}
	\mathbf{0} & {\Pi_A} \\ \hline
	\mathbb{I}_n & \mathbf{0}
	\end{array}\right),\;\;R_M = \left(\begin{array}{c|c}
	\mathbb{I}_n & C{\Pi_A} \\ \hline
	\mathbf{0} & R_A
	\end{array}\right).
\end{equation}
{The truncation of $R_M$ is done by truncating $R_A$, and the truncation can be pushed back into a backward perturbation of $A$; see \cite[\S 2.1, \S 2.3]{KVADeig-arxiv}.}

Similarly, the rank revealing factorization of the matrix $\mathbb{K}=\left(\begin{smallmatrix} \0 & -\Id_n\cr E & \0\end{smallmatrix}\right)$ is
\begin{equation}\label{KRRD}
	\mathbb{K}\Pi_K = Q_KR_K,\;\;Q_K = \left(\begin{array}{c|c}
	\Id_n & \0 \\ \hline
	\0 & Q_E
	\end{array}\right), \;\; \Pi_K = \left(\begin{array}{c|c}
	\0 & \Pi_E \\ \hline
	\Id_n & \0
	\end{array}\right),\;\;R_K = \left(\begin{array}{c|c}
	-\mathbb{I}_n & \0 \\ \hline
	\mathbf{0} & R_E
	\end{array}\right).
\end{equation}
Notice that the permutation of the column blocks only ensures that the matrix $R_K$ is upper triangular. If this structure is not important for the process, we can skip the permutation step and just make the following transformation
\begin{equation}
\begin{pmatrix}
\Id_n & \0\\
\0 & Q^*_E
\end{pmatrix}\Kbb \begin{pmatrix}
\Pi_E &\0\\
\0 & \Id_n
\end{pmatrix} = \begin{pmatrix}
\0 & -\Id_n \\
 R_E & \0
\end{pmatrix}.
\end{equation}
\begin{remark}\label{REM-drop-off}
{To determine the numerical rank using the rank revealing QR factorization, we use the thresholding strategies as in \cite[\S 2.3.1]{KVADeig-arxiv}. For a softer thresholding we look for a drop-off of absolute values of two consecutive diagonal entries in the upper triangular form. {In general, determination of the numerical rank (thresholding strategy and thresholds for truncating the triangular factor) should take into account the size and the structure of the initial uncertainty in the data. Such an additional information is application specific.}
}
\end{remark}

\subsection{The decision tree of \texttt{kvarteig}}\label{SS=decision-tree}
The algorithm is designed to remove zero eigenvalues; the infinities are removed by switching to the reversed pencil.
{
Similarly as in\footnote{Here, some familiarity with the reduction/deflation in the \texttt{kvadeig} algorithm is helpful for understanding the details of \texttt{kvarteig}. } \cite{KVADeig-arxiv}, the deflation process is an adaptation of the algorithm by 
\cite{van1979computation}
for computing the structure of the eigenvalues $0$ and $\infty$. The first two steps are modified using the structure of the linearization  (\ref{eq:FinalLinearization}), and for possible additional steps the algorithm proceeds with the rank revealing QR factorizations and carefully implemented URV decompositions.}

As in the \texttt{kvadeig}, there are three main {cases}: both $A$ and $E$ regular; only one of $A$ and $E$ is singular; and both $A$ and $E$ are singular.

\subsubsection{\underline{Both matrices $A$ and $E$ regular}}
If both matrices $A$ and $E$ are regular, we can use the factorization (\ref{eq:MRRD}) to reduce the matrix $\mathbb{B}$ from (\ref{eq:FinalLinearization}) to upper triangular form, since this is already the first step of the QZ algorithm. 
\begin{eqnarray}
&& \begin{pmatrix} Q_M^* & \0 \cr \0 & \Id_{2n} \end{pmatrix} \left\{ \begin{pmatrix} \mathbb{C} & -\Id_{2n} \cr \mathbb{K} & \0\end{pmatrix} -\lambda \begin{pmatrix} -\mathbb{M} & \0 \cr \0 & -\Id_{2n} \end{pmatrix}\right\}\begin{pmatrix} \Pi_M & \0 \cr \0 & \Id_{2n} \end{pmatrix}\nonumber \\
&=& 
{\footnotesize\left(\begin{array}{c ? c} 
\begin{array}{c|c}
\0 & D{\Pi_A}\cr \hline
\0 & Q^*_AB{\Pi_A}
\end{array} & \begin{array}{c| c}
\mathbf{0} & -\Id_{n}\cr\hline 
-Q^*_A & \mathbf{0}
\end{array} \cr \thickhline
\begin{array}{c|c}
-\Id_n & \mathbf{0}\cr\hline
\mathbf{0} & E{\Pi_A}
\end{array} & \huge\mathbf{0}_{2n}
\end{array}\right) - \lambda \left(\begin{array}{c?c}
\begin{array}{c|c}
-\Id_n & -C {{\Pi_A}}\cr \hline
\0 & -R_A
\end{array} & \0_{2n} \cr \thickhline
\0_{2n} & -\Id_{2n}
\end{array}\right)}.\label{eq:ABRegularCase}
\end{eqnarray}
The rest of the computation depends on the QZ algorithm. Note that the special structure of the pencil (\ref{eq:ABRegularCase}) can be exploited for designing a more efficient Hessenberg-triangular decomposition. This is a separate issue that we will not tackle in this work.
\subsubsection{\underline{Only one matrix is singular}}\label{SSS=one-singular}
Assume first that $E$ is singular, $r_E<n$, and thus there are at least $n-r_E$ zero eigenvalues which can be deflated. If our setup is to remove only the block of zero eigenvalues that is revealed by the null space of $E$, then we can achieve that and, at the same time, transform the matrix $\mathbb{B}$ to upper triangular form by the equivalence transformation 
\begin{eqnarray}
&& \begin{pmatrix} Q_M^* & \0 \cr \0 & Q_K^* \end{pmatrix} \left\{ \begin{pmatrix} \mathbb{C} & -\Id_{2n} \cr \mathbb{K} & \0\end{pmatrix} -\lambda \begin{pmatrix} -\mathbb{M} & \0 \cr \0 & -{\Id_{2n}} \end{pmatrix}\right\}\begin{pmatrix} \Pi_M & \0 \cr \0 & Q_K \end{pmatrix} \nonumber\\
&=& 
{\small\left(\begin{array}{c?c}
\begin{array}{c|c}
\0 & D{\Pi_A}\cr \hline
\0 & Q^*_AB{\Pi_A}
\end{array} & \begin{array}{cc}
\0 & -Q_E\\
-Q^*_A & \0
\end{array} \cr \thickhline
\begin{array}{c|c}
-\Id_n & \0 \cr \hline
\0 & \widehat{R}_E {\Pi^*_E}{\Pi_A} \cr \0 & \0
\end{array} & \0_{2n}
\end{array}\right) - \lambda \left(\begin{array}{c?c}
\begin{array}{c|c}
-\Id_n & -C {\Pi_A}\cr \hline
\0 & -R_A
\end{array} & \0_{2n} \cr \thickhline
\0_{2n} & -\Id_{2n}
\end{array}\right)}.\label{eq:ABOneSingularOneJordanBlock}
\end{eqnarray}
The $n-r_E$ zero eigenvalues are now deflated implicitly by working with the leading $(3n+r_E)\times (3n+r_E)$ sub-pencil of (\ref{eq:ABOneSingularOneJordanBlock}).
If we want to check for the existence of further blocks corresponding to $\lambda=0$, then it is convenient to use the following transformation:
\begin{eqnarray}\label{eq:FirstTransformJordanTest}
&& \begin{pmatrix} Q_K^* & \0 \cr \0 & Q_K^* \end{pmatrix} \left\{ \begin{pmatrix} \mathbb{C} & -\Id_{2n} \cr \mathbb{K} & \0\end{pmatrix} -\lambda \begin{pmatrix} -\mathbb{M} & \0 \cr \0 & -\Id_{2n} \end{pmatrix}\right\}\begin{pmatrix} \Id_{2n} & \0 \cr \0 & Q_K \end{pmatrix}\nonumber \\
&=& 
{\left(\begin{array}{c?c}
\begin{array}{c|c}
B & \0\cr \hline
Q^*_ED & \0
\end{array} & \begin{array}{c}
-\Id_{2n}
\end{array} \cr \thickhline
\begin{array}{c|c}
0 & -\Id_n \cr \hline
\widehat{R}_E{\Pi^*_E} & \0 \cr
\0 & \0
\end{array} & \0_{2n}
\end{array}\right) - \lambda \left(\begin{array}{c?c}
\begin{array}{c|c}
-A & \0\cr \hline
-Q^*_EC & -Q^*_E
\end{array} & \0_{2n} \cr \thickhline
\0_{2n} & -\Id_{2n}
\end{array}\right)}.\label{eq:ABOneSingular}
\end{eqnarray}
The deflated pencil of order $3n+r_E$ reads
\begin{equation}\label{eq:EsingularTruncatedPencil}
	\mathbb{A}_{22} - \lambda \mathbb{B}_{22} = {\small\left(\begin{array}{c?c}
	\begin{array}{c|c}
	B & \0\cr \hline
	Q^*_{E,1}D & \0\\ \hline
	Q^*_{E,2}D & \0
	\end{array} & \begin{array}{c| c}
	-\Id_{n} & \\ \hline 
	 & -\Id_{r_E}\\ \hline
	\0 & \0
	\end{array} \cr \thickhline
	\begin{array}{c|c}
	\0 & -\Id_n \cr \hline
	\widehat{R}_E{\Pi^*_E} & \0 \cr
	\end{array} & \0_{n+r_E}
	\end{array}\right) - \lambda \left(\begin{array}{c?c}
	\begin{array}{c|c}
	-A & \0\cr \hline
	-Q^*_EC & -Q^*_E
	\end{array} & \0_{(2n)\times (n+r_E)} \cr \thickhline
	\0_{(n+r_E)\times(2n)} & -\Id_{n+r_E}
	\end{array}\right)},
\end{equation}
where $Q^*_{E,1}=Q^*_E(1:r_E,:)$ and $Q^*_{E,2} = Q^*_E(r_E+1:n,:)$. Note that 
$\mathbb{A}_{22} - \lambda \mathbb{B}_{22}$ is the block at the position $(1,1)$ of a block-upper triangular pencil (\ref{eq:FirstTransformJordanTest}); the block position $(2,2)$ corresponds to the deflated $n-r_E$ zeros.
Denote the left and the right transformation matrices from (\ref{eq:FirstTransformJordanTest}) with $\mathbf{P}_1$ and $\mathbf{Q}_1$ respectively, and the linearization pencil with $\mathbb{A} - \lambda \mathbb{B} = \mathbb{A}_{11} - \lambda \mathbb{B}_{11}$. After the first deflation step we have\footnote{See \cite[\S 5.2]{KVADeig-arxiv} for more details.}
\begin{equation}\label{eq:pik}
\mathbf{P}_1(\mathbb{A}_{11} - \lambda \mathbb{B}_{11})\mathbf{Q}_1 = \begin{pmatrix}
\mathbb{A}_{22} - \lambda \mathbb{B}_{22} & \spadesuit\\
\0 & -\lambda \breve{\mathbb{B}}_{11}
\end{pmatrix},\;\; \breve{\mathbb{B}}_{11}=-\Id_{n-r_E}.
\end{equation}
The next step in the deflation process is to determine the rank of the matrix $\mathbb{A}_{22}$. From the structure of the matrix, we conclude that the rank of $\mathbb{A}_{22}$ is equal to $2n+r_E+$ \textit{"the rank of the $n\times n$ matrix }
$	\left(\begin{smallmatrix}
		Q^*_{E,2}D\\ \hline
		\widehat{R}_E{\Pi^*_E}
	\end{smallmatrix}\right)$",
which is defined in  terms of the coefficient matrices $D$ and $E$ of the original problem.
So, we compute the rank revealing factorization
\begin{equation}\label{eq:TestJordanZero}
	\left(\begin{array}{c}
		Q^*_{E,2}D\\ \hline
		\widehat{R}_E{\Pi^*_E}
	\end{array}\right)\Pi_{A_{22}} = Q_{A_{22}}R_{A_{22}}.
\end{equation}
If (\ref{eq:TestJordanZero}) is of full rank $n$, then $\mathbb{A}_{22}$ is regular, there are no more zeros in the spectrum, and the single deflation step is done by removing the trailing $n-r_E$ rows and columns in (\ref{eq:ABOneSingularOneJordanBlock}).
If, on the other hand, (\ref{eq:TestJordanZero}) is rank deficient with $\mathrm{rank}(R_{A_{22}})=r_2<n$, the corresponding number of $n-r_2$ zero eigenvalues can be deflated. To that end, note that $R_{A_{22}}=\left(\begin{smallmatrix} \widehat{R}_{A_{22}}\cr \0_{n-r_2,n}\end{smallmatrix}\right)$ and transform the pencil (\ref{eq:EsingularTruncatedPencil}) to get zero rows at the bottom of $\mathbb{A}_{22}$. 
%
This is done by the permutation 
$\pi = \begin{pmatrix}
1:n+r_E, & 2n+1:3n, & n+r_E+1:2n, & 3n+1:3n+r_E
\end{pmatrix}.$
If $\Pi$ is the corresponding row permutation matrix, and if we set 
\begin{equation}\label{eq:hatP2}
	\widehat{P}_2 = \begin{pmatrix}
	\Id_{2n+r_E} & \\
	 & Q^*_{A_{22}}
	\end{pmatrix}\Pi,
\end{equation}
then the transformed pencil is
\begin{equation}\label{eq:ABOneSingularSecondStep}
	\widehat{P}_2\mathbb{A}_{22} =\! \left(\begin{array}{c?c}
	\begin{array}{c|c}
	B & \0 \cr \hline
	Q^*_{E,1}D & \0 \\
	\0 & -\Id_n
	\end{array} & \begin{array}{c| c}
	-\Id_n & \cr \hline
	& -\Id_{r_E} \\
	\0 & \0
	\end{array} \\ \thickhline
	\begin{array}{c | c}
	\widehat{R}_{A_{22}}\Pi^T_{A_{22}} & \0 \cr 
	\0 & \0
	\end{array} & \0_{n\times (n+r_E)}
	\end{array}\right)\! , \;
	\widehat{P}_2\mathbb{B}_{22} = \!
\left(\begin{array}{c?c}
\begin{array}{c|c}
-A & \0\\ \hline
\begin{array}{c}- {Q}_{E,1}^*C\cr
\0_n \end{array} & \begin{array}{c} -{Q}_{E,1}^* \cr \0_n \end{array}
\end{array} & \begin{array}{c} \0_{n+{r}_E} \cr\hline \0_{{r}_E\times (n+{r}_E)} \cr\hline \begin{array}{c|c}-\Id_n & \0_{n\times {r}_E} \end{array}\end{array} \\ \thickhline
\begin{array}{c|c} -{N}_{[1]} & -{N}_{[2]}\end{array} & \begin{array}{c|c} {N}_{[3]} & {N}_{[4]}\end{array}
\end{array}\right)\! .	
%
\end{equation}
To deflate the additional $n-r_2$ zeros, {we reduce the trailing $n-r_2$ rows of the blocks $-{N}_{[1]}$, $-{N}_{[2]}$ and ${N}_{[3]}$ to zero}. This is done by the complete orthogonal decomposition
\begin{equation}\label{eq:URV-B22}
	(\widehat{P}_2\mathbb{B}_{22})(2n+r_E+r_2+1:3n+r_E,:) 
	= U_{BB}R_{BB}V^*_{BB},
\end{equation}
so that $(\widehat{P}_2\mathbb{B}_{22})(2n+r_E+r_2+1:3n+r_E,:)V_{BB}\! =\! \begin{pmatrix}
\0 & \breve{\mathbb{B}}_{22}
\end{pmatrix}$. Finally, the deflated pencil is
\begin{equation}\label{eq-penicil-AB33}
	\widehat{P}_2\mathbb{A}_{22}V_{BB} - \lambda \widehat{P}_2\mathbb{B}_{22}V_{BB} = \begin{pmatrix}
	\mathbb{A}_{33} - \lambda \mathbb{B}_{33} & \blacksquare \\	
	\0 & -\lambda\breve{\mathbb{B}}_{22}
	\end{pmatrix}.
\end{equation}
This reduction process continues by forwarding $\mathbb{A}_{33} - \lambda \mathbb{B}_{33}$ to the next step of reduction toward an upper triangular KCF, as described in \cite{KVADeig-arxiv}.

\begin{remark}
For a more structured backward error in case of graded matrices, the complete orthogonal (URV) decomposition (\ref{eq:URV-B22}) should be computed as in \cite[\S 2.2]{KVADeig-arxiv}.
\end{remark}

\begin{remark}
	If the matrix $A$ is rank deficient, and $E$ is full rank, we process the reversed problem $(\mu^4E + \mu^3D + \mu^2C + \mu B + A)x = \0$, $\mu=1/\lambda$, and the corresponding truncated linearization pencil of order $3n+r_A$ reads
	\begin{equation}
	\mathbb{A}_{22} - \lambda \mathbb{B}_{22} = \left(\begin{array}{c?c}
	\begin{array}{c|c}
	D & \0\cr \hline
	Q^*_{A,1}B & \0\\ \hline
	Q^*_{A,2}B & \0
	\end{array} & \begin{array}{c| c}
	-\Id_{n} & \\ \hline 
	& -\Id_{r_A}\\ \hline
	\0 & \0
	\end{array} \cr \thickhline
	\begin{array}{c|c}
	\0 & -\Id_n \cr \hline
	\widehat{R}_AP^*_A & \0 \cr
	\end{array} & \0_{n+r_A}
	\end{array}\right) - \lambda \left(\begin{array}{c?c}
	\begin{array}{c|c}
	-E & \0\cr \hline
	-Q^*_AC & -Q^*_A
	\end{array} & \0_{(2n)\times (n+r_A)} \cr \thickhline
	\0_{(n+r_A)\times(2n)} & -\Id_{n+r_A}
	\end{array}\right),
	\end{equation}
	and the rank of matrix $\mathbb{A}_{22}$ is now $2n+r_A+$ \textit{the rank of the $n\times n$ matrix}
$\left(\begin{smallmatrix}
	Q^*_{A,2}B\\ \hline
	\widehat{R}_AP^*_A
	\end{smallmatrix}\right)$.
\end{remark}

\subsubsection{\underline{Both matrices $A$ and $E$ are singular}}\label{SSS=both-singular}
When both matrices $A$ and $E$ are rank deficient, then, following the discussion from \S \ref{SSS=one-singular},  the key information is in the numerical ranks of the matrices

	\begin{equation}\label{eq:QuarticJordanCheck}
\Phi =	\left(\begin{array}{c}
	Q^*_{A,2}B\\ \hline
	\widehat{R}_A{\Pi^*_A}
	\end{array}\right),\;\;\Psi = \left(\begin{array}{c}
	Q^*_{E,2}D\\ \hline
	\widehat{R}_E{\Pi^*_E}
	\end{array}\right).
	\end{equation}
	
\paragraph{\underline{Both $\Phi$ and $\Psi$ are full rank}}
	In this case, in the KCF the zero and the infinite eigenvalue occupy single block each, induced by the rank deficiency of $E$ and $A$. The deflation process starts by creating $n-r_E$ and $n-r_A$ zero rows in the coefficients of the corresponding linearization as follows:
\begin{eqnarray}
&& \begin{pmatrix} Q_{M}^* & \0 \cr \0 & Q_{K}^* \end{pmatrix} \left\{ \begin{pmatrix} \Cbb & -\Id_{2n} \cr \Kbb & \0\end{pmatrix} -\lambda \begin{pmatrix} -\Mbb & \0 \cr \0 & -\Id_{2n} \end{pmatrix}\right\}\begin{pmatrix} \Id_{2n} & \0 \cr \0 & Q_{K} \end{pmatrix}\nonumber \\
&=& \!\!\!\!
\left(\begin{array}{c?c}
\begin{array}{c | c}
\0_n & D\\ \hline
\0_{n\times r_A} & Q_A^*(1:r_A,:)B\\
\0_{n\times (n-r_A)} & Q_A^*(r_A+1:n,:)B
\end{array} & \begin{array}{c | c c}
\0_n & -Q_E(:,1\!:\! r_E) & -Q_E(:,r_E\! +\! 1\!:\! n)\\ \hline
-Q^*_A(1:r_A,:) & \0_{r_A\times r_E} & \0_{r_A\times (n-r_E)}\\
-Q^*_A(r_A\! +\! 1\! :\! n,:) & \0_{(n-r_A)\times r_E} & \0_{(n-r_A)\times(n-r_E)} \end{array}\cr \thickhline
\begin{array}{c|c}
-\Id_{n} & \0_n\\ \hline
\0_{r_E\times n} & \widehat{R}_E{\Pi^*_E} \\
\0_{(n-r_E)\times n} & \0_{(n-r_E)\times n}
\end{array} & \begin{array}{c | c c}
\0_{n} & \0_{n\times r_E} & \0_{n\times (n-r_E)}\\ \hline
\0_{r_E\times n} & \0_{r_E} & \0_{r_E\times (n-r_E)}\\
\0_{(n-r_E)\times n} & \0_{(n-r_E)\times r_E} & \0_{(n-r_E)}
\end{array}
\end{array}\right) \nonumber \\
&-\lambda& \left(\begin{array}{c?c}
\begin{array}{c | c}
-\Id_n & C\\ \hline
\0_{r_A\times n} & -\widehat{R}_A{\Pi^*_A}\\ 
\0_{(n-r_A)\times n} & \0_{(n-r_A)\times n}
\end{array} & \begin{array}{c|c c}
\0_n & \0_{n\times r_E} & \0_{n\times (n-r_E)}\\ \hline
\0_{r_A\times n} & \0_{r_A\times r_E} & \0_{r_A\times (n-r_E)}\\
\0_{(n-r_A)\times n} & \0_{(n-r_A)\times r_E} & \0_{(n-r_A)\times (n-r_E)}
\end{array} \\ \thickhline
\begin{array}{c|c}
\0_n & \0_n \\ \hline
\0_{r_E\times n} & \0_{(n-r_E)\times n}\\
\0_{(n-r_E)\times n} & \0_{(n-r_E)\times n}
\end{array} & \begin{array}{c | c c}
-\Id_n & \0_{n\times r_E} & \0_{n\times (n-r_E)}\\\hline
\0_{r_E\times n} & -\Id_{r_E} & \0_{r_E\times (n-r_E)}\\
\0_{(n-r_E)\times n} & \0_{(n-r_E)\times r_E} & -\Id_{n-r_E}
\end{array}
\end{array}\right).
\label{eq:ABBothSingular}
\end{eqnarray} 
The next step is to compute the complete orthogonal decomposition
\begin{equation}
	\left(\begin{array}{c c c}Q_A^*(r_A+1:n,:)B & Q_A^*(r_A+1:n,:) & \0_{(n-r_A)\times r_E}\end{array} \right) = Q_X\left( \begin{array}{c c}
	R_X & \0_{(n-r_A)\times (n+r_E+r_A)}
	\end{array} \right)Z_X ,
\end{equation}
and permute the first $(n-r_A)$ and the last $(n+r_E+r_A)$ columns to get
\begin{align*}
	&Q^*_X\left(\begin{array}{c c c}Q_A^*(r_A+1:n,:)B & Q_A^*(r_A+1:n,:) & \0_{(n-r_A)\times r_E}\end{array} \right)Z^*_X \left(\begin{array}{c c}
	\0 & \Id_{n-r_E}\\
	\Id_{n+r_A+r_E} & \0
	\end{array}\right)\\
	& = \left(\begin{array}{c c}
	\0_{(n-r_A)\times (n+r_E+r_A)} & R_X
	\end{array}\right).
\end{align*}
Finally, to complete the deflation process, the following left and right transformation matrices must be applied on the pencil \eqref{eq:ABBothSingular}:
\begin{equation*}
	\left(\begin{array}{c c c c c}
	\Id_{n+r_A} & \0 & \0 & \0 &\0\\\hline
	\0 & \0 & \0 & \Id_{r_E} & \0 \\
	\0 & \0 & \Id_n & \0 & \0\\
	\0 & Q^*_X &\0 &\0 &\0\\\hline
	\0 & \0 &\0 &\0 & \Id_{r_E}
	\end{array}\right),\;\;\; \left(\begin{array}{c |c}
	Z^*_X\left(\begin{array}{c c}
	\0 & \Id_{n-r_E}\\
	\Id_{n+r_A+r_E} & \0
	\end{array}\right) & \0 \\ \hline
	\0 & \Id_{2n-r_E}
	\end{array}\right).
\end{equation*}
After the transformation step, the deflation is finished by removing the last $2n-r_E-r_A$ rows and columns from the obtained pencil. The resulting pencil of dimension  $2n+r_A+r_E$ is forwarded to the QZ algorithm.

\paragraph{\underline{Only one matrix in (\ref{eq:QuarticJordanCheck}) is singular}}
	 This means that there are at least two KCF blocks for the zero (if $\Psi$ is singular) or the infinite (if $\Phi$ is singular) eigenvalue. In either case, we deflate two  blocks for the zero eigenvalue using the structure described in \S \ref{SSS=one-singular} ({see also \cite[\S 5.2, \S 6.1]{KVADeig-arxiv}}), meaning that the reversed problem is considered if there are more  blocks for the infinite eigenvalues. 
	 
	 After deflating two blocks of zero eigenvalues, we obtain the pencil (\ref{eq-penicil-AB33}). Now, the existence of additional zero eigenvalues depends on the rank of the matrix $\mathbb{A}_{22}$. To deflate possible additional zeros, the pencil $\mathbb{A}_{22} - \lambda \mathbb{B}_{22}$ is forwarded to the algorithm for computing  the KCF \cite[\S 3.2]{KVADeig-arxiv}. As the output we get the pencil $\mathbb{A}_{\ell+1,\ell+1}-\lambda \mathbb{B}_{\ell+1,\ell+1}$ and transformation matrices $Q_p$ and $P_p$, with $\mathbb{A}_{\ell+1,\ell+1}$ regular. Denote with $n_{\ell+1}$ the dimension of the resulting pencil.
	 
	 Finally, we have to deflate one block of infinite eigenvalues, which have been detected at the beginning. This is done by forwarding the reversed pencil $\mathbb{B}_{\ell+1,\ell+1} - \lambda \mathbb{A}_{\ell+1,\ell+1}$ to the procedure described in \cite[\S 3.2]{KVADeig-arxiv}. As the input to the algorithm we supply the information that there is only one block to be deflated, so that only one step of the algorithm is needed. In addition, we also send the number of infinite eigenvalues so that the rank determination of the matrix $\mathbb{B}_{\ell+1,\ell+1}$ is omitted. As an output, we get the pencil $\mathbb{A}_{\ell + \ell_1,\ell+\ell_1} - \lambda \mathbb{B}_{\ell + \ell_1,\ell+\ell_1}$ with both $\mathbb{A}_{\ell + \ell_1,\ell+\ell_1}$ and $\mathbb{B}_{\ell + \ell_1,\ell+\ell_1}$ regular, and the corresponding transformation matrices $P_{p1}$ and $Q_{p1}$.
	 The final transformation matrices $Q$ and $P$ are
	 
	 \begin{align*}
	 	Q &= \left(\begin{smallmatrix}
	 	\Id_{2n} & \0 \\
	 	\0 & Q_{\Kbb}
	 	\end{smallmatrix}\right)
	 	\left(\begin{smallmatrix} \Id_{2n} & \0 & \0 \cr \0 & V^*_{BB}P_{BB} & \0 \cr \0 & \0 &\Id_{n-r_E}\end{smallmatrix}\right)
	 	\left(\begin{smallmatrix} Q_p & \0 \cr \0 & \Id_{4n-r_E-r_2}\end{smallmatrix}\right)
	 	\left( \begin{smallmatrix} Q_{p1} & \0 \cr \0 & \Id_{4n-n_{\ell+1}}\end{smallmatrix}\right)\\
	 	P &= \left(\begin{smallmatrix} P_{p1} & \0 \cr \0 & \Id_{4n-n_{\ell+1}}\end{smallmatrix}\right)
	 	\left(\begin{smallmatrix} P_p & \0 \cr \Id_{4n-r_E-r_2} & \0\end{smallmatrix}\right)
	 	\left(\begin{smallmatrix} \Id_{n+r_E} & \0 & \0 \cr \0 & Q^*_{A_{22}} & \0 \cr \0 & \0 & \Id_{2n-r_E}\end{smallmatrix}\right) \left(\begin{smallmatrix}
	 	Q^*_{\Kbb} & \0\\
	 	\0 & Q^*_{\Kbb}
	 	\end{smallmatrix}\right).
	 \end{align*}

\paragraph{\underline{Both matrices in (\ref{eq:QuarticJordanCheck}) are singular}} 
This case is analogous to the previous one. The only difference is that, when we call the algorithm on the reversed pencil $B_{\ell+1,\ell+1} - \lambda A_{\ell+1,\ell+1}$, we provide additional information that there are least two steps of deflation ahead, as well as the dimensions of the first two blocks which were previously determined by the rank revealing decompositions of $A$ and $\Phi$.\\

{
\subsubsection{On making more reduction steps}
After all detected zero and/or infinite eigenvalues have been deflated, as described above,  we check the ranks of the matrices in the resulting pencil in order to determine whether there are more blocks of these eigenvalues. If these matrices are rank deficient, then another step of deflation must take place. Unfortunately, with that step, the structure of the linearization is lost, so we use the standard deflation process for generalized eigenvalue problem as in \cite{van1979computation} and \cite{KVADeig-arxiv}. It remains an interesting problem to determine an equivalence transformation to restore the structure for more steps, while working on an equivalent representation of the original problem.

There is, of course, a trade-off between this increased numerical robustness and computational cost (complexity), and, in a software implementation, the number of reduction/deflation steps will be limited.
But, even with these few steps we can make some critical decisions on the zero and infinite eigenvalues, with backward error in terms of the coefficients of the original problem; see \S \ref{S=BackError} and \S \ref{S=KVATREIG-NUMEX}.
}

\subsubsection{{An illustrative example}}\label{NUMEX-4} Let us illustrate the action of the additional reduction steps toward the KCF.  We use the \texttt{mirror} example from the NLEVP library \cite{betcke2010nlevp}; it originates  from the calibration of catadioptric vision system \cite{Zhang-vision-4608140}. The problem is of order $n=9$.
	
	Both $A$ and $E$ are rank deficient, with the rank $r_E = r_A = 2$, which means that there are at least $7$ zero {and $7$} infinite eigenvalues. They were correctly identified and deflated in the preprocessing in \texttt{quadeig}\footnote{For the purpose of testing and comparisons, we apply quadratic solvers to the quadratification  (\ref{eq:SecondComapnionFormGrade2}) of the quartic problem.}; in the next step, the QZ algorithm found an additional zero eigenvalue, and two more infinite eigenvalues. On the other hand, \texttt{polyeig}\footnote{We use \texttt{polyeig} from Matlab, version 7.11.0.584 (R2010b).} identified in total only $2$ zero  and $9$ infinite eigenvalues. This shows the advantage of the preprocessing introduced in \texttt{quadeig} for early revealing of zeros and infinities. {These  numbers of computed zero and infinite eigenvalues were independent of whether the parameter scaling was on or off before calling \texttt{quadeig} and \texttt{polyeig}.}
	
	On the other hand, the preprocessing in both \texttt{kvadeig} and \texttt{kvarteig} found additional {two zero and two infinite eigenvalues}, making the total of $9$ zero and $9$ infinite eigenvalues deflated before calling the QZ. {Again, the same numbers of $9$ zero and $9$ infinite eigenvalues were found with and without parameter scaling. This almost agrees with the result of \texttt{quadeig}, up to one zero eigenvalue. 
	
	Next, we check the norm-wise backward errors \eqref{eq:back-err-0} for all computed eigenpairs (for all four algorithms). The details of computing the eigenvectors in \texttt{kvarteig} are given in \S \ref{S=Vectors}.
	The computed residuals, shown in Figure \ref{fig:mirrorBE}, seem to indicate that all results are acceptable up to small norm-wise backward errors (separate for each eigenpair) 
	of the order of machine precision. (The eigenvalues are indexed in non-decreasing absolute values.)
	}
	
	
	\begin{figure}[ht]
		\centering
		\begin{minipage}{.5\textwidth}
			\includegraphics[width=1\textwidth]{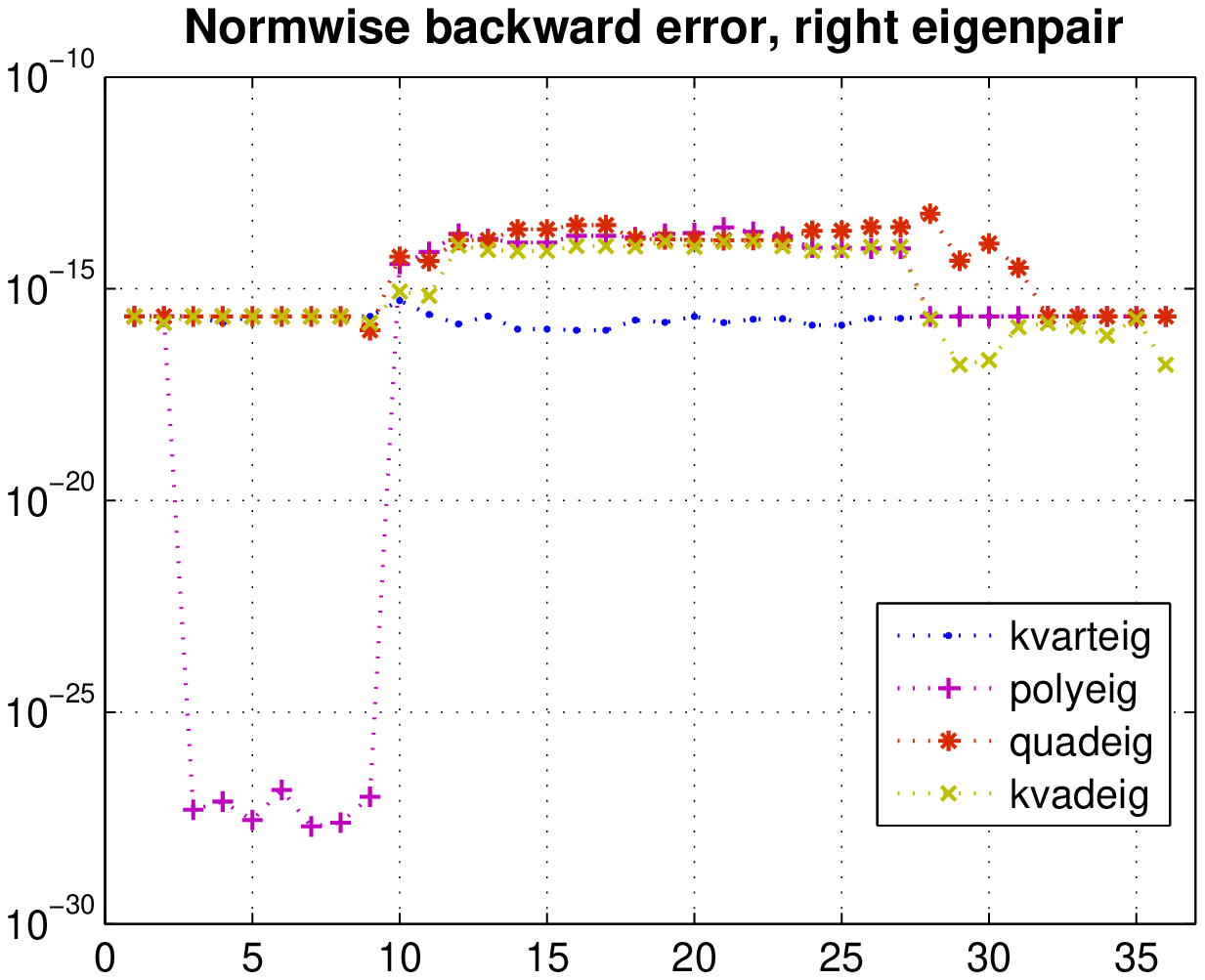}
			
		\end{minipage}%
		\begin{minipage}{.5\textwidth}
			\centering
			\includegraphics[width=1\textwidth]{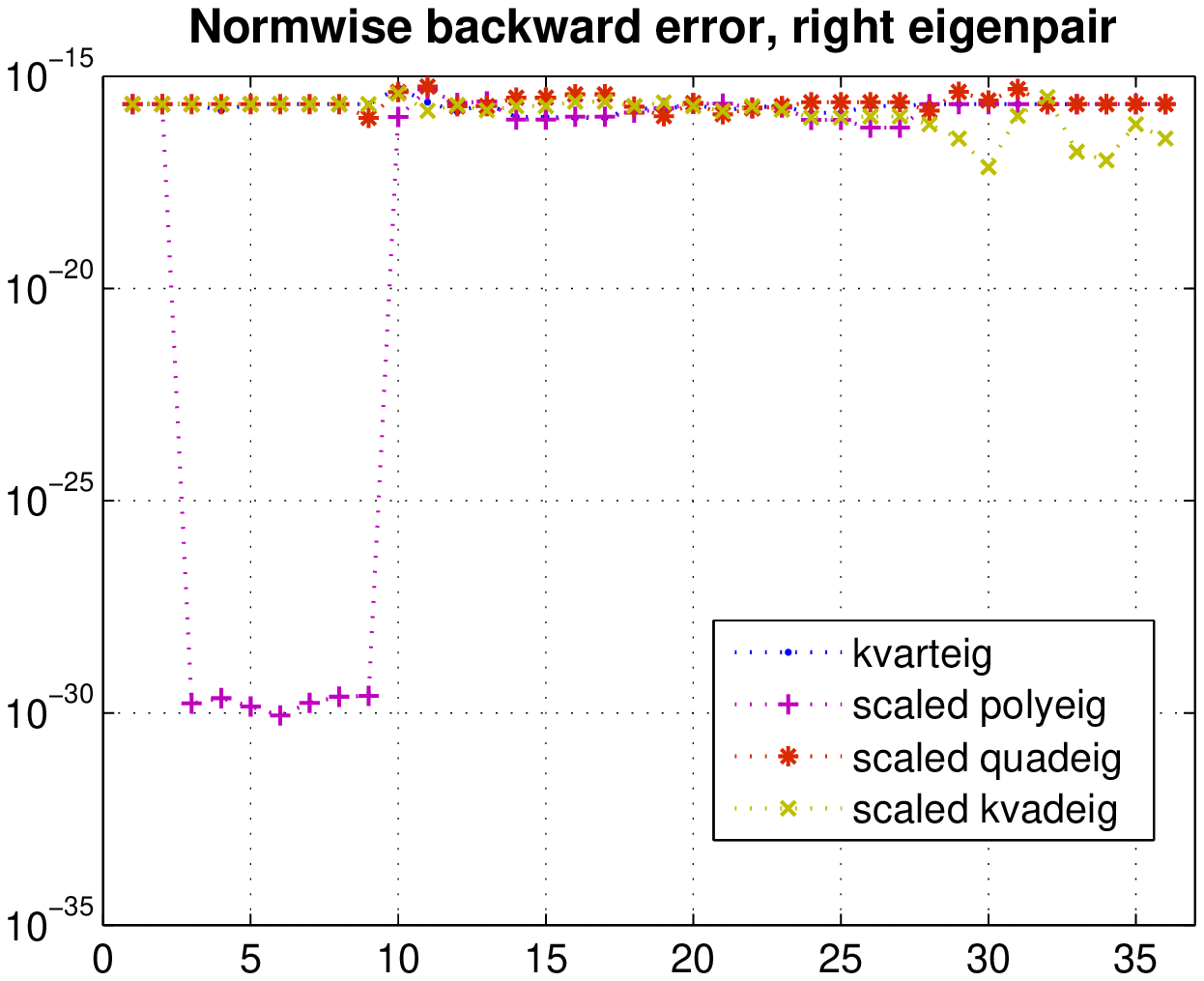}	
			
		\end{minipage}
		\caption{Norm-wise backward errors for all eigenvalues with the corresponding right eigenvectors in the \texttt{mirror} NLEVP benchmark example. \emph{Left panel:} No parameter scaling in \texttt{polyeig} and the quadratic solvers. \emph{Right panel:} The coefficients of the problem are scaled as described in \S \ref{SSS=param-scaling}.}
		\label{fig:mirrorBE}
	\end{figure}
	

{Hence, with all backward errors at the level of the round-off, and with different numbers of zero and infinite eigenvalues computed by different algorithms, how can we tell which one is correct? What assurance is given in a particular algorithm concerning the existence of infinite eigenvalues of a perturbed matrix polynomial in a vicinity of the given one? The difficulty is best illustrated in \cite[Example 2]{van1979computation}, which actually contains the key idea pursued in \texttt{quadeig}, \texttt{kvadeig} and \texttt{kvarteig}.}

{
If we look at the structure of the matrices $A$ and $E$ for this particular problem, we see that their ranks can be determined exactly because each has  $7$ zero columns, and the independence of the remaining columns is easy to check. Further, the block matrices (\ref{eq:QuarticJordanCheck}), which are used to determine the existence of more than one block for zero and infinite eigenvalues, also have two zero columns each, and the remaining $9\times 7$ submatrices are well conditioned. Thus we can argue that \texttt{kvarteig} has determined the correct numbers of zero and infinite eigenvalues.\\
\indent {We call the reader to revisit this example after reading Example \ref{EX-mirror-transposed}.}
}

\section{Computing the eigenvectors}\label{S=Vectors}
In the computation of the eigenvectors, we have two main computational tasks: \emph{(i)} restore the eigenvectors of the quartic problem from the eigenvectors of its \emph{linearization} (\S \ref{SS:quart-vectors-from-lin}); \emph{(ii)} assemble the eigenvectors  of the linearization from the eigenvectors of the deflated (linearization) pencil, using the transformation matrices (\S \ref{SS=Assemble-of-linear}).

\subsection{Quartic eigenvectors from the eigenvectors of the linearization}\label{SS:quart-vectors-from-lin}
For an eigenvalue $\lambda$, the  eigenvectors of the original problem (\ref{eq:QuarticEP}) and the final linearization pencil (\ref{eq:FinalLinearization}) can be related using  explicit formulas. For the reader's convenience, we briefly outline the crux of this connection. 

We use $z\in \mathbb{C}^{4n}$ and $w\in \mathbb{C}^{4n}$ to denote the right and the left eigenvector for the linearization, and $x\in \mathbb{C}^{n}$, $y\in \mathbb{C}^{n}$ to denote the right and the left eigenvector for the original problem. The eigenvalue $\lambda \in \mathbb{C}$ is now fixed as assumed nonzero and finite. 

Let $z = \left( \begin{array}{cccc}
z_1^T & z_2^T & z_3^T & z_4^T
\end{array} \right)^T \in \mathbb{C}^{4n}$, $z_i\in \mathbb{C}^n$, $i = 1,2,3,4$ be a right eigenvector for the eigenvalue $\lambda$ ($0 < |\lambda| < \infty$) of the linearized problem, i.e. $(\mathbb{A}-\lambda \mathbb{B})z = \0$ :
\begin{equation}\label{eq:4-2-1}
(\mathbb{A}-\lambda \mathbb{B}) z = \left\{\left(\begin{array}{cc?cc}
B & \0 & -\Id & \0 \\
D & \0 & \0 & -\Id \\ \thickhline
\0 & -\Id & \0 & \0 \\
E & \0 & \0 & \0
\end{array}\right)-\lambda \left(\begin{array}{cc?cc}
-A & \0 & \0 & \0 \\
-C & -\Id & \0 & \0 \\\thickhline
\0 & \0 & -\Id & \0 \\
\0 & \0 & \0 & -\Id
\end{array}\right)\right\} \left(\begin{array}{c}
z_1\\
z_2\\
z_3\\
z_4
\end{array}\right) = \left(\begin{array}{c}
\0\\
\0 \\
\0\\
\0
\end{array}\right).
\end{equation}
By equating the corresponding block components on the left and on the right we get
\begin{eqnarray}
Bz_1-z_3+\lambda A z_1 &=&\0 \Leftrightarrow z_3 = (\lambda A + B)z_1, \label{eq:z-1}\\
Dz_1-z_4+\lambda Cz_1 + \lambda z_2 &=& \0 \Leftrightarrow Dz_1+ (1/\lambda)Ez_1+\lambda Cz_1 + \lambda ^2(\lambda A + B)z_1 = \0, \label{eq:z-2}\\
-z_2 + \lambda z_3 &=& \0 \Leftrightarrow z_2 = \lambda z_3, \label{eq:z-3}\\
Ez_1 +\lambda z_4 &=& \0 \Leftrightarrow \lambda z_4 = -Ez_1.\label{eq:z-4}
\end{eqnarray}
It follows immediately that $z_1\neq \0$; if $\mathrm{det(\lambda A +B)}\neq 0$, then, in addition, $z_3\neq \0$ and $z_2\neq\0$; if $\mathrm{det}(E)\neq 0$, then also $z_4\neq\0$. 
Using (\ref{eq:z-2}) we easily check that $x = z_1/\lambda$ is an eigenvector of the original quartic problem. Further, (\ref{eq:z-1}) implies that $x$ satisfies $z_3 = \lambda (\lambda A + B)x$, and (\ref{eq:z-3}) yields $z_2 = \lambda^2(\lambda A + B)x$, and finally from (\ref{eq:z-4}) it follows that  $z_4 = -Ex$.  Similarly, if we initially assume that $x$ is an eigenvector of the quartic problem, these formulas for the $z_i$'s give an eigenvector of (\ref{eq:4-2-1}). 

An analogous computation reveals a left eigenvector $y$, using the partitioned left eigenvector of the linearization, as $w = \begin{pmatrix}
w^T_1 & w^T_2 & w^T_3 & w^T_4
\end{pmatrix}^T$, $w_i \in \mathbb{C}^n,i=1,2,3,4$. Altogether, we obtain the following relations between the two sets of eigenvectors:
\begin{equation}\label{eq:RightEigenvectorRecovery} 
z = \begin{pmatrix}
z_1\\
z_2\\
z_3\\
z_4
\end{pmatrix} = \begin{pmatrix}
\lambda x\\
\lambda^2(\lambda A + B)x\\
\lambda (\lambda A + B) x\\
- E x
\end{pmatrix} ,  \;\:
w = \begin{pmatrix}
w_1\\
w_2\\
w_3\\
w_4
\end{pmatrix} = \begin{pmatrix}
\lambda^3y\\
\lambda^2 y\\
\lambda y\\
y
\end{pmatrix}.
\end{equation}
For both the right and the left eigenvector there are four choices to recover $x$ and $y$. 
Reconstruction of the left eigenvector seems easier. We just choose one of the block components $w_1,w_2,w_3$ or $w_4$ and rescale appropriately.

For the right eigenvector we can choose $z_1$, $(\lambda A + B)^{-1}z_2$, $(\lambda A + B)^{-1}z_3$ or $E^{-1}z_4$. Notice that, for the last three choices we have to solve system of linear equations in order to compute the wanted vector. Given all the difficulties in numerical solution of nonlinear eigenvalue problem, we ought to use all alternatives in order to obtain better output -- in this case, for instance, we can solve all systems and select the vector with smallest residual. 

\begin{remark}\label{REM-zero-eig}
If $\lambda = 0$, for the corresponding right eigenvector we have $Ex = \0$ and $\mathbb{A}x = \0$. By the same reasoning as above, we conclude the following connection $z = \left(\begin{array}{cccc}
x^T & \0 & (Bx)^T & (Dx)^T
\end{array}\right)^T$.
\end{remark}
\subsubsection{\underline{Computing  $(\lambda A + B)^{-1}z_2$, $(\lambda A + B)^{-1}z_3$ or $E^{-1}z_4$ multiple times}}
Inverting $E$ (assuming $\mathrm{det}(E)\neq 0$) multiple times can be done by reusing initially computed LU decomposition. On the other hand computing  $(\lambda A + B)^{-1}z_2$, $(\lambda A + B)^{-1}z_3$ for $4n$ values of $\lambda$ is not that simple because the coefficients of the linear system change with $\lambda$; $O(n^3)$ \emph{flops} per eigenvalue to compute the corresponding eigenvector is prohibitive complexity. Fortunately, this can be reduced using a bag of tricks for solving shifted linear systems. {In particular, this problem is similar to evaluating the transfer function of a descriptor LTI dynamical system at multiple frequencies \cite[\S 4]{descriptor-sys-Mehrmann-2006}.} 

 We can compute the triangular-Hessenbeg form of $(A,B)$, i.e. a unitary $Q$, an upper triangular $T$ and an upper Hessenberg matrix $H$ can be constructed in $O(n^3)$ time so that $A = Q T Q^*$, $B=QHQ^*$. Hence, for any vector $v$
$$
(\lambda A + B)^{-1}v = Q [ (\lambda T + H)^{-1}(Q^* v) ] ,
$$
which has $O(n^2)$ complexity because $\lambda T + H$ is upper Hessenberg. This means that the total work (for all $4n$ eigenvalues) of choosing the eigenvectors with smallest residuals remains $O(n^3)$. (Here, the tacit assumption is that $A+\lambda B$ is nonsingular and well conditioned with respect to inversion.) {These details can be taken into account for a development of an optimized software for multicore architectures; for more information see \cite{BBD-Gen-Hess-2013},  \cite{BBD-shifted-2018}.}
\begin{remark}
In some applications, such as e.g. computing deformation modes of thin-walled structures \cite{VIEIRA2014575}, the cubic term is zero, $B=\0$, so that the shifted systems can be replaced with linear system matrix $A$ for all $\lambda$'s. Other details include e.g. the case of real data and using the complex conjugate eigenpairs to save unnecessary computation. Here we omit those details and leave them for the detailed description of a software implementation, which is a subject of our future  work.
\end{remark}

\subsubsection{Least squares reconstruction of the eigenvectors}\label{SSS=LS-vectors}
Since in a finite precision computation the computed eigenvector $z$ is only an approximation (thus noisy), and since  $A+\lambda B$ is not guaranteed to be well conditioned, it makes sense to turn the conditions (\ref{eq:RightEigenvectorRecovery}) into a least squares problem, but keeping in mind than we may have to solve it $4n$ times (i.e. we may take e.g. only two conditions to form the least squares problem).
	So, for instance, we can compute $x$ by solving the least squares problem
	\begin{equation}\label{eq:LS-for-x}
	\left\| \begin{pmatrix} \lambda\Id_n \cr E \end{pmatrix} {x} - \begin{pmatrix} z_1\cr -z_4\end{pmatrix}\right\|_2
	\longrightarrow \min \;\;
	(\;\mbox{or e.g.}\;\;\left\| \begin{pmatrix} \Id_n \cr E \end{pmatrix} {x} - \begin{pmatrix} z_1/\lambda \cr -z_4\end{pmatrix}\right\|_2
	\longrightarrow \min\;)
	\end{equation}
	{Actually, the second (and any additional) condition serves as a regularization that can be given a positive weight. Such a strategy can be used  for a deeper study of selected eigenpairs. We omit the details for the sake of brevity.}

	If the data is well scaled, then for some eigenvalues (semi-)normal equations can be used.
	In general, the least squares problem (\ref{eq:LS-for-x}) can be solved efficiently for any eigenvalue $\lambda\neq 0$ by pre-computing the SVD $E=U_E \Sigma_E V_E^*$ (which actually may be available if we used it for a strong rank revealing of $E$) and then, for each triple $\lambda$, $z_1$, $z_4$, solving in $O(n^2)$ \emph{flops} the equivalent problem
\begin{equation}\label{eq:LS-for-x-simpler}
\left\| \begin{pmatrix} \lambda\Id_n \cr \Sigma_E \end{pmatrix} {V_E^*x} - \begin{pmatrix} V_E^*z_1\cr -U_E^*z_4\end{pmatrix}\right\|_2
\longrightarrow \min . \;\;(\;\mbox{or}\;\;
\left\| \begin{pmatrix} \Id_n \cr \Sigma_E \end{pmatrix} {V_E^*x} - \begin{pmatrix} V_E^*z_1/\lambda\cr -U_E^*z_4\end{pmatrix}\right\|_2
\longrightarrow \min. \;)
\end{equation}
If $\lambda=0$, then, based on Remark \ref{REM-zero-eig}, the corresponding eigenvector can be found from either of the following least squares problems
\begin{equation}
\left\| \begin{pmatrix} \Id_n \cr B \end{pmatrix} {x} - \begin{pmatrix} z_1\cr z_3\end{pmatrix}\right\|_2
\longrightarrow \min,\;\;\;
 \left\| \begin{pmatrix} \Id_n \cr D \end{pmatrix} {x} - \begin{pmatrix} z_1\cr z_4\end{pmatrix}\right\|_2
 \longrightarrow \min ,
\end{equation}
which can be efficiently solved for all eigenvectors $z$ of $\lambda=0$, using one of the approaches discussed above. Other possibilities include e.g. using the bidiagonalization instead of the SVD (of $B$ or $D$).
\subsection{Assembling the eigenvectors of the linearization }\label{SS=Assemble-of-linear}
Let $\widetilde{z}$ and $\widetilde{w}$ be the computed right and left eigenvector for the linearization pencil (\ref{eq:FinalLinearization}). {Both right and left eigevectors will have $4n$ elements if no deflation occurred, otherwise the number of elements will be $4n-d$}, where $d$ is the total number of zero and infinite eigenvalues deflated. $4n-d$ is also the dimension of the truncated pencil $\widetilde{\mathbb{A}}_{22} - \lambda \widetilde{\mathbb{B}}_{22} = P(\mathbb{A} - \lambda \mathbb{B})Q$ which is passed to the QZ algorithm for computation of finite nonzero eigenvalues. 

\subsubsection{Case 1: No deflation has occurred} Let $\widetilde{z}$ and $\widetilde{w}$ be the right and the left eigenvector of the transformed pencil $P(\mathbb{A}-\lambda \mathbb{B})Q$. The corresponding right and the left eigenvectors for the original linearization pencil are $z = Q\widetilde{z}$ and $w = P^T\widetilde{w}$. The right and the left eigenvector for the quartic problem are computed as described in \S \ref{SS:quart-vectors-from-lin}.
\subsubsection{Case 2: Deflation has occurred} 
Let $n_{\ell+1}$ be the dimension of the deflated linearization $A_{\ell+1,\ell+1}-\lambda B_{\ell+1,\ell+1}$, i.e. both $A_{\ell+1,\ell+1}$ and $B_{\ell+1,\ell+1}$ are regular. Let $\widetilde{z}\in \mathbb{C}^{n_{\ell+1}}$ and $\widetilde{w}\in \mathbb{C}^{n_{\ell+1}}$ be the right and the left eigenvector for a finite nonzero eigenvalue $\lambda$.

To recover eigenvectors of the initial linearization,  we must lift $\widetilde{z}$ and $\widetilde{w}$ to the $4n$-dimensional space. For the right eigenvector this is easy; we just append  $4n-n_{\ell+1}$ zeros to $\widetilde{z}$ to get $z = Q\begin{pmatrix}
\widetilde{z}^T &
\0_{1\times(4n-n_{\ell+1}) }
\end{pmatrix}^T$.

For the left eigenvector, let $\widetilde{w}_2 \in \mathbb{C}^{n-n_{\ell+1}}$ be the vector satisfying $\begin{pmatrix}
\widetilde{w}^T & \widetilde{w}_2^T
\end{pmatrix} P(\mathbb{A}-\lambda \mathbb{B})Q = \0$. From
\begin{equation}
\begin{pmatrix}
\widetilde{w}^T & \widetilde{w}_2^T
\end{pmatrix} P(\mathbb{A} - \lambda \mathbb{B})Q = 	\begin{pmatrix}
\widetilde{w}^T & \widetilde{w}_2^T
\end{pmatrix} \begin{pmatrix}
\mathbb{A}_{\ell+1,\ell+1} - \lambda \mathbb{B}_{\ell+1,\ell+1} & X\\
\0 & Y
\end{pmatrix}
\end{equation}
we conclude that $\widetilde{w}^T_2 = -\widetilde{w}^T XY^{-1}$. (Note that it follows from the procedure in \S \ref{SSS=one-singular} that $Y$ is nonsingular. {Namely, $Y$ is a block upper triangular matrix with the diagonal blocks that  are by construction nonsingular.}) Now, the left eigenvector for the original linearization is $w = P^T \begin{pmatrix}
	\widetilde{w}^T & \widetilde{w}_2^T
\end{pmatrix}^T$. 

The right eigenvectors for the zero eigenvalue span the nullspace of the matrix $E$. The corresponding basis is computed for the orthogonal complement of the range of $E^*$. To compute this basis we can use the already computed QR factorization of $E$ (\ref{eq:RankRevealingQRAE}) as follows. First compute the QR factorization of $\Pi_E\widehat{R}_E^* = Q_{\widehat{R}^*_{E}}R_{\widehat{R}^*_E}$. Now, the last $n-r_E$ columns of $Q_{\widehat{R}^*_E}$ represent the basis for the nullspace of the matrix $E$. Similarly, the right eigenvectors of the infinite eigenvalue span the nullspace of the matrix $A$. The basis is computed using the already computed QR factorization (\ref{eq:RankRevealingQRAE}). Again, compute the QR factorization of $\Pi_A\widehat{R}_A^* = Q_{\widehat{R}^*_{A}}R_{\widehat{R}^*_A}$, and the last $n-r_A$ columns of $Q_{\widehat{R}^*_A}$ represent the basis for the nullspace of $A$.

The left eigenvectors for the zero eigenvalue are determined as the last $n-r_E$ columns of the unitary matrix $Q_E$ from the corresponding QR factorization, and the left eigenvectors for the infinite eigenvalue are selected as the last $n-r_A$ columns of the unitary matrix $Q_A$ from the QR factorization of $A$.

\section{Backward error analysis}\label{S=BackError}
{As we discussed in \S \ref{SS=Scaling} and \S \ref{SS=Quandary-infinite-eigs}, it is important that the computational errors in the preprocessing phase correspond (in a backward error sense) to small perturbations of the initial coefficients, i.e. that we have strong norm-wise backward stability. Moreover, if the initial coefficient matrices have graded columns, then it is advantageous to have backward error in each column to be small relative to the size of that column (instead of relative to the norm of the whole matrix). In this section, we analyze the backward errors in the proposed preprocessor, where we use a framework of mixed error analysis. As expected, such an analysis is rather technical.}

{We provide a backward/mixed error analysis for the first two steps of the deflation procedure described in \S \ref{SSS=one-singular} (only one of $A$ and $E$ is singular),  which includes the key details and principles of the analysis.  Extending this further  to the first two steps of the general case \S \ref{SSS=both-singular}  (both $A$ and $E$ singular) would follow the same steps and it is omitted for the sake of brevity.} 

The following proposition deals with the first step, that is, the deflation of the first batch of $n-r_E$ zero eigenvalues.

\begin{proposition}\label{PROP1}
	Let $E \widetilde{\Pi}_E\approx \widetilde{Q}_E \left(\begin{smallmatrix}\widetilde{R}_E\cr \0\end{smallmatrix}\right)$ be the computed rank revealing QR factorization of $E$, and let $\widetilde{r}_E$ be the computed numerical rank of $E$. Further, let $\widetilde{X}=computed(\widetilde{Q}_E^* D)$, $\widetilde{Y}=computed(\widetilde{Q}_E^* C)$. 
	Let 
	\begin{equation}\label{eq:LGEVP:n+rE:computed}
	\widetilde{\mathbb{A}}_{22}-\lambda\widetilde{\mathbb{B}}_{22}= \left( \begin{array}{c?c} 
	\begin{array}{c|c}
	B & \0\\ \hline
	\widetilde{X} & \0
	\end{array} & \begin{array}{c|c}
	-\Id_n & \0 \\ \hline
	\0 & -\Id_{\widetilde{r}_E}\\
	\0 & \0
	\end{array} \\ \thickhline
	\begin{array}{c|c}
	\0 & -\Id_n \\ \hline
	\widetilde{R}_E\widetilde{\Pi}^T_E & \0
	\end{array} & \0_{n+\widetilde{r}_E}
	\end{array}\right) - \lambda \left(\begin{array}{c?c}
	\begin{array}{c|c}
	-A & \0\\ \hline
	-\widetilde{Y} & -\widetilde{Q}^*_E
	\end{array} & \0_{2n \times (n+\widetilde{r}_E)} \\ \thickhline
	\0_{(n+\widetilde{r}_E)\times 2n} & -\Id_{n+\widetilde{r}_E}
	\end{array}\right)
	\end{equation} 
	be the computed reduced pencil (\ref{eq:EsingularTruncatedPencil}), extracted from the transformed linearization (\ref{eq:FirstTransformJordanTest}).
	There exists small structured perturbation
	$$
	\delta\widetilde{\mathbb{B}}_{22} = 
	\left(\begin{array}{c?c}
	\begin{array}{c|c}
	\0 & \0\cr \hline
	\0 & -\delta{Q}^*_E
	\end{array} & \0_{2n\times (n+\widetilde{r}_E)} \cr \thickhline
	\0_{(n+\widetilde{r}_E)\times 2n} & \0_{n+\widetilde{r}_E}
	\end{array}\right),
	$$
	such that $\widetilde{\mathbb{A}}_{22}-\lambda (\widetilde{\mathbb{B}}_{22}+\delta \widetilde{\mathbb{B}}_{22})$ 
corresponds to an exact reduced linearization of a quartic matrix polynomial
	$$\lambda^4 A + \lambda^3 B + \lambda^2 (C+\delta C) + \lambda(D+\delta D) + (E+\delta E + \Delta E)$$
	with at least $n-\widetilde{r}_E$ zero eigenvalues,
	where, for all $i=1,\ldots, n$, 
	\begin{equation}\label{eq:dMdCdK:cols}
	\|\delta C(:,i)\|_2 \leq \epsilon_{C} \|C(:,i)\|_2,\;\;	\|\delta D(:,i)\|_2 \leq \epsilon_D \|D(:,i)\|_2,\;\; \|\delta E(:,i)\|_2 \leq \epsilon_{qr} \|E(:,i)\|_2 ,
	\end{equation}	
	and the truncation error from the determination of the numerical rank of $E$ is\footnote{See Remark \ref{REM-drop-off}.} 
	\begin{equation}
	\max_{j=1:n-k}\|(\Delta E)\widetilde{\Pi}_E(:,k+j)\|_2 \leq \tau \min_{i=1:k}\| (E+\delta E)\widetilde{\Pi}_E(:,i)\|_2; \;\;
	(\Delta E)\widetilde{\Pi}_E(:,1:k) = \0_{n,k}.
	\end{equation}
Here $\epsilon_C$, $\epsilon_D$, $\epsilon_{qr}$ are bounded by a moderate function of $n$ times the machine precision $\roff$, and $\tau$ is  prescribed threshold parameter.
\end{proposition}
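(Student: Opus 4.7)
The plan is to use standard backward/mixed error analysis for the Householder-based operations involved in the first deflation step, carefully tracking each source of floating-point error and assigning it to the appropriate coefficient of the quartic polynomial so that the column-wise structure of the bounds is preserved. First I would invoke the classical backward error analysis for the Householder rank revealing QR factorization: there exist an exactly unitary matrix $\widehat{Q}_E$ close to the computed $\widetilde{Q}_E$ and a perturbation $\delta E$ with $\|\delta E(:,i)\|_2\leq \epsilon_{qr}\|E(:,i)\|_2$ such that
$$\widehat{Q}_E^*(E+\delta E)\widetilde{\Pi}_E=\begin{pmatrix}\widetilde{R}_E\\ \0\end{pmatrix}$$
holds exactly (with the trailing $n-\widetilde{r}_E$ rows set to zero absorbed into the truncation perturbation $\Delta E$ whose columns satisfy the stated tolerance $\tau$ on the pivoted trailing part and vanish on the leading part). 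This fixes once and for all the unitary $\widehat{Q}_E$ that will be used throughout the remainder of the proof.

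Next I would treat the two products $\widetilde{X}=\mathrm{computed}(\widetilde{Q}_E^* D)$ and $\widetilde{Y}=\mathrm{computed}(\widetilde{Q}_E^* C)$. Since $\widetilde{Q}_E^*$ is applied implicitly as a product of Householder reflectors to the columns of $D$ and $C$, the standard mixed error analysis for applying a sequence of Householder reflections column by column gives, for the \emph{same} unitary $\widehat{Q}_E$ as above, column-wise small perturbations $\delta D$ and $\delta C$ with $\|\delta D(:,i)\|_2\leq\epsilon_D\|D(:,i)\|_2$ and $\|\delta C(:,i)\|_2\leq\epsilon_C\|C(:,i)\|_2$, such that exactly $\widetilde{X}=\widehat{Q}_E^*(D+\delta D)$ and $\widetilde{Y}=\widehat{Q}_E^*(C+\delta C)$. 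The column-wise form of the bound is a consequence of unitary invariance of the Euclidean norm: roundoff in the successive reflections produces per-column residuals bounded relative to the norm of the input column, and these residuals can be pulled back through $\widehat{Q}_E$ without amplification.

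With these identifications in hand, I would define $\delta Q_E^*:=\widehat{Q}_E^*-\widetilde{Q}_E^*$ (which is small by construction of the Householder-based representation of $\widetilde{Q}_E$) and let $\delta\widetilde{\mathbb{B}}_{22}$ be the structured perturbation displayed in the statement. This replaces the computed $-\widetilde{Q}_E^*$ that sits explicitly in the $(2,2)$ block of $\widetilde{\mathbb{B}}_{22}$ by the exactly unitary $-\widehat{Q}_E^*$. Substituting $\widetilde{X}=\widehat{Q}_E^*(D+\delta D)$, $\widetilde{Y}=\widehat{Q}_E^*(C+\delta C)$ into $\widetilde{\mathbb{A}}_{22}-\lambda(\widetilde{\mathbb{B}}_{22}+\delta\widetilde{\mathbb{B}}_{22})$, and using that $\widehat{Q}_E^*(E+\delta E+\Delta E)\widetilde{\Pi}_E=\bigl(\widetilde{R}_E^T\ \0\bigr)^T$ holds exactly, one recognizes the result as precisely the pencil one would obtain by applying the exact transformation of the style (\ref{eq:FirstTransformJordanTest})--(\ref{eq:EsingularTruncatedPencil}) (with $\widehat{Q}_E$ instead of $Q_E$) to the perturbed quartic matrix polynomial $\lambda^4 A+\lambda^3 B+\lambda^2(C+\delta C)+\lambda(D+\delta D)+(E+\delta E+\Delta E)$. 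The block structure of this exact reduction, together with the rank-revealed zero block in position $(\widetilde{r}_E+1{:}n,\, :)$ of the transformed $E$-coefficient, immediately yields at least $n-\widetilde{r}_E$ zero eigenvalues.

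The main obstacle I anticipate is the careful bookkeeping that forces a \emph{single} unitary $\widehat{Q}_E$ to simultaneously serve three distinct roles, namely the triangularization of $E$ and the two multiplications by $D$ and by $C$; the textbook Householder analysis naturally produces three slightly different exactly-unitary matrices, and one must argue that they can be consolidated into one $\widehat{Q}_E$ at the cost of enlarging the constants $\epsilon_C,\epsilon_D,\epsilon_{qr}$ only by moderate polynomial factors in $n$ times $\roff$. This is standard but technical; the remaining propagation of bounds is essentially mechanical.
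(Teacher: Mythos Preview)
Your proposal is correct and follows essentially the same route as the paper's proof: invoke the column-wise backward error for the Householder QR of $E$ to obtain an exactly unitary $\widehat{Q}_E$ close to $\widetilde{Q}_E$, push the floating-point errors in $\widetilde{X}$ and $\widetilde{Y}$ back into column-wise perturbations $\delta D,\delta C$ relative to $\widehat{Q}_E$, and then correct the explicit $-\widetilde{Q}_E^*$ block in $\widetilde{\mathbb{B}}_{22}$ by the small forward perturbation $-\delta Q_E^*$ so that everything becomes an exact equivalence transformation of the perturbed quartic pencil.

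Two minor remarks on the comparison. First, the paper actually presents \emph{two} alternative mixed-stability models: one that works with $\widetilde{Q}_E$ (and $\widetilde{Q}_E^{-1}$) throughout and places a forward correction $\Xi$ in the $(2,4)$ block of the $\mathbb{A}$-part, and a second that works with $\widehat{Q}_E$ and places $-\delta Q_E^*$ in the $(2,2)$ block of the $\mathbb{B}$-part; the statement of the proposition (and your write-up) corresponds to the second model. Second, your anticipated ``main obstacle'' of reconciling three different exactly-unitary matrices is largely moot when $\widetilde{Q}_E^*$ is applied implicitly as a product of Householder reflectors: in that setting the standard analysis already yields a \emph{single} $\widehat{Q}_E$ (the exact product of the exact reflectors built from the stored Householder vectors) for all right-hand sides. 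The paper instead treats the more general situation, allowing also explicit \texttt{xGEMM} with a formed $\widetilde{Q}_E$, and absorbs the discrepancy $\widehat{Q}_E-\widetilde{Q}_E$ into slightly enlarged backward perturbations $\Delta D,\Delta C$; your Householder-only argument is a clean special case of this.
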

\begin{proof}
The computed QR factorization of $E$, $E \widetilde{\Pi}_E\approx \widetilde{Q}_E \left(\begin{smallmatrix}\widetilde{R}_E\cr \0\end{smallmatrix}\right)$ can be represented as $(E+\delta E + \Delta E)\widetilde{\Pi}_E = \widehat{Q}_E \left(\begin{smallmatrix}\widetilde{R}_E\cr \0\end{smallmatrix}\right)$, where $\widehat{Q}_E$ is exactly unitary and $\| \widetilde{Q}_E - \widehat{Q}_E\|_F \leq \epsilon_{qr}$; the backward error $\delta E$ is induced by rounding errors during the factorization, and $\Delta E$ is the truncation error from the numerical rank. If we set $\delta Q_E = \widetilde{Q}_E - \widehat{Q}_E$, then $\widetilde{Q}_E = \widehat{Q}_E (\Id_n + \widehat{Q}_E^*\delta Q_E) = (\Id_n + \delta Q_E \widehat{Q}_E^*)\widehat{Q}_E$.
We can also write
$(E+\delta_1 E + \Delta E)\widetilde{\Pi}_E = \widetilde{Q}_E \left(\begin{smallmatrix}\widetilde{R}_E\cr \0\end{smallmatrix}\right)$, where 
$\delta_1 E = \delta E + \delta{Q}_E \left(\begin{smallmatrix}\widetilde{R}_E\cr \0\end{smallmatrix}\right)$, and thus
$\begin{pmatrix}\widetilde{R}_E\cr \0\end{pmatrix}\widetilde{\Pi}_E^T =
\widehat{Q}_E^* (E+\delta E + \Delta E) = \widetilde{Q}_E^{-1} (E+\delta_1 E + \Delta E) .$

There is an important subtlety here, and it is instructive to discuss it in more detail. In the actually computed matrix $\widetilde{\mathbb{B}}_{22}$, stored in the computer memory, one of its blocks is the numerically computed numerically orthogonal $\widetilde{Q}_E$. The backward stability of the QR factorization is usually stated in terms of an exactly unitary matrix $\widehat{Q}_E$, which is an inaccessible object as it is artificially constructed in the proof of backward stability. This is motivated by the desire to be able to say that we have computed the exact QR factorization of a nearby matrix. 
The matrices $\widetilde{X}$ and $\widetilde{Y}$ are computed by using the floating point matrix $\widetilde{Q}_E$, possibly implicitly as in the LAPACK subroutine \texttt{xORMQR}, or by explicit matrix multiply (\texttt{xGEMM} from BLAS) using explicitly formed $\widetilde{Q}_E$, using \texttt{xORGQR} (LAPACK).
The computed $\widetilde{X}$, $\widetilde{Y}$ can be represented as 
%
\begin{eqnarray*}
\!\!\!\!\!\!\! computed(\widetilde{Q}_E^* D) &=& \widetilde{Q}_E^* D + \delta_0 {D} = \widetilde{Q}_E^* (D+\delta D),\;\;\delta D = \widetilde{Q}_E^{-*}\delta_0 D,\;\;
|\delta_0 {D}| \leq \epsilon |\widetilde{Q}_E^*| |D|, \\
&=& \widehat{Q}_E^* (D + \Delta D),\;\;\Delta D = \delta D + \widehat{Q}_E (\delta Q_E)^* D + \widehat{Q}_E (\delta Q_E)^* \delta D,\;\epsilon\leq O(n)\roff ; \\ 
\!\!\!\!\!\!\! computed(\widetilde{Q}_E^* C) &=& \widetilde{Q}_E^* C + \delta_0 {C} = \widetilde{Q}_E^* (C+\delta C),\;\;\delta C = \widetilde{Q}_E^{-*}\delta_0 C,\;\;
|\delta_0 {C}| \leq \epsilon |\widetilde{Q}_E^*| |C|, \\
&=& \widehat{Q}_E^* (C + \Delta C),\;\;\Delta C = \delta C + \widehat{Q}_E (\delta Q_E)^* C + \widehat{Q}_E (\delta Q_E)^* \delta C .
\end{eqnarray*}
%
%
  On the other hand, the unit blocks $\Id_n\oplus \Id_{\widetilde{r}_E}$ in $\widetilde{\mathbb{A}}_{22}$ and $\Id_{n+\widetilde{r}_E}$ in $\widetilde{\mathbb{B}}_{22}$ assume exact orthogonality of $\widetilde{Q}_E$, which is not feasible in finite precision arithmetic.
If we set $\Delta_{\Sigma_1}E=\delta_1 E + \Delta E$, then we can represent the computed linearization (\ref{eq:FirstTransformJordanTest}) as 
\begin{eqnarray*}
	\!\!\!&&\!\!\! \left(\begin{smallmatrix}\Id_n & \0&\0 &\0 \cr \0 & \widetilde{Q}_E^* & \0 & \0 \cr \0 & \0 & \Id_n & \0 \cr \0 & \0 & \0 & \widetilde{Q}_E^{-1} \end{smallmatrix}\right)\! \left\{ \left(\begin{smallmatrix}
		B & \0_n & -\Id_n & \0 \\
		D+\delta D & \0_n & \0_n & -\Id_n \\
		0 & -\Id_n & \0_n & \0_n \\
		E+\Delta_{\Sigma_1}E & \0_n & \0_n & \0_n 
	\end{smallmatrix}\right)- \lambda \left(\begin{smallmatrix}
		-A & \0_n & \0_n & \0_n\\
		-(C+\delta C) & -\Id_n & \0_n & \0_n\\
		\0_n & \0_n & -\Id_n & \0_n \\
		\0_n & \0_n & \0_n & -\Id_n
	\end{smallmatrix}\right)\right\}\! \left(\begin{smallmatrix}\Id_n & \0&\0 &\0 \cr \0 & \Id_n & \0 & \0 \cr \0 & \0 & \Id_n & \0 \cr \0 & \0 & \0 & \widetilde{Q}_E \end{smallmatrix}\right) \\
	&& =
	{\small\left(\begin{array}{c?c}
			\begin{array}{c|c}
				B & \0\cr \hline
				\widetilde{X} & \0
			\end{array} & \begin{array}{c}
			\begin{array}{c|c} - \Id_{n} & \0 \cr\hline \0 & -\Id_n\end{array}
		\end{array} \cr \thickhline
		\begin{array}{c|c}
			0 & -\Id_n \cr \hline
			\widetilde{R}_E \widetilde{\Pi}^T_E & \0 \cr
			\0 & \0
		\end{array} & \0_{2n}
	\end{array}\right) + 
	\left(\begin{array}{c?c}
		\begin{array}{c|c}
			\0 & \0\cr \hline
			\0 & \0
		\end{array} & \begin{array}{c}
		\begin{array}{c|c} \0 & \0 \cr\hline \0 & \Xi\end{array}
	\end{array} \cr \thickhline
	\begin{array}{c|c}
		0 & \0 \cr \hline
		\0 & \0 \cr
		\0 & \0
	\end{array} & \0_{2n}
\end{array}\right)
	 - \lambda \left(\begin{array}{c?c}
	\begin{array}{c|c}
		-A & \0\cr \hline
		-\widetilde{Y} & -\widetilde{Q}^*_E
	\end{array} & \0_{2n} \cr \thickhline
	\0_{2n} & -\Id_{2n}
\end{array}\right)}.
\end{eqnarray*}
Now we see at the block position $(2,4)$ in the left matrix, $\widetilde{Q}_E^* (-\Id_n)\widetilde{Q}_E = -\Id_n + \Xi\neq -\Id_n$. Hence, (\ref{eq:LGEVP:n+rE:computed}) can be justified by a mixed stability scenario -- if the computed pencil is changed by $\|\Xi\|_2 \leq \epsilon_{qr}$ to restore identity at the $(2,4)$ position in the left matrix, then it can be interpreted as an exact transformation of a slightly changed initial pencil.

Alternatively, we can set $\Delta_{\Sigma}E=\delta E + \Delta E$ and model (\ref{eq:FirstTransformJordanTest}) as 
\begin{eqnarray}
	\!\!\!&&\!\!\! \left(\begin{smallmatrix}\Id_n & \0&\0 &\0 \cr \0 & \widehat{Q}_E^* & \0 & \0 \cr \0 & \0 & \Id_n & \0 \cr \0 & \0 & \0 & \widehat{Q}_E^{*} \end{smallmatrix}\right)\! \left\{ \left(\begin{smallmatrix}
		B & \0_n & -\Id_n & \0 \\
		D+\Delta D & \0_n & \0_n & -\Id_n \\
		0 & -\Id_n & \0_n & \0_n \\
		E+\Delta_{\Sigma}E & \0_n & \0_n & \0_n 
	\end{smallmatrix}\right)- \lambda \left(\begin{smallmatrix}
	-A & \0_n & \0_n & \0_n\\
	-(C+\Delta C) & -\Id_n & \0_n & \0_n\\
	\0_n & \0_n & -\Id_n & \0_n \\
	\0_n & \0_n & \0_n & -\Id_n
\end{smallmatrix}\right)\right\}\! \left(\begin{smallmatrix}\Id_n & \0&\0 &\0 \cr \0 & \Id_n & \0 & \0 \cr \0 & \0 & \Id_n & \0 \cr \0 & \0 & \0 & \widehat{Q}_E \end{smallmatrix}\right) \nonumber \\
&&\!\! =\!
{\small\left(\begin{array}{c?c}
		\begin{array}{c|c}
			B & \0\cr \hline
			\widetilde{X} & \0
		\end{array} & \begin{array}{c}
		\begin{array}{c|c} - \Id_{n} & \0 \cr\hline \0 & -\Id_n\end{array}
	\end{array} \cr \thickhline
	\begin{array}{c|c}
		0 & -\Id_n \cr \hline
		\widetilde{R}_E \widetilde{\Pi}^T_E & \0 \cr
		\0 & \0
	\end{array} & \0_{2n}
\end{array}\right)\! - \!\lambda \left\{\! \left(\begin{array}{c?c}
\begin{array}{c|c}
	-A & \0\cr \hline
	-\widetilde{Y} & -\widetilde{Q}^*_E
\end{array} & \0_{2n} \cr \thickhline
\0_{2n} & -\Id_{2n}
\end{array}\right) \! + \!
\left(\begin{array}{c?c}
\begin{array}{c|c}
	\0 & \0\cr \hline
	\0 & -\delta{Q}^*_E
\end{array} & \0_{2n} \cr \thickhline
\0_{2n} & -\0_{2n}
\end{array}\right)\!\right\}}\! . \label{eq-back-model-2}
\end{eqnarray}
In this case, the $(2,2)$ block in the right matrix in (\ref{eq:LGEVP:n+rE:computed}) should be changed from $-\widetilde{Q}_E^*$ to $-\widehat{Q}_E^*$, by adding $\delta Q_{E}^*$,  to establish exact equivalence with a slightly perturbed initial pencil. 
\hfill \end{proof}
\begin{remark}
The forward error introduced in (\ref{eq-back-model-2}) (thus making the model of the analysis of mixed forward-backward type) is due to the fact that in finite precision computation unitarity/orthogonality cannot be guaranteed.\footnote{For that reason the QR factorization can only be mixed stable, and in general it is not backward stable.} Note that this error is localized to one block of the linearization; its structure can be easily seen from the backward analysis of the e.g. Householder QR factorization.	
\end{remark}
We now consider the first two steps and show that the algorithm remains mixed stable. The proof is technically more involved, but it is important to see how the reduced linear pencil after small forward modification exactly corresponds to a quartic pencil with backward errors in the initial coefficient matrices. Also, the proof nicely illustrates the benefits of well scaled data.

\begin{theorem} Assume the notation of Proposition \ref{PROP1}, and let
{\small	
\begin{eqnarray}\label{eq:ABOneSingularSecondStep-computed}
\widetilde{\widehat{P}_2\mathbb{A}_{22}} &=&  \left(\!\!\begin{array}{c?c}
\begin{array}{c|c}
B & \0 \cr \hline
\widehat{Q}^*_{E,1}(D+\Delta D) & \0 \\
\0 & -\Id_n
\end{array} & \begin{array}{c| c}
-\Id_n & \cr \hline
& -\Id_{\widetilde{r}_E} \\
\0 & \0
\end{array} \\ \thickhline
\begin{array}{c | c}
\widetilde{R}_{A_{22}}\widetilde{\Pi}^T_{A_{22}} & \0 \cr 
\0 & \0
\end{array} & \begin{array}{c} \0_{\widetilde{r}_2\times (n+\widetilde{r}_E)}\cr\hline
\0_{(n-\widetilde{r}_2)\times (n+\widetilde{r}_E)}\end{array}
\end{array}\!\!\right)\!, 
\end{eqnarray}
\begin{eqnarray}
\widetilde{\widehat{P}_2\mathbb{B}_{22}}  &=&  
\left(\begin{array}{c?c}
\begin{array}{c|c}
-A & \0\\ \hline
\begin{array}{c}- \widehat{Q}_{E,1}^*(C+\Delta C)\cr
\0_n \end{array} & \begin{array}{c} -\widetilde{Q}_{E,1}^* \cr \0_n \end{array}
\end{array} & \begin{array}{c} \0_{n+\widetilde{r}_E} \cr\hline \0_{\widetilde{r}_E\times (n+\widetilde{r}_E)} \cr\hline \begin{array}{c|c}-\Id_n & \0_{n\times\widetilde{r}_E} \end{array}\end{array} \\ \thickhline
\begin{array}{c|c} -\widetilde{N}_{[1]} & -\widetilde{N}_{[2]}\end{array} & \begin{array}{c|c} \widetilde{N}_{[3]} & \widetilde{N}_{[4]}\end{array}
\end{array}\right)
%
\end{eqnarray}	
}
be the computed version of (\ref{eq:ABOneSingularSecondStep}). There exist small structured forward perturbation  
$$
\mathcal{F}_{\mathbb{B}_{22}}=\left(\begin{array}{c?c}
\begin{array}{c|c}
\0 & \0\\ \hline
\begin{array}{c} \0\cr
\0_n \end{array} & \begin{array}{c} -\delta\widetilde{Q}_{E,1}^* \cr \0_n \end{array}
\end{array} & \begin{array}{c} \0_{n+\widetilde{r}_E} \cr\hline \0_{\widetilde{r}_E\times (n+\widetilde{r}_E)} \cr\hline \begin{array}{c|c} \0_n & \0_{n\times\widetilde{r}_E} \end{array}\end{array} \\ \thickhline
\begin{array}{c|c} \Delta\widetilde{N}_{[1]} & \Delta\widetilde{N}_{[2]}\end{array} & \begin{array}{c|c} \0 & \Delta\widetilde{N}_{[4]}\end{array} 
\end{array}\right),\;\;\|\delta\widetilde{Q}_{E,1}^*\|_2\leq \epsilon_{qr},\;\;
\|\Delta\widetilde{N}_{[4]} \|_2 \leq \epsilon_{qr},
$$
of $\widetilde{\widehat{P}_2\mathbb{B}_{22}}$, and backward errors $\Delta C$, $\Delta_{\Sigma} D$, $\Delta_{\Sigma} E$  such that $\widetilde{\widehat{P}_2\mathbb{A}_{22}}-\lambda (\widetilde{\widehat{P}_2\mathbb{B}_{22}} + \mathcal{F}_{\mathbb{B}_{22}})$
corresponds to an exactly reduced linearization of a quartic matrix polynomial 
$$\lambda^4 A + \lambda^3 B + \lambda^2 (C+\Delta C) + \lambda(D+\Delta_{\Sigma} D) + (E+\Delta_{\Sigma} E),$$
with the exact transformation given in 
(\ref{eq:backw-golbal-A}) and (\ref{eq:backw-golbal-B}) below. Under mild technical assumption (on the size of $n\roff$), {we have, for each column index $i$,
\begin{equation}
  \|\Delta\widetilde{N}_{[1]}(:,i)\|_2 \leq f_1(n)\roff \|\widetilde{N}_{[1]}(:,i)\|_2,\;\;
 \|\Delta\widetilde{N}_{[2]}\|_2 \leq f_2(n)\roff  \|\widetilde{N}_{[2]}\|_2,
\end{equation}
where $f_1(n)$, $f_2(n)$ are mildly growing functions.}
%
%
{Further, with $\Delta C$, $\Delta D$, $\delta E$, $\Delta E$ as in Proposition \ref{PROP1}, it holds that   $\Delta_{\Sigma} D=\Delta D + \widehat{Q}_{E,2}\Gamma_1$ and $\Delta_{\Sigma} E = \delta E+\Delta E+\widehat{Q}_{E,1}\Gamma_2$, where  
\begin{equation}
    \left\| \begin{pmatrix}
\Gamma_1\\
\Gamma_2
\end{pmatrix} (:,i) \right\|_2 \leq \epsilon_{qr} \left\| \begin{pmatrix}
(D+\Delta D)\\ \hline
E+\delta E + \Delta E
\end{pmatrix} (:,i) \right\|_2.
\end{equation}
The latter shows the benefits of well scaled and balanced $D$ and $E$ (\S \ref{SSS=param-scaling}, \S \ref{SSS=graded-m}, Remark \ref{REM=Balancing}).
}
\end{theorem}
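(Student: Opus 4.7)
The plan is to build on Proposition~\ref{PROP1}. After its forward correction, the intermediate pencil $\widetilde{\mathbb{A}}_{22}-\lambda\widetilde{\mathbb{B}}_{22}$ is an exact reduction, by a unitary transformation built from the exact companion $\widehat{Q}_E$, of the linearization of a polynomial whose coefficients differ from $(A,B,C,D,E)$ only by $(\0,\0,\Delta C,\Delta D,\delta E+\Delta E)$, each column-wise small. I will track the rounding errors introduced by the second step---forming and factoring $\Phi$, the permutation $\pi$, and the floating-point application of the computed $\widetilde{Q}_{A_{22}}^*$---and push each back either into a further column-wise backward perturbation of $D$ and $E$, or into a small forward correction collected in $\mathcal{F}_{\mathbb{B}_{22}}$ wherever exact orthogonality cannot be enforced in finite precision.

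The key observation for the column-wise backward structure is that both block rows of $\Phi$ are built from the currently perturbed $D$ and $E$ through blocks of the exactly unitary $\widehat{Q}_E$,
\[
\Phi=\begin{pmatrix}\widehat{Q}_{E,2}^*(D+\Delta D)\\ \widehat{Q}_{E,1}^*(E+\delta E+\Delta E)\end{pmatrix}.
\]
The backward-stable Householder QR of $\Phi$ yields $(\Phi+\delta\Phi)\widetilde{\Pi}_{A_{22}}=\widehat{Q}_{A_{22}}\widetilde{R}_{A_{22}}$ with column-wise bound $\|\delta\Phi(:,i)\|_2\le \epsilon_{qr}\|\Phi(:,i)\|_2$. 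Splitting $\delta\Phi=(\Gamma_1^T,\Gamma_2^T)^T$ conformally and using $\widehat{Q}_{E,j}^*\widehat{Q}_{E,j}=\Id$ (orthonormal rows of the two block-row slices of a unitary matrix), I absorb the perturbation by
\[
\Phi+\delta\Phi=\begin{pmatrix}\widehat{Q}_{E,2}^*(D+\Delta D+\widehat{Q}_{E,2}\Gamma_1)\\ \widehat{Q}_{E,1}^*(E+\delta E+\Delta E+\widehat{Q}_{E,1}\Gamma_2)\end{pmatrix},
\]
which defines $\Delta_\Sigma D$ and $\Delta_\Sigma E$; the column bound on $(\Gamma_1^T,\Gamma_2^T)^T$ follows because unitarity of $\widehat{Q}_E$ gives $\|\Phi(:,i)\|_2^2=\|\widehat{Q}_{E,2}^*(D+\Delta D)(:,i)\|_2^2+\|\widehat{Q}_{E,1}^*(E+\delta E+\Delta E)(:,i)\|_2^2\le \|(D+\Delta D,E+\delta E+\Delta E)^T(:,i)\|_2^2$.

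The transformation $\widehat{P}_2$ acts as the identity on the first $2n+\widetilde{r}_E$ rows and as a permutation followed by $\widetilde{Q}_{A_{22}}^*$ on the last $n$. Standard column-wise bounds for floating-point matrix multiplication, applied to the simple input blocks at the bottom of $\widetilde{\mathbb{B}}_{22}$ (which after $\pi$ carry either zeros, identity columns, or single copies of $\widetilde{Q}_{E,1}^*$), yield $\|\Delta\widetilde{N}_{[j]}(:,i)\|_2\le c_j n\roff\|\widetilde{Q}_{A_{22}}^*\|_F\|V_j(:,i)\|_2$ for the input block $V_j$; under the mild assumption that $n\roff$ keeps $\|\widetilde{Q}_{A_{22}}^*\|_F$ near $\sqrt{n}$ and $\|V_j(:,i)\|_2$ comparable to $\|\widetilde{N}_{[j]}(:,i)\|_2$ up to a moderate factor, the stated bounds on $\Delta\widetilde{N}_{[1]}$ and $\Delta\widetilde{N}_{[2]}$ follow. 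Exactly as in Proposition~\ref{PROP1}, I absorb into $\mathcal{F}_{\mathbb{B}_{22}}$ the $O(\epsilon_{qr})$ discrepancy $\delta\widetilde{Q}_{E,1}^*$ between the computed numerically orthogonal $\widetilde{Q}_{E,1}^*$ and its exact unitary companion $\widehat{Q}_{E,1}^*$, and the $O(\epsilon_{qr})$ term $\Delta\widetilde{N}_{[4]}$ at the block position where an implicit identity would otherwise require exact unitarity of $\widetilde{Q}_{A_{22}}^*\widetilde{Q}_{A_{22}}$.

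Assembling these ingredients, the global left and right transformation matrices---products of the exact unitary companions $\widehat{Q}_E$ and $\widehat{Q}_{A_{22}}$ together with the permutations, lifted to the $4n$-dimensional linearization---apply exactly to the linearization of $\lambda^4 A+\lambda^3 B+\lambda^2(C+\Delta C)+\lambda(D+\Delta_\Sigma D)+(E+\Delta_\Sigma E)$ and produce exactly $\widetilde{\widehat{P}_2\mathbb{A}_{22}}-\lambda(\widetilde{\widehat{P}_2\mathbb{B}_{22}}+\mathcal{F}_{\mathbb{B}_{22}})$; this is precisely the pair of identities the theorem refers to. The hard part is not any single estimate but the bookkeeping: one must verify, block by block on the $4n\times 4n$ pencil, that every implicit identity (which silently assumes unitarity) is accounted for by exactly one entry of $\mathcal{F}_{\mathbb{B}_{22}}$, while every rounding discrepancy on the $D/E$ side is absorbed cleanly into $\Delta_\Sigma D$ or $\Delta_\Sigma E$ through the $\widehat{Q}_{E,j}$ injections, so that nothing leaks into the blocks corresponding to $A$ and $B$, which remain exactly unperturbed.
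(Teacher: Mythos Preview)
Your proposal is correct and follows essentially the same route as the paper: represent the second block row of the matrix being QR-factored through the exact unitary $\widehat{Q}_E$ (your identity $\widetilde{R}_E\widetilde{\Pi}_E^T=\widehat{Q}_{E,1}^*(E+\delta E+\Delta E)$ is exactly what the paper uses), push the column-wise QR backward error $(\Gamma_1^T,\Gamma_2^T)^T$ into $D$ and $E$ via the injections $\widehat{Q}_{E,2}\Gamma_1$ and $\widehat{Q}_{E,1}\Gamma_2$, and absorb the unitarity defects of $\widetilde{Q}_E$ and $\widetilde{Q}_{A_{22}}$ into the forward correction $\mathcal{F}_{\mathbb{B}_{22}}$.

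The one place where the paper is sharper than your sketch is the column-wise bound on $\Delta\widetilde{N}_{[1]}$. You treat the comparability of the input column $\|V_1(:,i)\|_2$ and the output column $\|\widetilde{N}_{[1]}(:,i)\|_2$ as part of the ``mild technical assumption''; the paper actually \emph{derives} it. Writing $\widetilde{N}_{[1]}=\widetilde{\Omega}_1\widetilde{Y}_2+\delta\widetilde{N}_{[1]}$ with $|\delta\widetilde{N}_{[1]}|\le\epsilon|\widetilde{\Omega}_1||\widetilde{Y}_2|$ and $\widetilde{\Omega}_1=\widehat{\Omega}_1+\delta\widetilde{\Omega}_1$, one gets the reverse inequality
\[
\|\widetilde{Y}_2(:,i)\|_2\le\frac{\|\widetilde{N}_{[1]}(:,i)\|_2}{1-\|\delta\widetilde{\Omega}_1\|_2-\epsilon\||\widetilde{\Omega}_1|\|_2},
\]
so the only assumption is that the denominator is bounded away from zero (this is the ``mild assumption on $n\roff$''). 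Feeding this back into $\Delta\widetilde{N}_{[1]}=\delta\widetilde{\Omega}_1\widetilde{Y}_2+\delta\widetilde{N}_{[1]}$ gives the stated column-wise bound with an explicit $f_1(n)$. The analysis of $\widetilde{N}_{[2]}$ is analogous, and $\widetilde{N}_{[4]}=-\widetilde{\Omega}_2$ is handled exactly as you say, by the forward shift $\Delta\widetilde{N}_{[4]}=\widetilde{\Omega}_2-\widehat{\Omega}_2$.
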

\begin{proof}
We continue based on the details and the notation from the proof of Proposition \ref{PROP1}.
The next step is computation of the rank revealing factorization of the block matrix $\left(\begin{smallmatrix}
\widetilde{Q}_{E,2}^*(D+\delta D)\\ \hline
\widetilde{R}_E\widetilde{\Pi}_E^T
\end{smallmatrix}\right)$.
It is convenient to consider the left matrix in (\ref{eq:LGEVP:n+rE:computed}) with the relevant blocks already swapped (see (\ref{eq:ABOneSingularSecondStep}))
$$
{
\left( \begin{array}{c?c} 
\begin{array}{c|c}
B & \0\\ \hline
\begin{array}{c} \widehat{Q}_{E,1}^*(D+\Delta D)\cr \widehat{Q}_{E,2}^*(D+\Delta D)\end{array} & \begin{array}{c} \0\cr \0\end{array}
\end{array} & \begin{array}{c|c}
-\Id_n & \0 \\ \hline
\0 & -\Id_{\widetilde{r}_E}\\
\0 & \0
\end{array} \\ \thickhline
\begin{array}{c|c}
\0_n & -\Id_n \\ \hline
\widetilde{R}_E\widetilde{\Pi}^T_E & \0
\end{array} & \0_{n+\widetilde{r}_E}
\end{array}\right) \longrightarrow
\left( \begin{array}{c?c} 
\begin{array}{c|c}
B & \0\\ \hline
\begin{array}{c} \widehat{Q}_{E,1}^*(D+\Delta D)\cr \0\end{array} & \begin{array}{c} \0\cr -\Id_n\end{array}
\end{array} & \begin{array}{c|c}
-\Id_n & \0 \\ \hline
\0 & -\Id_{\widetilde{r}_E}\\
\0 & \0
\end{array} \\ \thickhline
\begin{array}{c|c}
\widehat{Q}_{E,2}^*(D+\Delta D) & \0 \\ \hline
\widetilde{R}_E\widetilde{\Pi}^T_E & \0
\end{array} & \0_{n\times (n+\widetilde{r}_E)}
\end{array}\right) .
}
$$
For the computed factors $\widetilde{\Pi}_{A_{22}},\widetilde{Q}_{A_{22}},\widetilde{R}_{A_{22}}$ it holds that
\begin{equation}\label{eq:QRA22}
\left[ \begin{pmatrix}
\widetilde{Q}_{E,2}^*(D+\delta D)\\ \hline
\widetilde{R}_E\widetilde{\Pi}_E^T
\end{pmatrix} + \begin{pmatrix}
\Gamma_1\\
\Gamma_2
\end{pmatrix} \right] \widetilde{\Pi}_{A_{22}} 
\equiv
\left[ \begin{pmatrix}
\widehat{Q}_{E,2}^*(D+\Delta D)\\ \hline
\widetilde{R}_E\widetilde{\Pi}_E^T
\end{pmatrix} + \begin{pmatrix}
\Gamma_1\\
\Gamma_2
\end{pmatrix} \right] \widetilde{\Pi}_{A_{22}} 
= \widehat{Q}_{A_{22}}\begin{pmatrix} \widetilde{R}_{A_{22}}\cr \0\end{pmatrix},
\end{equation}	 
where $\widehat{Q}_{A_{22}}$ is exactly unitary and $\widehat{Q}_{A_{22}} \approx \widetilde{Q}_{A_{22}}$, $\widetilde{R}_{A_{22}}$ is $\widetilde{r}_2\times n$ of full row rank,\footnote{The zero block beneath of $\widetilde{R}_{A_{22}}$ may be void.} and $\Gamma=\left(\begin{smallmatrix}\Gamma_1\cr\Gamma_2\end{smallmatrix}\right)$ is the backward error of the QR factorization that can be estimated by
\begin{equation}
\left\| \begin{pmatrix}
\Gamma_1\\
\Gamma_2
\end{pmatrix} (:,i) \right\|_2 \leq \epsilon_{qr} \left\| \begin{pmatrix}
\widehat{Q}_{E,2}^*(D+\Delta D)\\ \hline
\widetilde{R}_E\widetilde{\Pi}_E^T
\end{pmatrix} (:,i) \right\|_2.
\end{equation}
We can push $\Gamma_1$ and $\Gamma_2$ backward in $D$ and $E$, respectively, as follows. 
First, $\widehat{Q}_{E,2}^*(D+\Delta D)+\Gamma_1 = \widehat{Q}_{E,2}^*(D+\Delta D + \widehat{Q}_{E,2}\Gamma_1)$ and
$$
\widehat{Q}_E^* (D+\Delta D + \widehat{Q}_{E,2}\Gamma_1) = \begin{pmatrix}
\widehat{Q}_{E,1}^* (D+\Delta D) \cr
\widehat{Q}_{E,2}^* (D+\Delta D) + \Gamma_1 \end{pmatrix}.
$$
If $D$ and $E$ are so scaled that their norms are nearly of the same order, then $\Gamma_1$, $\Gamma_2$ will be, respectively, their relatively small perturbations. Further, an analogous conclusion holds also column-wise, which motivates scaling the initial data by diagonal matrices to equilibrate on the matrix elements level.\footnote{See Example \ref{NUMEX:balance-butterfly}.} 
Hence, if the additive perturbation $\Delta D$ is replaced with $\Delta_{\Sigma} D=\Delta D + \widehat{Q}_{E,2}\Gamma_1$, $\widetilde{X}_1$ remains unchanged, and $\widetilde{X}_2$ is precisely as in (\ref{eq:QRA22}). (Here $\widetilde{X}=\left(\begin{smallmatrix}\widetilde{X}_1\cr
\widetilde{X}_2\end{smallmatrix}\right)$.) Similarly,
{using $\delta E$, $\Delta E$ from Proposition \ref{PROP1}, we have }
\begin{equation}
\widehat{Q}_E^* (E + \delta E + \Delta E + \widehat{Q}_{E,1}\Gamma_2) = \begin{pmatrix} \widetilde{R}_E\widetilde{\Pi}_E^T + \Gamma_2 \cr \0\end{pmatrix}.
\end{equation}
Now define $\widetilde{\widehat{P}_2}=(\Id_{2n+\widetilde{r}_E}\oplus \widehat{Q}_{A_{22}}^*) \widetilde{\Pi}$ analogously to (\ref{eq:hatP2}).
The  matrix in (\ref{eq:ABOneSingularSecondStep-computed}) can be interpreted as an exact transformation of type (\ref{eq-back-model-2}), followed by the transformation of type (\ref{eq:ABOneSingularSecondStep}) with $\widetilde{\widehat{P}_2}$, but with initial matrices that are changed as $D\leadsto D+\Delta_{\Sigma} D$; $E\leadsto E + \Delta_{\Sigma}E$, $\Delta_{\Sigma}E=\delta E + \Delta E + \widehat{Q}_{E,1}\Gamma_2$. The transformation reads:

\begin{equation}\label{eq:backw-golbal-A}
\left(\begin{smallmatrix}\Id_{2n+\widetilde{r}_E} & \0 & \0 \cr
\0 & \widehat{Q}_{A_{22}}^* & \0 \cr
\0 & \0 & \Id_{n-\widetilde{r}_E} \end{smallmatrix}\right)
\left(\begin{smallmatrix} 
\widetilde{\Pi} & \0  \cr
\0 & \Id_{n-\widetilde{r}_E}  \end{smallmatrix}\right)
\left(\begin{smallmatrix}\Id_n & \0&\0 &\0 \cr \0 & \widehat{Q}_E^* & \0 & \0 \cr \0 & \0 & \Id_n & \0 \cr \0 & \0 & \0 & \widehat{Q}_E^{*} \end{smallmatrix}\right)\!  
\left(\begin{smallmatrix}
B & \0_n & -\Id_n & \0 \\
D+\Delta_{\Sigma} D & \0_n & \0_n & -\Id_n \\
0 & -\Id_n & \0_n & \0_n \\
E+\Delta_{\Sigma}E & \0_n & \0_n & \0_n 
\end{smallmatrix}\right)
\left(\begin{smallmatrix}\Id_n & \0&\0 &\0 \cr \0 & \Id_n & \0 & \0 \cr \0 & \0 & \Id_n & \0 \cr \0 & \0 & \0 & \widehat{Q}_E \end{smallmatrix}\right)
\end{equation}
Consider now the second coefficient. The block swapping on the right matrix (pre-multiplication of the rows  by the $(3n+\widetilde{r}_E)\times (3n+\widetilde{r}_E)$  permutation matrix $\widetilde{\Pi}$) reads
$$
{\small 
\left(\begin{array}{c?c}
\begin{array}{c|c}
-A & \0\\ \hline
 \begin{array}{c}- \widehat{Q}_{E,1}^*(C+\Delta C)\cr
-\widehat{Q}_{E,2}^*(C+\Delta C) \end{array} & \begin{array}{c} -\widetilde{Q}_{E,1}^* \cr -\widetilde{Q}_{E,2}^* \end{array}
\end{array} & \0_{2n \times (n+\widetilde{r}_E)} \\ \thickhline
\0_{(n+\widetilde{r}_E)\times 2n} & -\Id_{n+\widetilde{r}_E}
\end{array}\right)\!\! \rightarrow \!\!
\left(\begin{array}{c?c}
\begin{array}{c|c}
-A & \0\\ \hline
\begin{array}{c}- \widehat{Q}_{E,1}^*(C+\Delta C)\cr
\0_n \end{array} & \begin{array}{c} -\widetilde{Q}_{E,1}^* \cr \0_n \end{array}
\end{array} & \begin{array}{c} \0_{n+\widetilde{r}_E} \cr\hline \0_{\widetilde{r}_E\times (n+\widetilde{r}_E)} \cr\hline \begin{array}{c|c}-\Id_n & \0_{n\times\widetilde{r}_E} \end{array}\end{array} \\ \thickhline
\begin{array}{c|c} -\widehat{Q}_{E,2}^*(C+\Delta C) & -\widetilde{Q}_{E,2}^* \cr \0 & \0 \end{array}& \begin{array}{c|c} \0 & \0 \cr \0 & -\Id_{\widetilde{r}_E}\end{array}
\end{array}\right)
}
$$
Recall, $\widetilde{Y}=computed(\widetilde{Q}_E^* C)=\widehat{Q}_E^* (C+\Delta C)$; introduce block-row partition $\widetilde{Y}=\left(\begin{smallmatrix}\widetilde{Y}_1\cr
\widetilde{Y}_2\end{smallmatrix}\right)$ with $\widetilde{Y}_2=\widehat{Q}_{E,2}^*(C+\Delta C)$. Similarly, introduce block-column partitions $\widetilde{Q}_{A_{22}}^* = (\widetilde{\Omega}_1\; \widetilde{\Omega}_2)$, $\widehat{Q}_{A_{22}}^* = (\widehat{\Omega}_1\; \widehat{\Omega}_2)$.
The last column block $\widetilde{N}_{[4]}$ in the matrix
\begin{equation}
\widetilde{N}\!\! =\!\! computed(\!\widetilde{Q}_{A_{22}}^*\! \begin{pmatrix} -\widehat{Q}_{E,2}^*(C+\Delta C) & -\widetilde{Q}_{E,2}^* & \0_{(n-\widetilde{r}_E)\times n} & \0 \cr
\0_{\widetilde{r}_E\times n} & \0_{\widetilde{r}_E\times n} & \0_{\widetilde{r}_E\times n} & -\Id_{\widetilde{r}_E} \end{pmatrix}\!) \!\!=\!\!\begin{pmatrix} -\widetilde{N}_{[1]} & -\widetilde{N}_{[2]}  & \widetilde{N}_{[3]} & \widetilde{N}_{[4]}\end{pmatrix}
\end{equation}
is simply $-\widetilde{\Omega}_2$. Since we used $\widehat{Q}_{A_{22}}$ in the backward error analysis of the left-hand matrix, here we will have to use a mixed error analysis: $-\widetilde{\Omega}_2$ will be changed by a forward error into $-\widehat{\Omega}_2$. Recall that our model of the analysis (using exactly unitary instead of the computed numerically unitary matrices) will also require small forward perturbation to change $-\widetilde{Q}_{E,1}^*$ into $-\widehat{Q}_{E,1}^*$.

Consider now the first two blocks in $\widetilde{N}$. 
\begin{equation}
\widetilde{N}_{[1]}=computed(\widetilde{\Omega}_1 \widetilde{Y}_2) = \widetilde{\Omega}_1 \widetilde{Y}_2 + \delta\widetilde{N}_{[1]} = \widehat{\Omega}_1 \widetilde{Y}_2 + \delta\widetilde{\Omega}_1 \widetilde{Y}_2 + \delta\widetilde{N}_{[1]} ,\;\; |\delta\widetilde{N}_{[1]}|\leq \epsilon 
|\widetilde{\Omega}_1| | \widetilde{Y}_2| .
\end{equation}
{(Here $\epsilon$ estimates the backward error for matrix multiplication, $0\leq\epsilon\leq O(n)\roff$.)}
In this block too, we will commit a forward error and replace it with 
\begin{equation}
\widehat{Q}_{A_{22}}^* \begin{pmatrix} \widehat{Q}_{E,2}^*(C+\Delta C) \cr \0_{\widetilde{r}_E\times n}\end{pmatrix} =  \widehat{\Omega}_1 \widetilde{Y}_2 = \widetilde{N}_{[1]} -\Delta\widetilde{N}_{[1]},\;\; \Delta\widetilde{N}_{[1]} =  \delta\widetilde{\Omega}_1 \widetilde{Y}_2 + \delta\widetilde{N}_{[1]} .
\end{equation}
To estimate this forward change we first note that, for each column index $i$,
$$
\|\widetilde{Y}_2(:,i)\|_2 \leq \frac{\|\widetilde{N}_{[1]}(:,i)\|_2}{1-\|\delta\widetilde{\Omega}_1\|_2-\epsilon \| |\widetilde{\Omega}_1| \|_2} .
$$
Hence 
\begin{equation}\label{eq:DN1}
\|\delta\widetilde{\Omega}_1 \widetilde{Y}_2(:,i)\|_2 \leq \frac{\|\delta\widetilde{\Omega}_1\|_2\|\widetilde{N}_{[1]}(:,i)\|_2}{1-\|\delta\widetilde{\Omega}_1\|_2-\epsilon \| |\widetilde{\Omega}_1| \|_2},\;\;
\|\delta\widetilde{N}_{[1]}(:,i)\|_2 \leq \frac{\epsilon \| |\widetilde{\Omega}_1|\|_2\|\widetilde{N}_{[1]}(:,i)\|_2}{1-\|\delta\widetilde{\Omega}_1\|_2-\epsilon \| |\widetilde{\Omega}_1| \|_2} , 
\end{equation}
and we conclude that $\Delta\widetilde{N}_{[1]}$ is a column-wise small perturbation of $\widetilde{N}_{[1]}$.
Computation of $\widetilde{N}_{[2]}=computed(\widetilde{\Omega}_1\widetilde{Q}_{E,2}^*)$ is analogous, but for the purpose of mixed stability interpretation, $\widetilde{Q}_{E,2}^*$ has to be replaced with $\widehat{Q}_{E,2}^* = \widetilde{Q}_{E,2}^* - \delta\widetilde{Q}_{E,2}^*$, which yields
\begin{equation}
\widetilde{N}_{[2]}= \widehat{\Omega}_1 \widehat{Q}_{E,2}^* + 
\widehat{\Omega}_1 \delta\widetilde{Q}_{E,2}^* + \underbrace{\delta\widetilde{\Omega}_1\widehat{Q}_{E,2}^* + \delta\widetilde{\Omega}_1\delta\widetilde{Q}_{E,2}^*}_{\delta\widetilde{\Omega}_1\widetilde{Q}_{E,2}^*} + \delta\widetilde{N}_{[2]},\;\; |\delta\widetilde{N}_{[2]}|\leq\epsilon |\widetilde{\Omega}_1| |\widetilde{Q}_{E,2}^*| .
\end{equation}
Hence, if the computed right-hand matrix is changed by a forward perturbation as
\begin{eqnarray}
&& \left(\begin{array}{c?c}
\begin{array}{c|c}
-A & \0\\ \hline
\begin{array}{c}- \widehat{Q}_{E,1}^*(C+\Delta C)\cr
\0_n \end{array} & \begin{array}{c} -\widetilde{Q}_{E,1}^* \cr \0_n \end{array}
\end{array} & \begin{array}{c} \0_{n+\widetilde{r}_E} \cr\hline \0_{\widetilde{r}_E\times (n+\widetilde{r}_E)} \cr\hline \begin{array}{c|c}-\Id_n & \0_{n\times\widetilde{r}_E} \end{array}\end{array} \\ \thickhline
 \begin{array}{c|c} -\widetilde{N}_{[1]} & -\widetilde{N}_{[2]}\end{array} & \begin{array}{c|c} \widetilde{N}_{[3]} & \widetilde{N}_{[4]}\end{array}
\end{array}\right) + 
\left(\begin{array}{c?c}
\begin{array}{c|c}
\0 & \0\\ \hline
\begin{array}{c} \0\cr
\0_n \end{array} & \begin{array}{c} -\delta\widetilde{Q}_{E,1}^* \cr \0_n \end{array}
\end{array} & \begin{array}{c} \0_{n+\widetilde{r}_E} \cr\hline \0_{\widetilde{r}_E\times (n+\widetilde{r}_E)} \cr\hline \begin{array}{c|c} \0_n & \0_{n\times\widetilde{r}_E} \end{array}\end{array} \\ \thickhline
\begin{array}{c|c} \Delta\widetilde{N}_{[1]} & \Delta\widetilde{N}_{[2]}\end{array} & \begin{array}{c|c} \0 & \Delta\widetilde{N}_{[4]}\end{array} 
\end{array}\right)  \nonumber \\
&& = \left(\begin{array}{c?c}
\begin{array}{c|c}
-A & \0\\ \hline
\begin{array}{c}- \widehat{Q}_{E,1}^*(C+\Delta C)\cr
\0_n \end{array} & \begin{array}{c} -\widehat{Q}_{E,1}^* \cr \0_n \end{array}
\end{array} & \begin{array}{c} \0_{n+\widetilde{r}_E} \cr\hline \0_{\widetilde{r}_E\times (n+\widetilde{r}_E)} \cr\hline \begin{array}{c|c}-\Id_n & \0_{n\times\widetilde{r}_E} \end{array}\end{array} \\ \thickhline
\begin{array}{c|c} \widehat{Q}_{A_{22}}^* \begin{pmatrix} -\widehat{Q}_{E,2}^*(C+\Delta C) \cr \0_{\widetilde{r}_E\times n}\end{pmatrix} & \widehat{Q}_{A_{22}}^* \begin{pmatrix} -\widehat{Q}_{E,2}^* \cr \0_{\widetilde{r}_E\times n}\end{pmatrix} \end{array} & \begin{array}{c|c} \widehat{Q}_{A_{22}}^*\left(\begin{array}{c}\0\cr\0\end{array}\right) & \widehat{Q}_{A_{22}}^* \left(\begin{array}{c}\0\cr -\Id_{\widetilde{r}_E}\end{array}\right)\end{array}
\end{array}\right) ,
\end{eqnarray}
the resulting matrix is the $(3n+\widetilde{r}_E)\times (3n+\widetilde{r}_E)$ main submatrix of  
\begin{equation}\label{eq:backw-golbal-B}
\left(\begin{smallmatrix}\Id_{2n+\widetilde{r}_E} & \0 & \0 \cr
\0 & \widehat{Q}_{A_{22}}^* & \0 \cr
\0 & \0 & \Id_{n-\widetilde{r}_E} \end{smallmatrix}\right)
\left(\begin{smallmatrix} 
\widetilde{\Pi} & \0  \cr
 \0 & \Id_{n-\widetilde{r}_E}  \end{smallmatrix}\right)
\left(\begin{smallmatrix}\Id_n & \0&\0 &\0 \cr \0 & \widehat{Q}_E^* & \0 & \0 \cr \0 & \0 & \Id_n & \0 \cr \0 & \0 & \0 & \widehat{Q}_E^{*} \end{smallmatrix}\right)\!  \left(\begin{smallmatrix}
-A & \0_n & \0_n & \0_n\\
-(C+\Delta C) & -\Id_n & \0_n & \0_n\\
\0_n & \0_n & -\Id_n & \0_n \\
\0_n & \0_n & \0_n & -\Id_n
\end{smallmatrix}\right) \left(\begin{smallmatrix}\Id_n & \0&\0 &\0 \cr \0 & \Id_n & \0 & \0 \cr \0 & \0 & \Id_n & \0 \cr \0 & \0 & \0 & \widehat{Q}_E \end{smallmatrix}\right).
\end{equation}
\end{proof}

\section{Numerical examples}\label{S=KVATREIG-NUMEX}
In this section, we present numerical examples and compare our new algorithm \texttt{kvarteig} with \texttt{polyeig} from MATLAB,  \texttt{quadeig} \cite{Hammarling:QUADEIG} and \texttt{kvadeig} (including \texttt{balanced\_kvadeig}\footnote{\texttt{balanced\_kvadeig} denotes the \texttt{kvadeig} algorithm enhanced with balancing by diagonal scaling matrices, see Remark \ref{REM=Balancing}.}) \cite{KVADeig-arxiv} applied to the linearization (\ref{eq:FinalLinearization}) and the quadratification (\ref{eq:SecondComapnionFormGrade2}), respectively. 

Our goal is to illustrate the potential of the techniques introduced in \texttt{kvadeig} and \texttt{kvarteig}, and to motivate further development. {We in particular stress the benefits of additional balancing of the coefficient matrices, that is used in combination with the well developed parameter scaling. The analysis of backward errors in \S \ref{S=BackError} indicates, and numerical experiments in this section provide empirical evidence of importance of well balanced data. Balancing is applicable, \emph{mutatis mutandis},  to other solvers as well.}

All test examples are taken from the NLEVP benchmark collection \cite{betcke2010nlevp}. 
{
In all examples and figures shown in this section, the eigenpairs are indexed so that the eigenvalues are sorted increasingly in modulus.
}
\begin{example}\label{NUMEX-1}
We first test \texttt{kvarteig} on three examples with the default input values: \texttt{butterfly} ($n = 64$); \texttt{orr\_sommerfeld} ($n=64$);  \texttt{planar waveguide} ($n=129$). The results are tested using the norm-wise (residual) backward error {\eqref{eq:back-err-0}}.

In the first run of the experiment, 
\texttt{polyeig, quadeig}, and  both variants of \texttt{kvadeig} worked with raw data $A$, $B$, $C$, $D$, $E$, and the parameter scaling is applied in \texttt{quadeig, kvadeig} only to the quadratic pencil $\lambda^2\Mbb+\lambda\Cbb+\Kbb$ from (\ref{eq:SecondComapnionFormGrade2}). In \texttt{balanced kvadeig}, parameter scaling is combined with diagonal balancing of $\Mbb, \Cbb, \Kbb$. For the sake of the experiment, \texttt{kvarteig} worked 
in two modes: \emph{(i)} with parameter scaling (designated as \texttt{kvarteig}); and \emph{(ii)} without parameter scaling, but with the diagonal balancing switched on (designated as \texttt{balanced kvarteig (-s)}). 

{Switching the scaling off  may simulate the case of an unsuccessful parameter scaling of the initial matrix  coefficients.}
Also, this may serve as a simulation of a genuine quadratic problem in which the coefficients are composed of blocks with different parameter dependencies, possibly on different scales -- then parameter scaling cannot resolve different scales inside $\Mbb$, $\Cbb$, $\Kbb$.  Hence, this is primarily a test of the quadratic solvers as potential tools for quadratification based solution of quartic problems. We refer the reader to \S \ref{SSS=graded-m},  Remark \ref{REM=Balancing}, \S  \ref{SSS=quadr-not-enough}, and \cite{KVADeig-arxiv}. Further, with \texttt{balanced kvarteig (-s)} we want to check whether diagonal balancing can make a difference in the absence (or failure) of the parameter scaling.

The extreme values of $\eta$ over all computed  right eigenpairs are given in Table \ref{tt:BEKAVRTeig}:
\vspace{-3mm}
\begin{table}[H]
	\centering
	\caption{Comparison of backward errors for \texttt{polyeig}, \texttt{quadeig}, \texttt{kvadeig} and \texttt{kvarteig}}
	\resizebox{\columnwidth}{!}{
		\begin{tabular}{|l|| c|c|c| c|c|c|}
			\hline
			& \multicolumn{2}{c|}{\texttt{\textbf{butterfly}}} &
			\multicolumn{2}{c|}{\texttt{\textbf{orr\_sommerfeld}}} & \multicolumn{2}{c|}{\texttt{\textbf{planar waveguide}}}\\
			\hline
			\multicolumn{1}{|c||}{\textbf{Algorithm}} & $\min\eta$ & $\max \eta$ &  $\min\eta$ & $\max \eta$ &$\min\eta$ & $\max \eta$\\ \hline
			\texttt{polyeig} & \texttt{2.04e-016} & \texttt{8.61e-016} &\texttt{1.36e-017}  & \texttt{8.01e-006} & \texttt{1.60e-016} &  \texttt{3.08e-012} \\ \hline
			\texttt{quadeig} & \texttt{6.56e-017} & \texttt{2.03e-015} & \texttt{6.11e-015} & \texttt{4.07e-004} & \texttt{4.99e-016} & \texttt{2.03e-009} \\ \hline
			\texttt{kvadeig} & \texttt{6.56e-017} & \texttt{2.03e-015} & \texttt{6.25e-021} & \texttt{2.12e-007} & \texttt{4.75e-016} & \texttt{1.67e-009}\\
			\hline
			\texttt{balanced kvadeig} & \texttt{6.56e-017} & \texttt{2.03e-015} & \texttt{2.81e-021} & \texttt{2.06e-012} & \texttt{1.49e-016} & \texttt{2.32e-012}\\
			\hline
			\texttt{balanced kvarteig (-s)} & \texttt{3.18e-017} & \texttt{9.11e-016} & \texttt{3.40e-021} & \texttt{5.25e-008} & \texttt{3.20e-016} & \texttt{5.16e-012}\\
			\hline
			\texttt{kvarteig} & \texttt{5.84e-017} & \texttt{1.13e-015} & \texttt{6.37e-021} & \texttt{1.76e-015} & \texttt{4.32e-016} & \texttt{1.75e-013}\\
			\hline
	\end{tabular}}
		\label{tt:BEKAVRTeig}
	\end{table}	
\vspace{-3mm}
{Note that \texttt{quadeig} and \texttt{kvadeig} had relatively large relative errors for \texttt{orr\_sommerfeld} and \texttt{planar waveguide}, while \texttt{balanced kvadeig} performed well despite the fact that it received unscaled original matrices. Although contrived, this example illustrates the main point well -- parameter scaling (here applied to $\lambda^2\Mbb+\lambda\Cbb+\Kbb$) combined with diagonal balancing is better than parameter scaling alone. }

{We complete this experiment with the initial parameter scaling included in all methods. It performed well and,  as a result, all measured backward errors were in all five methods at most $O(10^{-12})$. This is of the order of the machine precision multiplied by a low order polynomial of the dimension $n$ of the problem.}

\end{example}	
\begin{example}\label{NUMEX-2}
Structured backward errors provide a better insight into the numerical quality of the computed solutions.  
	For the data of \texttt{orr\_sommerfeld} in Example \ref{NUMEX-1}, we compute for each right eigenpair $\lambda, x$ the component-wise backward error
	 \begin{eqnarray}
&&	  \omega(\lambda,x) \!=\! \min\{ \epsilon  :  (\lambda^4 \widetilde{A}+\lambda^3 \widetilde{B}+ \lambda^2\widetilde{C}+\lambda\widetilde{D}+\widetilde{E})x=\0,\;\;|\delta A|\leq \epsilon |A|, \ldots, |\delta E|\leq \epsilon |E|
	 \} \nonumber \\  
	\!\! &=& \! \max_{i=1:n} \frac{|(\lambda^4 A + \lambda^3 B + \lambda^2 C + \lambda D + E )x|_i}{((|\lambda|^4|A| + |\lambda|^3|B| + |\lambda|^2 |C| + |\lambda||D|+|E|)|x|)_i}\! , \;\widetilde{A}\!=\! A\! +\!\delta A, \ldots, \widetilde{E}\! =\! E\! +\! \delta E. \label{eq:ComponentwiseBE}
	 \end{eqnarray}
{The corresponding error $\omega'(\lambda, y)$ for a left eigenpair $\lambda, y$ is defined analogously. }
We examine the component-wise backward errors in the \texttt{orr\_sommerfeld} example with two sets of defining parameters. 	 
Recall, the function from the NLEVP library for generating this quartic eigenvalue problem has three optional input arguments, $n$, $\omega$ and $R$: $n$ represents the dimension of the problem, $\omega$ is the frequency, and $R$ is the Reynolds number. The default values are: $n=64$, $\omega = 0.26943$ and $R=5772$ (these values are used in Table \ref{tt:BEKAVRTeig}). {In the first test, we use these default values.}

The values of $\omega(\lambda,x)$ and $\omega'(\lambda, y)$  are shown for all computed eigenpairs in Figure \ref{fig:orr1000_default_component_right_left}.

		\begin{figure}[ht]
			\centering
			\begin{minipage}{.5\textwidth}
				\includegraphics[width=1\textwidth]{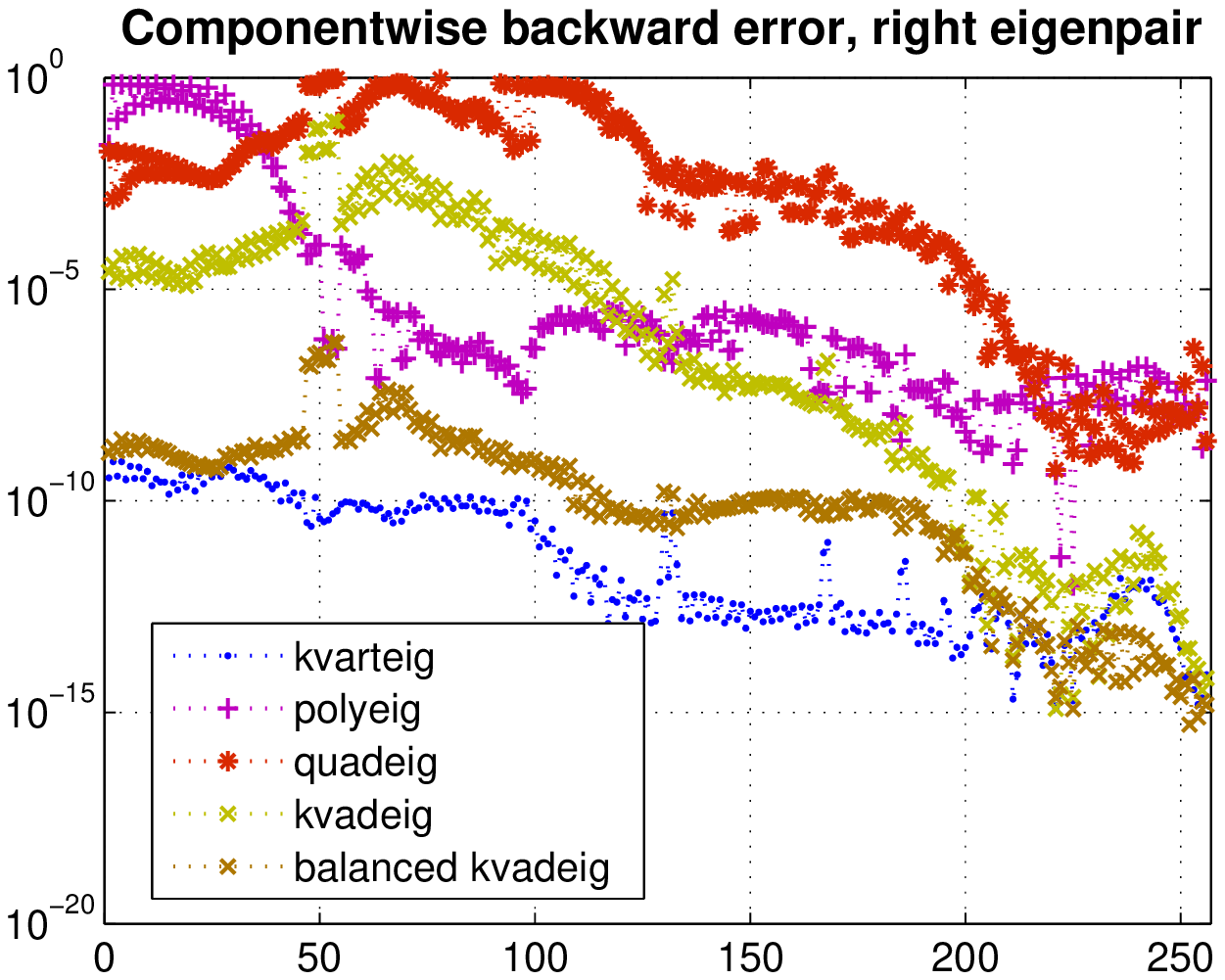}
				
			\end{minipage}%
			\begin{minipage}{.5\textwidth}
				\centering
				\includegraphics[width=1\textwidth]{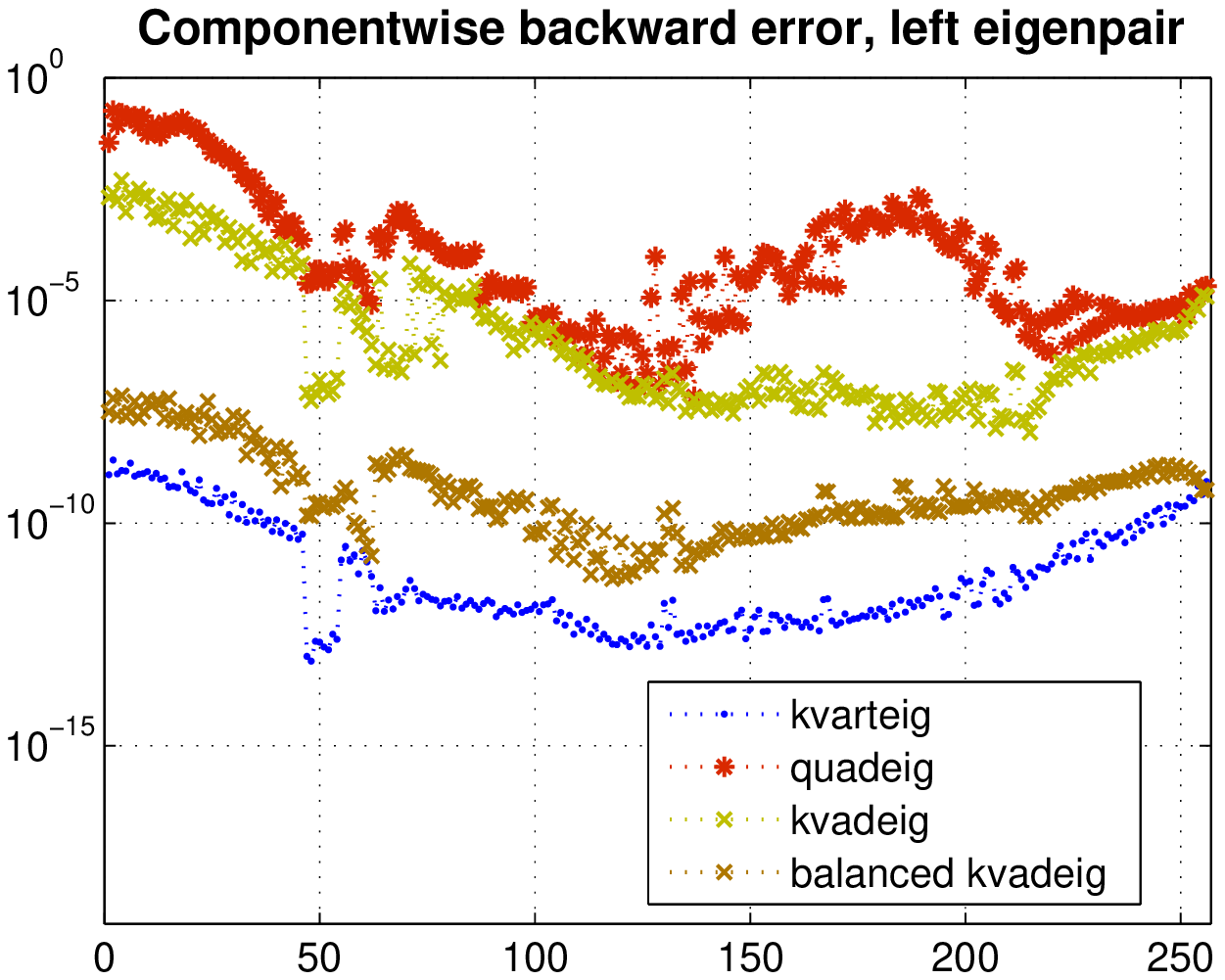}	
				
			\end{minipage}
			\caption{(Example \ref{NUMEX-2}.) Component-wise backward errors (\ref{eq:ComponentwiseBE}) for all computed eigenpairs of the \texttt{orr\_sommerfeld} example with $n=64$, $\omega = 0.26943$ and $R=5772$.  }
			\label{fig:orr1000_default_component_right_left}
		\end{figure}	
	
In the second run of the test, we increase the Reynolds number to  $R=10000$. The norm-wise and the component-wise backward errors for all eigenvalues, are shown in Figure \ref{fig:orr1000_R10000_norm_right_left} and Figure \ref{fig:orr1000_R10000_component_right_left}, respectively. Note how the backward error for \texttt{kvarteig} in Figure \ref{fig:orr1000_R10000_norm_right_left} remains nearly flat at the roundoff level, and how \texttt{kvadeig} also performs well (even with structured backward error for the right eigenpairs), despite being oblivious to the underlying structure of the quadratification and receiving unscaled original coefficients.
	
	\begin{figure}[ht]
		\centering
		\begin{minipage}{.5\textwidth}
			\includegraphics[width=0.99\textwidth]{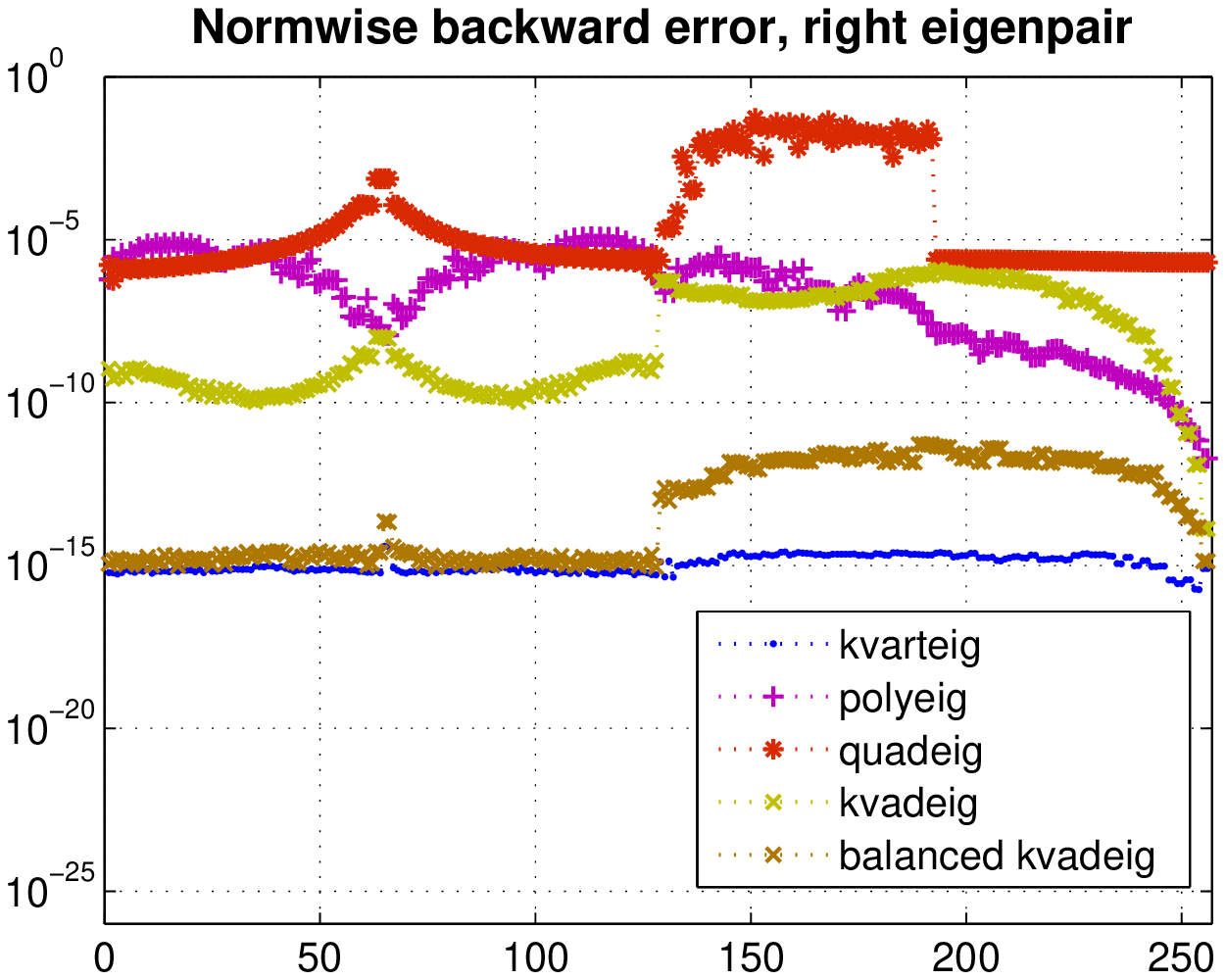}
			
		\end{minipage}%
		\begin{minipage}{.5\textwidth}
			\centering
			\includegraphics[width=0.99\textwidth]{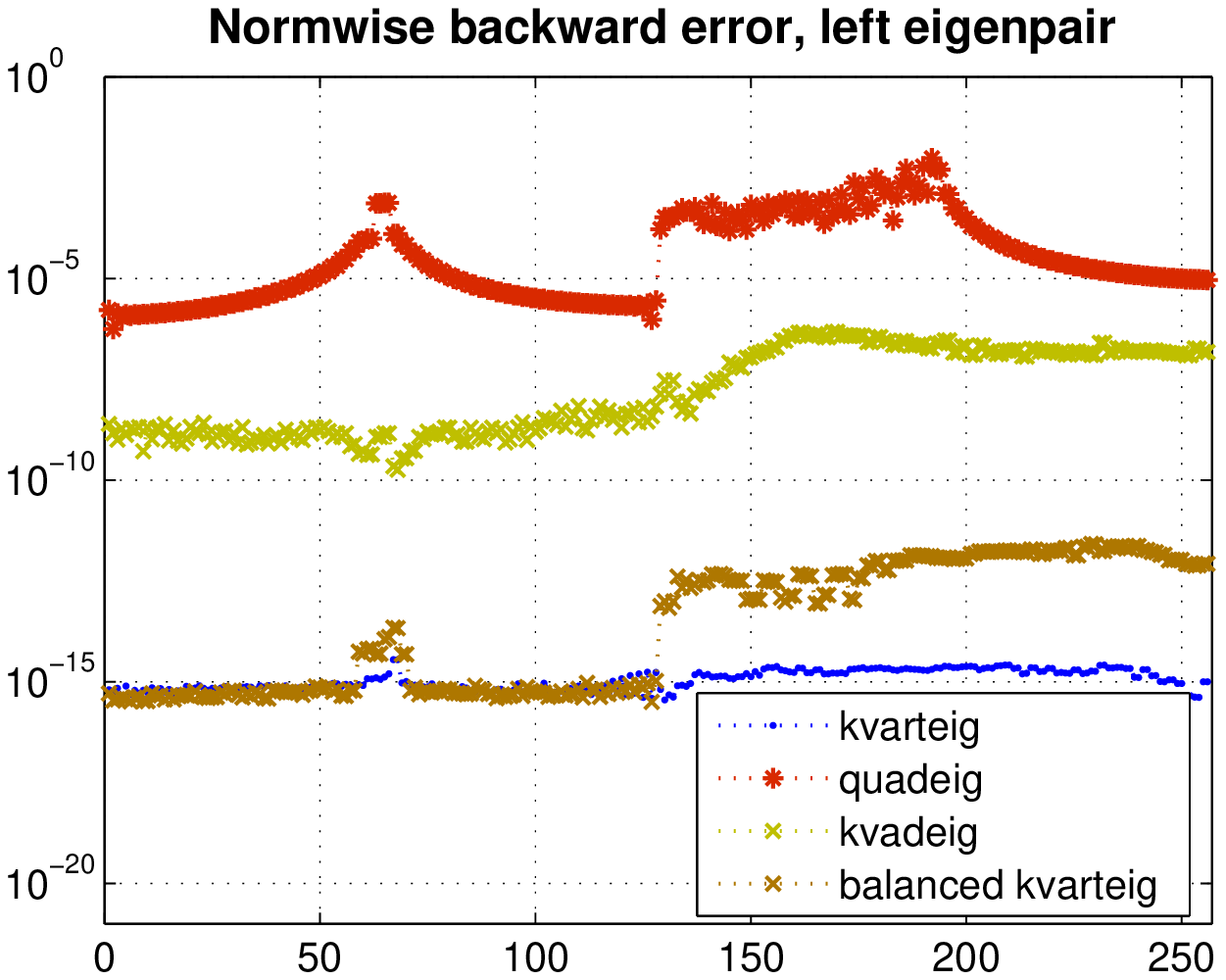}	
			
		\end{minipage}
		\caption{(Example \ref{NUMEX-2}.) The norm-wise backward errors for all computed eigenpairs of the \texttt{orr\_sommerfeld} example with $n=64$, $\omega = 0.26943$ and $R=10000$.}
		\label{fig:orr1000_R10000_norm_right_left}
	\end{figure}
	{A conclusion of this and Example \ref{NUMEX-1} is that quadratic solver equipped with parameter scaling and diagonal balancing might work reasonably well on a quadratification of the quartic problem, even when the scaling of the coefficients of the original quartic problem is omitted or unsuccessful. }
	\begin{figure}[H]
		\centering
		\begin{minipage}{.5\textwidth}
			\includegraphics[width=0.99\textwidth]{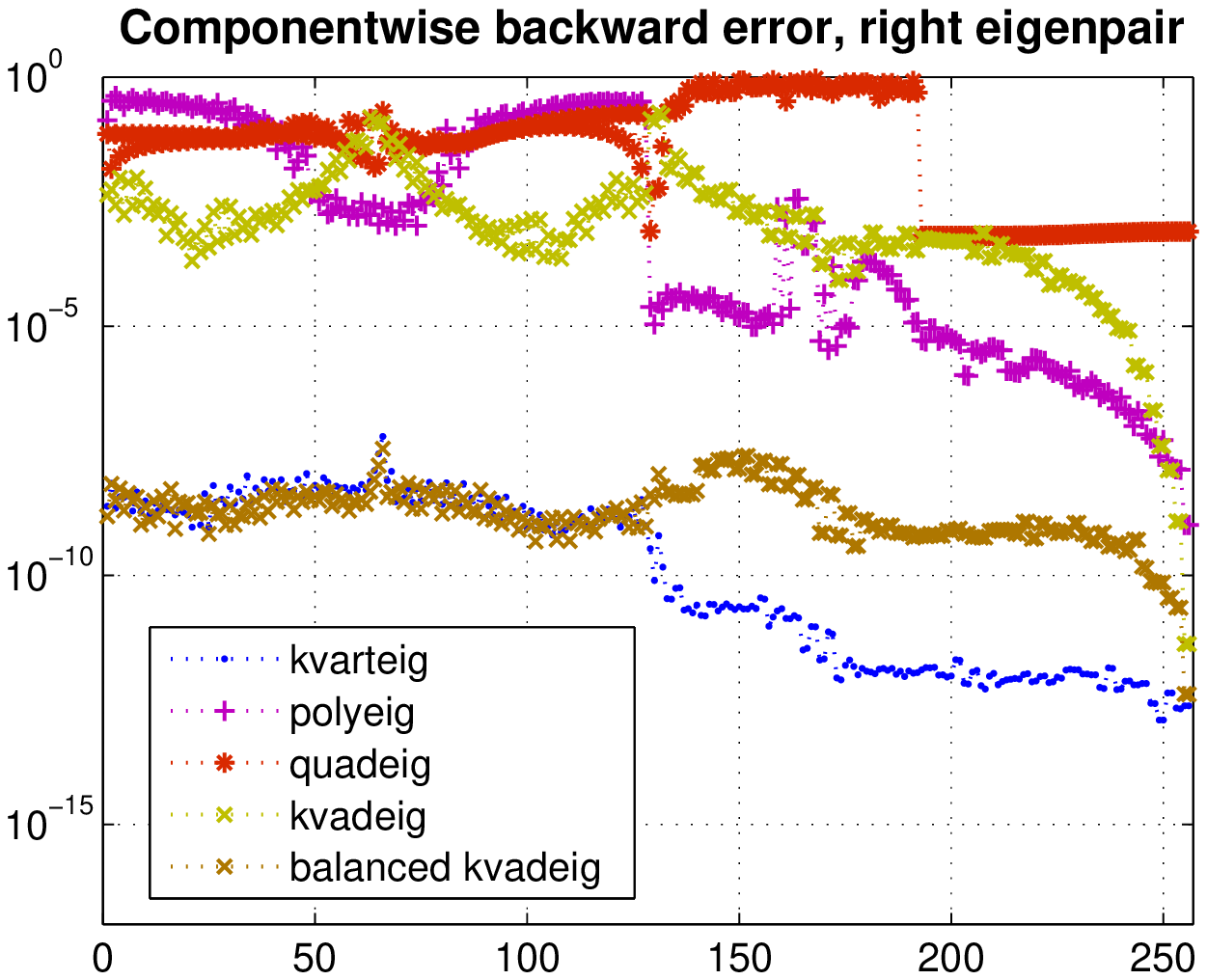}
			
		\end{minipage}%
		\begin{minipage}{.5\textwidth}
			\centering
			\includegraphics[width=0.99\textwidth]{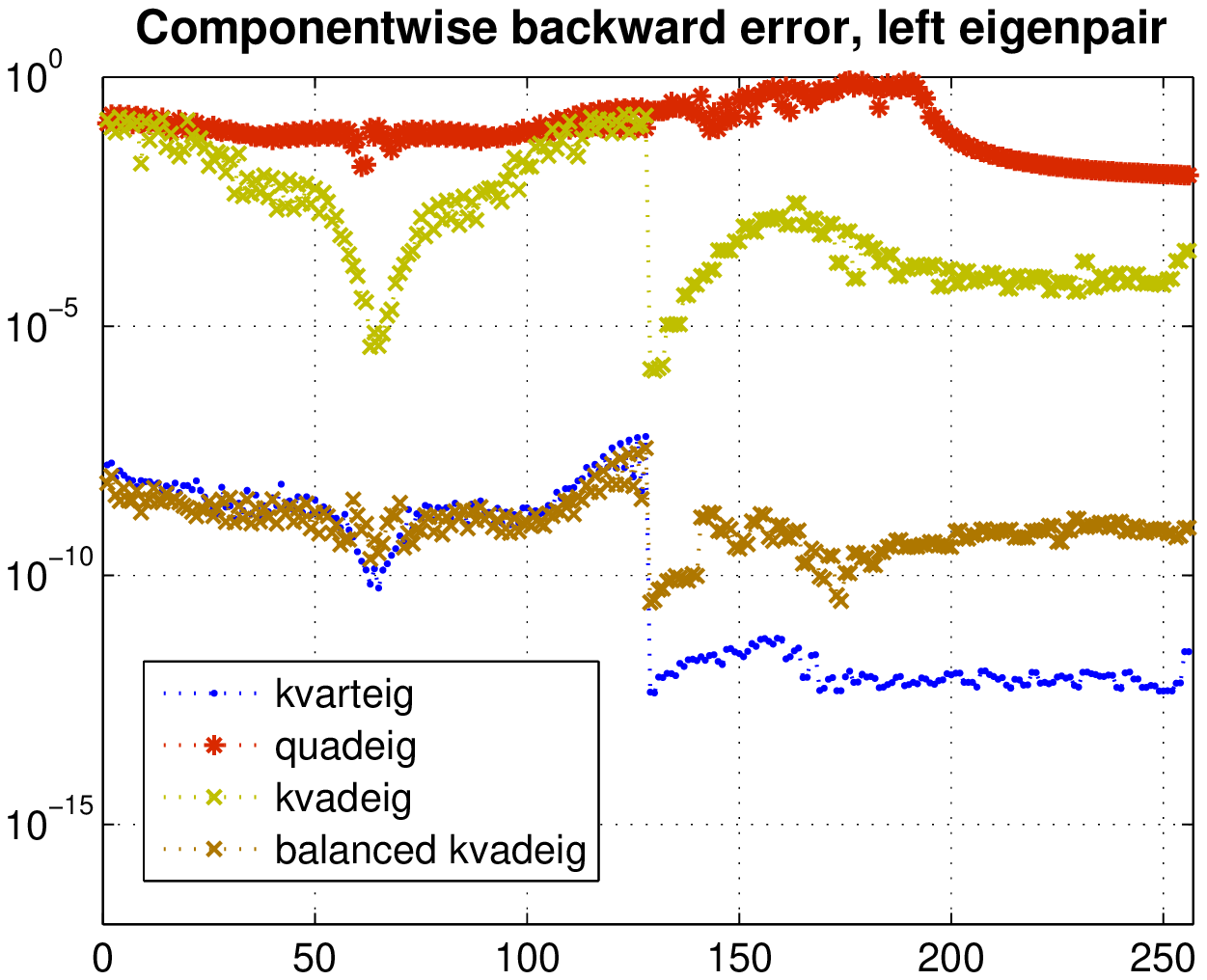}	
			
		\end{minipage}
		\caption{(Example \ref{NUMEX-2}.) The component-wise backward errors of all computed eigenpairs of the \texttt{orr\_sommerfeld} example with $n=64$, $\omega = 0.26943$ and $R=10000$.}
		\label{fig:orr1000_R10000_component_right_left}
	\end{figure}
\end{example}	
\vspace{-6mm}
\begin{remark}
{
The results of this experiment, with the computed backward errors shown in Figure \ref{fig:orr1000_R10000_norm_right_left} and Figure \ref{fig:orr1000_R10000_component_right_left}, are instructive.
First, in this example \texttt{quadeig} deflated $64$ infinite eigenvalues of the quadratic pencil $\lambda^2 \Mbb+\lambda \Cbb+\Kbb$ (see (\ref{eq:SecondComapnionFormGrade2})), because in the preprocessing stage of the algorithm, the numerical rank of the matrix $\Mbb=\left(\begin{smallmatrix}
A & \mathbf{0}\\
C & \mathbb{I}_n
\end{smallmatrix}\right)$ of order $128$ is computed as $64$.  
On the other hand, the existence of infinite eigenvalues in the original quartic eigenvalue problem depends on the rank of the leading coefficient matrix $A$. {If we inspect the singular values\footnote{Singular values are indexed in non-increasing order, $\sigma_i(\cdot)\geq\sigma_{i+1}(\cdot)$.} $\sigma_i(\Mbb)$ of $\Mbb$ and $\sigma_i(A)$ of $A$, then, as clearly shown in Figure \ref{fig:orr1000_R10000_svd}, $A$ is numerically of full rank (its condition number is below $10^6$, so \texttt{kvarteig} safely removed the possibility of infinite eigenvalues). On the other hand, $\sigma_{65}(\Mbb)/\sigma_1(\Mbb)$ is at the level of \emph{dimension of $\Mbb$ times machine precision} and in many algorithms this is the truncation threshold for numerical rank deficiency.}
%
Note that parameter scaling of the quadratic pencil (\ref{eq:SecondComapnionFormGrade2}) cannot remove this problem. 
	
\begin{figure}[ht]
\centering
\begin{minipage}{.5\textwidth}
\includegraphics[width=0.99\textwidth,height=1.8in]{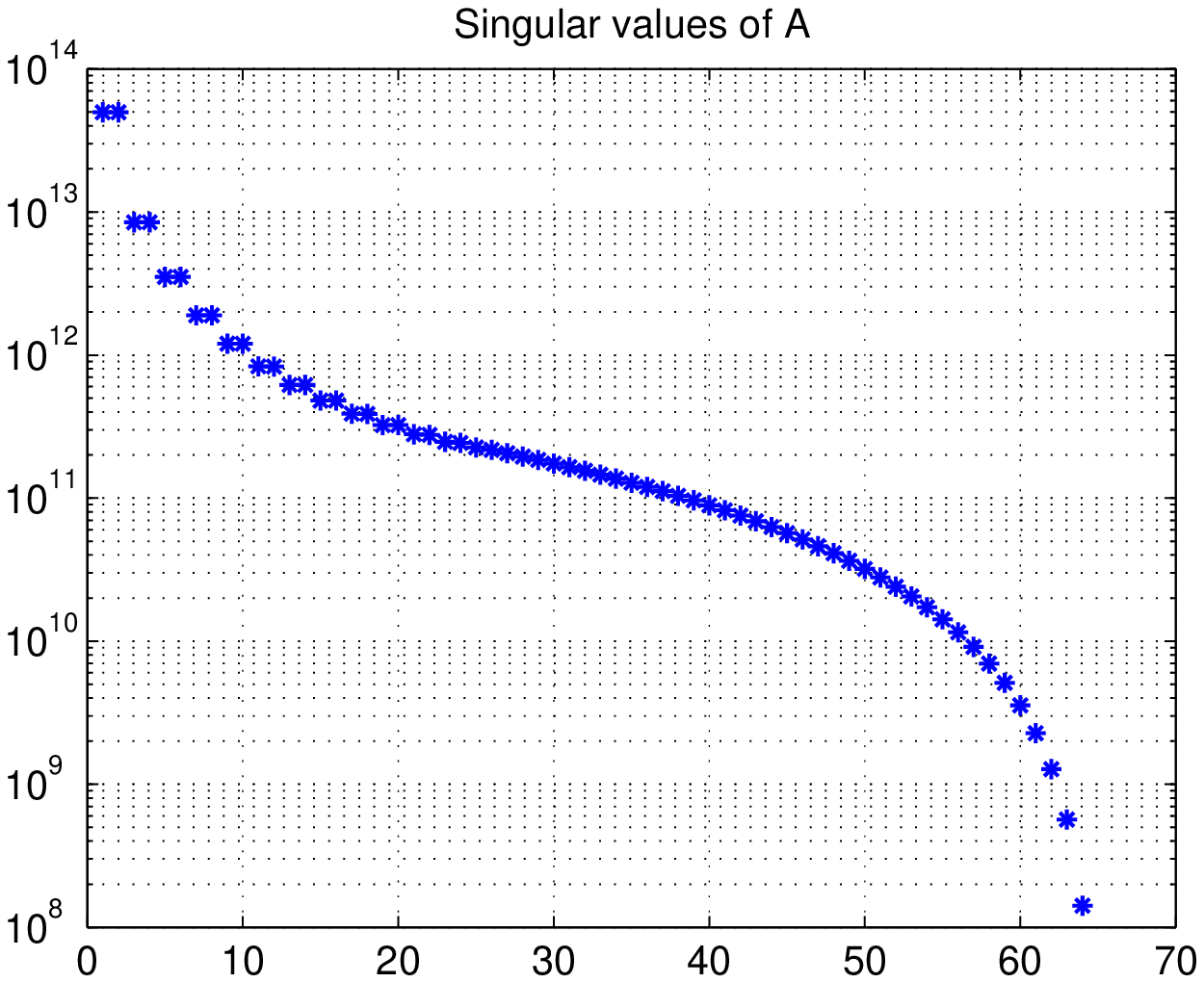}
			
\subcaption{Leading coefficient $A$ of the quartic problem}
\end{minipage}%
\begin{minipage}{.5\textwidth}
\centering
\includegraphics[width=0.99\textwidth,height=1.8in]{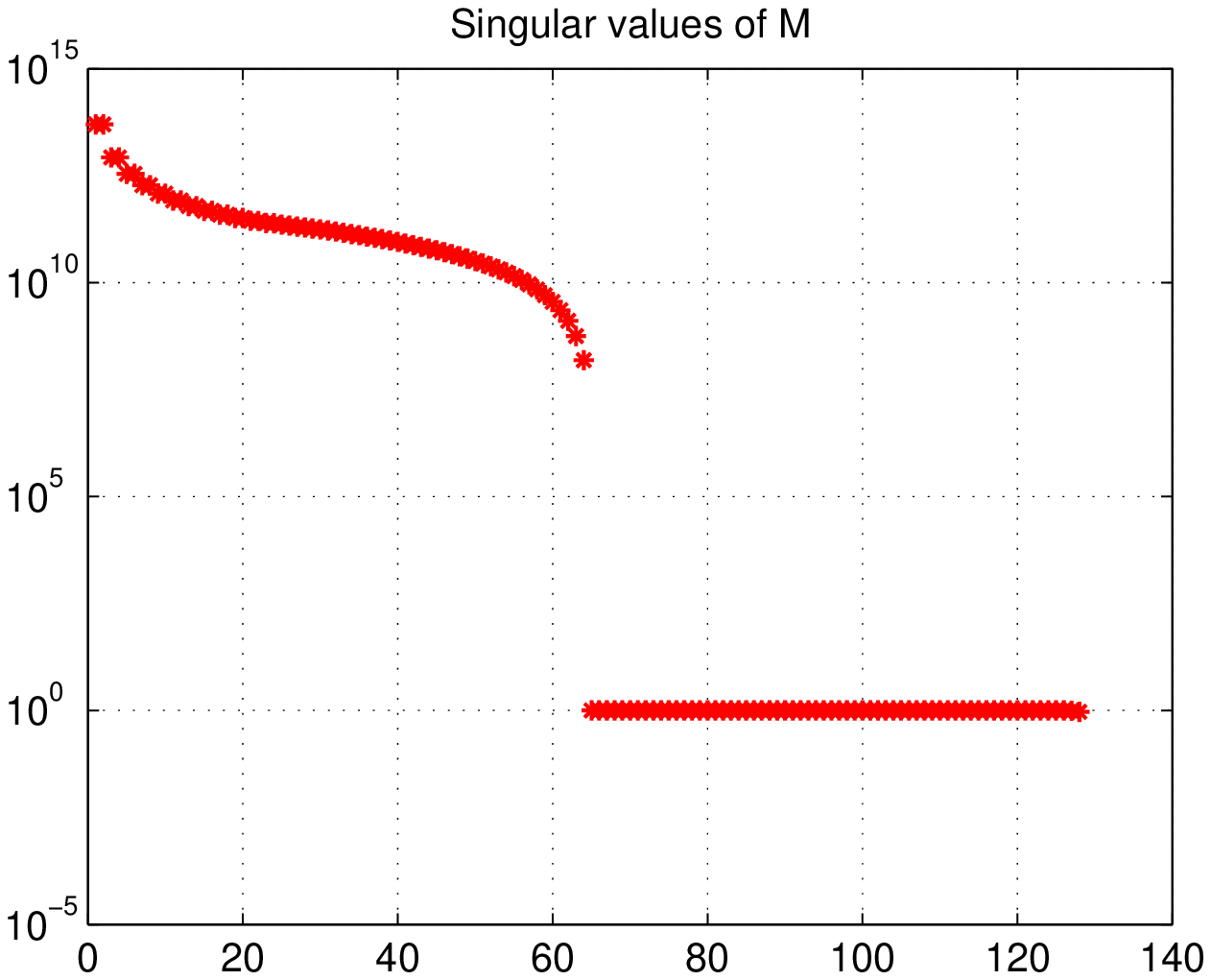}	
			
\subcaption{Leading coefficient $\mathbb{M}$ of the quadratification }
\end{minipage}
\caption{(Example \ref{NUMEX-2}.) Singular values of the leading coefficient matrices of the original quartic problem (\ref{eq:QuarticEP}) and the quadratification (\ref{eq:SecondComapnionFormGrade2}). Note that $\sigma_{\max}(A)/\sigma_{\min}(A)=O(10^6)$, $\sigma_{65}(\Mbb)/\sigma_1(\Mbb)=O(n)\roff=O(10^{-14})$,
$\sigma_{65}(\Mbb)/\sigma_{64}(\Mbb)=O(\sqrt{\roff})$. Here  $\roff\approx 2.2\cdot 10^{-16}$ is the machine precision.}
\label{fig:orr1000_R10000_svd}
\end{figure}
\vspace{-1mm}	
{On the other hand, \texttt{kvadeig} (applied to the same $\lambda^2 \Mbb+\lambda \Cbb+\Kbb$) declared the matrix $\Mbb$ nonsingular, and thus no infinite eigenvalues where deflated nor found by the QZ algorithm. This is because \texttt{kvadeig} uses more local truncation strategy; it truncates at index $i$ if $\sigma_{i+1}(\Mbb)/\sigma_i(\Mbb)$ is estimated to be small;   see Remark \ref{REM-drop-off} and Example \ref{NUMEX-3}. Good results by \texttt{balanced\_kvadeig} are due to the additional  balancing \cite[\S 4.2]{KVADeig-arxiv}, and this example once more justifies our approach in \texttt{kvadeig} (using local truncation strategy and balancing in combination with parameter scaling).}
%
}
\end{remark}

{Now, we turn on the parameter scaling, which is a necessary tool for numerical stability of a polynomial eigensolver. Although the scaling described in \S \ref{SSS=param-scaling} is a simple combination of the existing and well known formulas, it seems that it works well. In particular, in many cases it works well for \texttt{polyeig}, as we already showed in Example \ref{NUMEX-1}. This is illustrated in the next two numerical experiments with the \texttt{orr\_sommerfeld} example of dimensions $n=64$ and $n=1000$.}

\begin{example}\label{NUMEX-OS-R-1000-scaled}
 We use the same benchmark problem as in the second part of Example \ref{NUMEX-2} (\texttt{orr\_sommerfeld} example with $n=64$, $\omega = 0.26943$ and $R=10000$.), but initially we scale the matrices as described in \S \ref{SSS=param-scaling}, so that all algorithms start with scaled data. This example has no infinite eigenvalues.
 The results of all algorithms depend on the QZ algorithm, thus the similar results. {(It seems that \texttt{polyeig} and \texttt{kvarteig} are a little bit better than \texttt{kvadeig} and \texttt{quadeig}, which makes sense because both work on the original coefficients, while the quadratic solvers work on $\Mbb$, $\Cbb$, $\Kbb$ from the quadratification.)}
	
		\begin{figure}[ht]
			\centering
			\begin{minipage}{.5\textwidth}
				\includegraphics[width=0.99\textwidth]{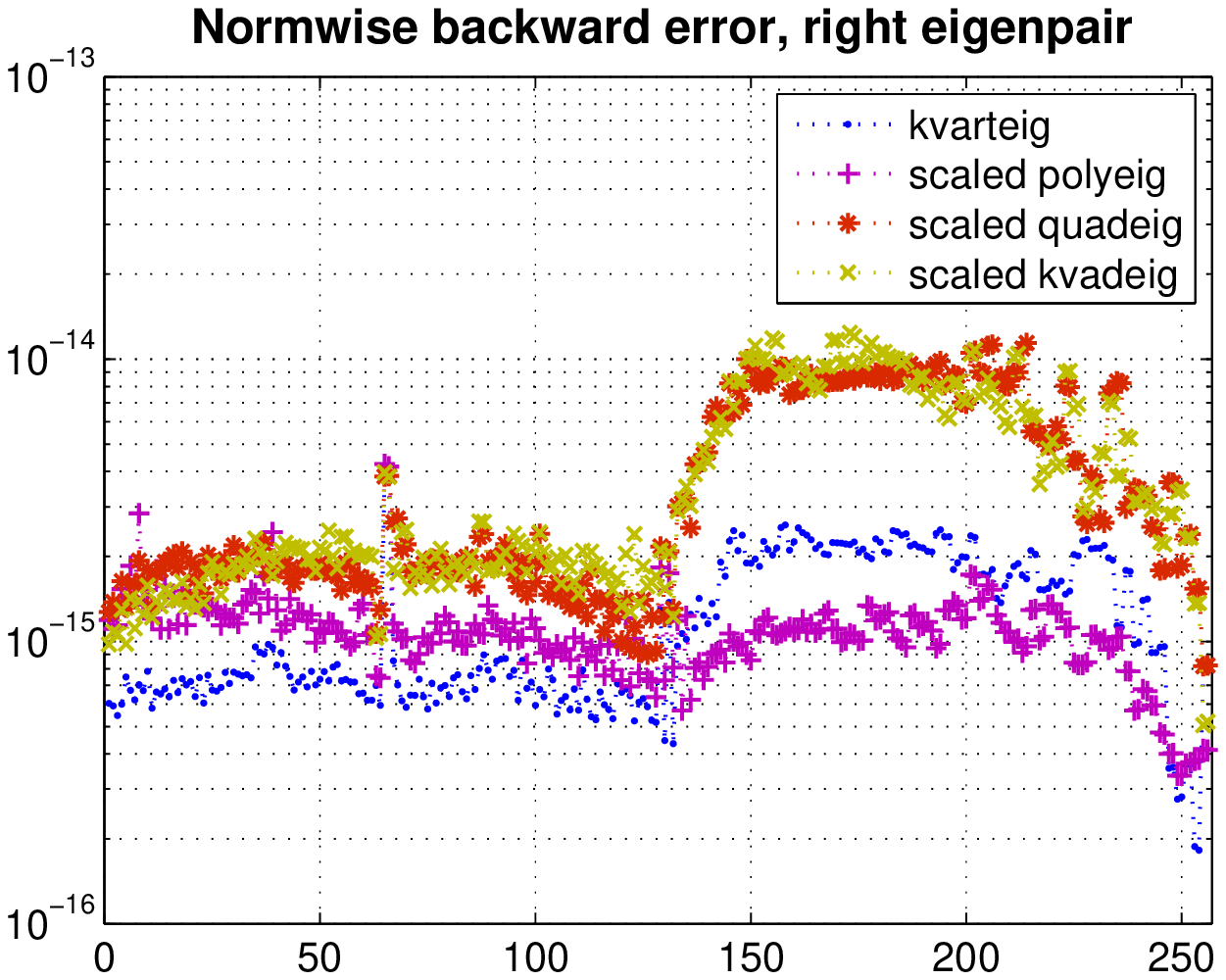}
				
			\end{minipage}%
			\begin{minipage}{.5\textwidth}
				\centering
				\includegraphics[width=0.99\textwidth]{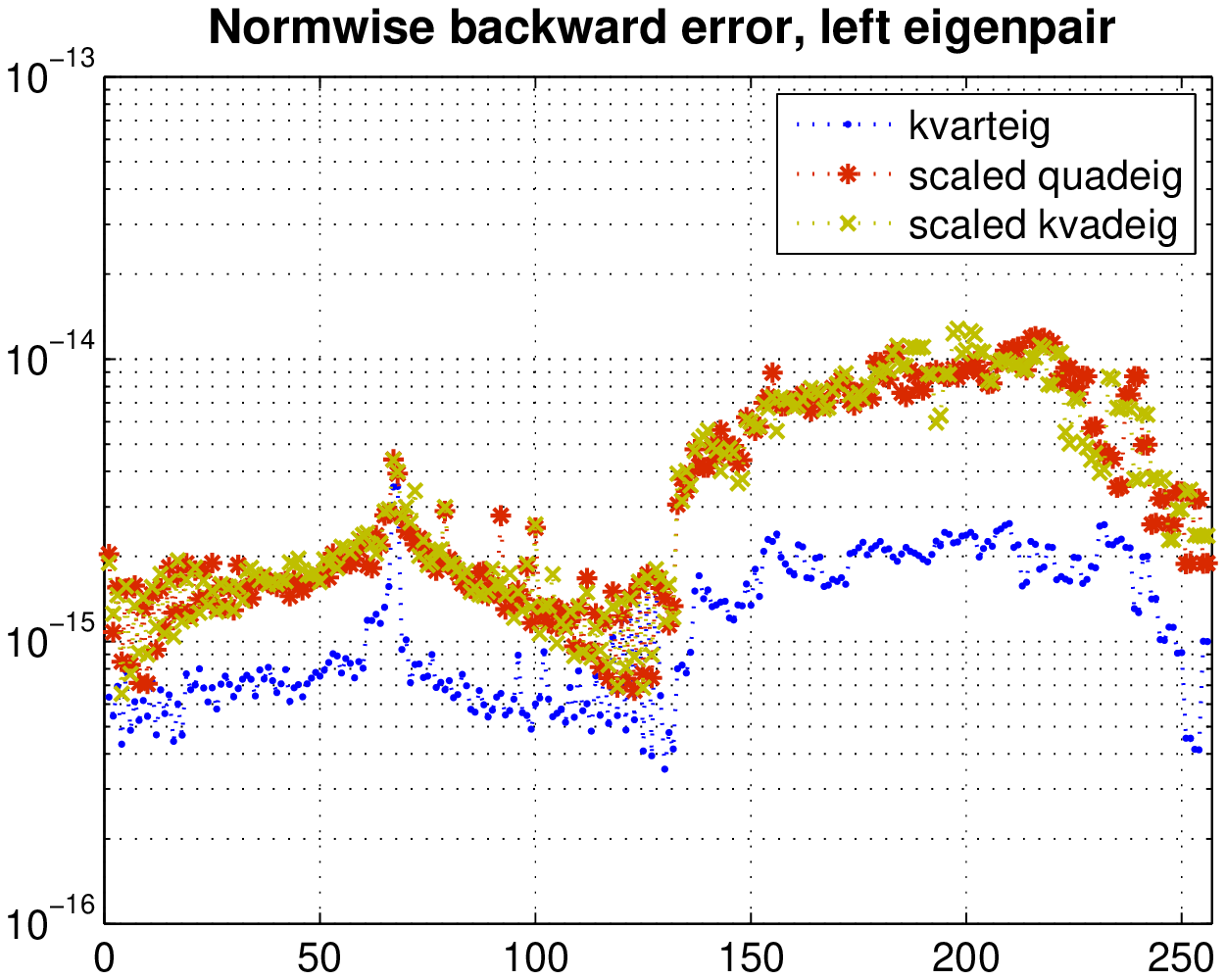}	
				
			\end{minipage}
			\caption{(Example \ref{NUMEX-OS-R-1000-scaled}.) The norm-wise backward errors for all computed eigenpairs for the \texttt{orr\_sommerfeld} example with $n=64$, $\omega = 0.26943$ and $R=10000$.}
			\label{fig:orr1000_R10000_norm_right_left-scaled}
		\end{figure}
\vspace{-2mm}		
		\begin{figure}[ht]
			\centering
			\begin{minipage}{.5\textwidth}
				\includegraphics[width=0.99\textwidth]{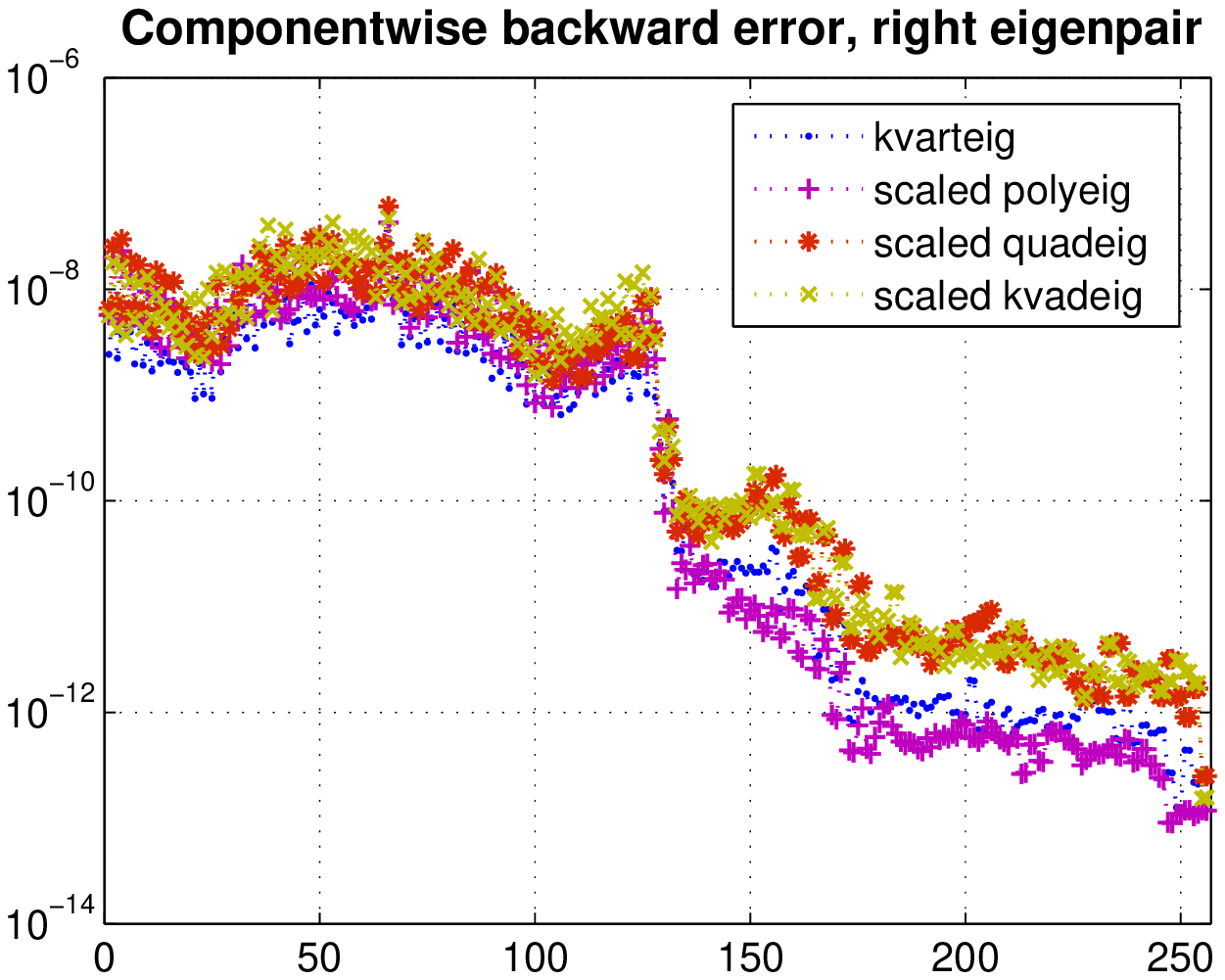}
				
			\end{minipage}%
			\begin{minipage}{.5\textwidth}
				\centering
				\includegraphics[width=0.99\textwidth]{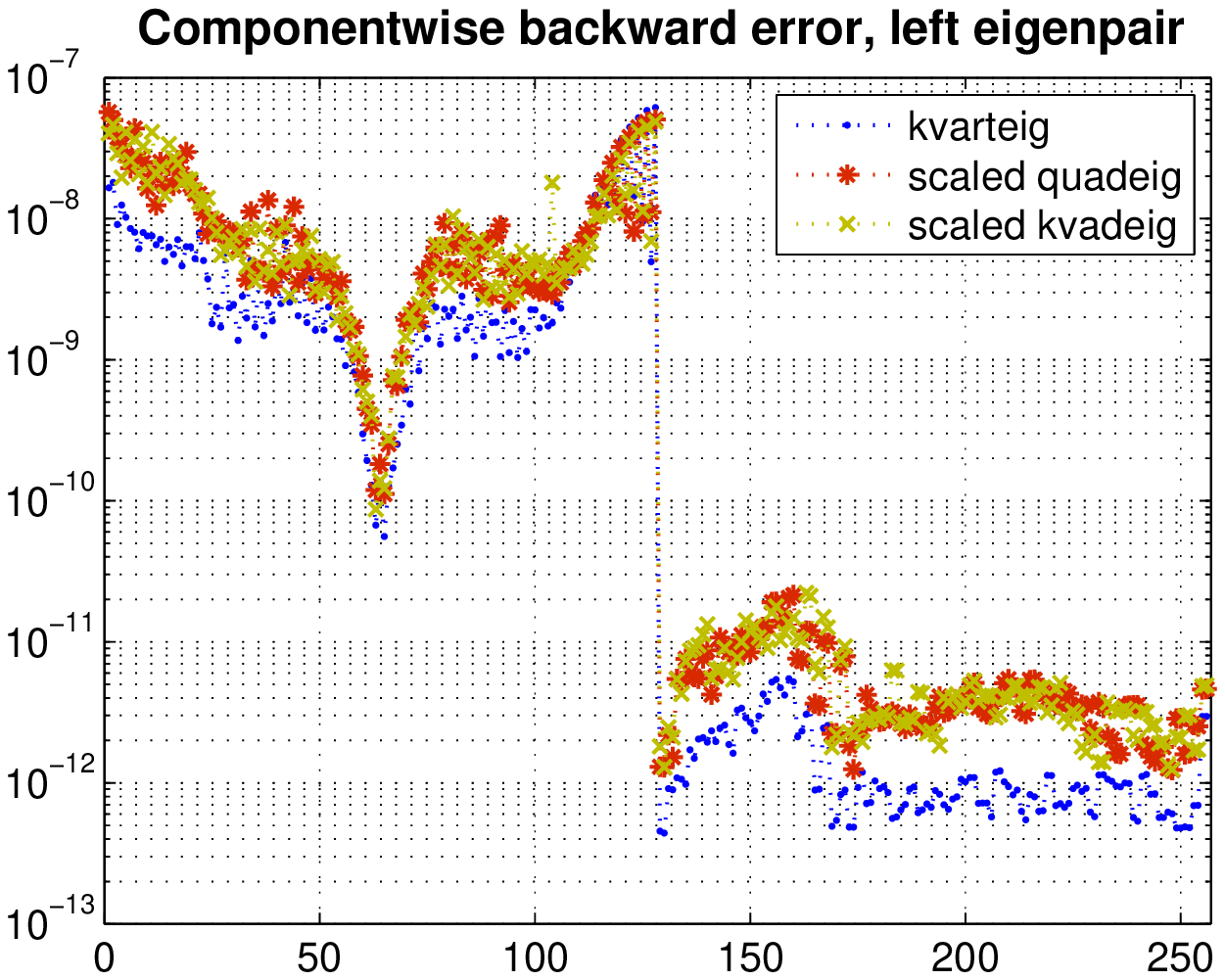}	
				
			\end{minipage}
			\caption{(Example \ref{NUMEX-OS-R-1000-scaled}.) The component-wise backward errors for all computed eigenpairs for the \texttt{orr\_sommerfeld} example with $n=64$, $\omega = 0.26943$ and $R=10000$.}
			\label{fig:orr1000_R10000_component_right_left-scaled}
		\end{figure}
	
	\end{example}	
\vspace{-3mm}
\begin{example}\label{NUMEX-3}
We continue experimenting with the \texttt{orr\_sommerfeld} example; we choose the default values of the Reynolds number $R$ and the frequency $\omega$, but increase the dimension to $n=1000$, and compute all $4000$ eigenpairs. {The matrix coefficients are scaled using the same strategy as in the previous example.\footnote{Without parameter scaling of the initial data, the Matlab function \texttt{polyeig} failed completely -- all computed eigenvalues were of the form $\pm \texttt{Inf} \pm \texttt{Inf}\iu$.}}

An application of \texttt{quadeig} to the  quadratification (\ref{eq:SecondComapnionFormGrade2}) returned $282$ infinite eigenvalues. {With \texttt{kvadeig} and the same quadratification, $31$ infinite eigenvalues are detected.} 
On the other hand, if we use balancing (\texttt{balanced\_kvadeig}), the leading coefficient matrix is declared regular, and no infinite eigenvalues are detected. The difference is mainly due to the softer drop-off truncation in the rank revealing QR factorization.
The result of \texttt{kvarteig} also depends on the truncation strategy. If the truncation of the pivoted QR factorization is done relative to the norm of $A$, the numerical rank is $988$, meaning that $12$ infinite eigenvalues are deflated immediately in the preprocessing phase. In the case of drop-off strategy, the matrix $A$ is not numerically  rank deficient. 
	
	The  norm-wise and component-wise backward errors for the computed right and left eigenpairs are shown in  Figures \ref{fig:orr_sommerfeld_1000_scaled_nbe_right}, \ref{fig:orr_sommerfeld_1000_scaled_nbe_left}, \ref{fig:orr_sommerfeld_1000_scaled_cbe_right}, \ref{orr_sommerfeld_1000_scaled_cbe_left}.
	

	\begin{figure}[ht]
			\includegraphics[scale=0.50,width=0.99\textwidth]{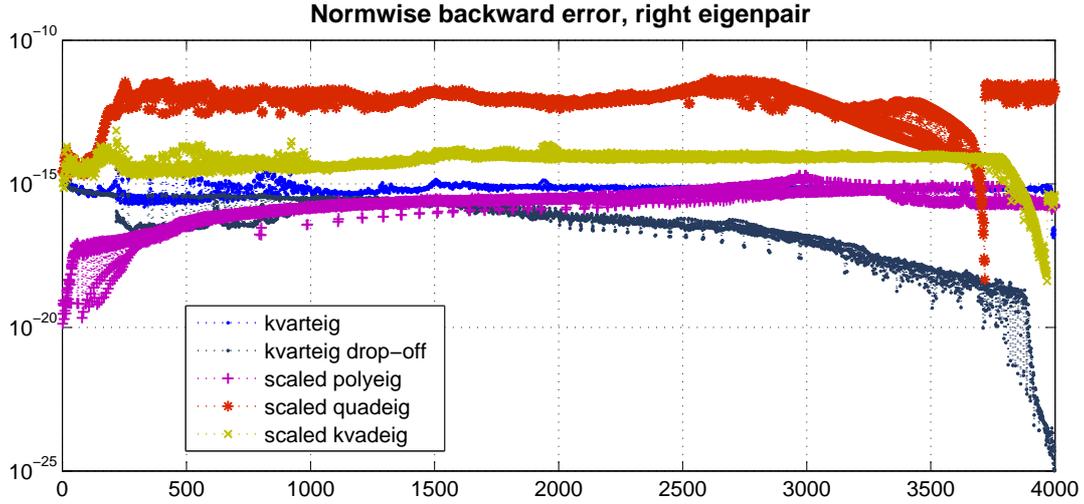}
		\caption{(Example \ref{NUMEX-3}.) Norm-wise  backward errors for the  right eigenpairs. \label{fig:orr_sommerfeld_1000_scaled_nbe_right}}
		\end{figure}
		\begin{figure}[ht]
			\includegraphics[scale=0.50,width=0.99\textwidth]{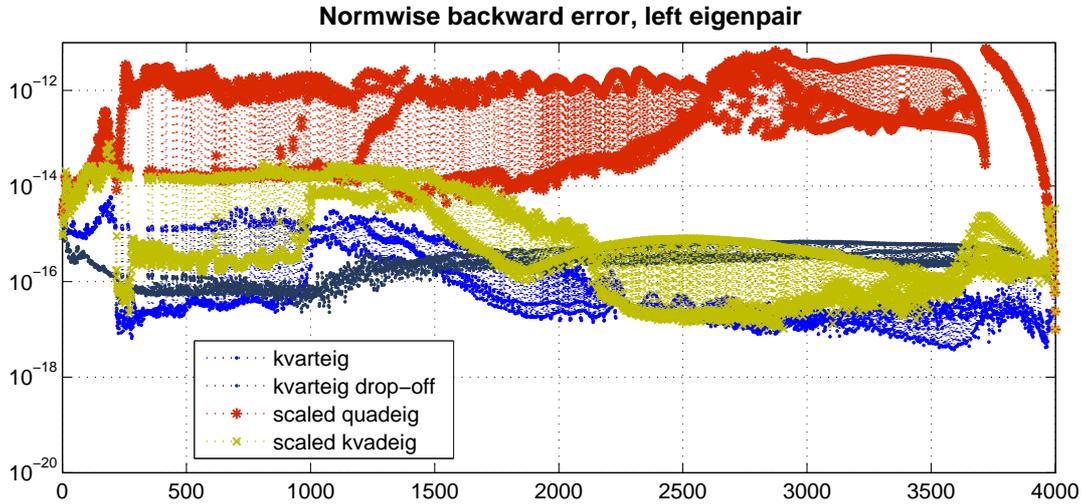}
		\vspace{-1mm}
		\caption{(Example \ref{NUMEX-3}) Norm-wise  backward errors for the left eigenpairs. \label{fig:orr_sommerfeld_1000_scaled_nbe_left}}
		\end{figure}
		\begin{figure}[ht]
			\includegraphics[scale=0.50,width=0.99\textwidth]{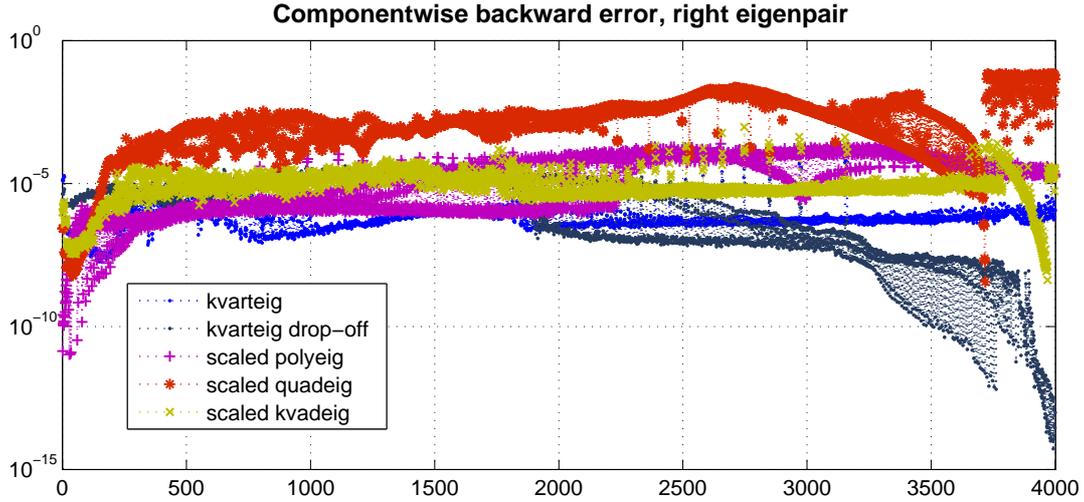}
		\vspace{-1mm}
		\caption{(Example \ref{NUMEX-3}.) Component-wise backward errors for the right eigenpairs. \label{fig:orr_sommerfeld_1000_scaled_cbe_right}}
		\end{figure}

		\begin{figure}[ht]
			\includegraphics[scale=0.50,width=0.99\textwidth]{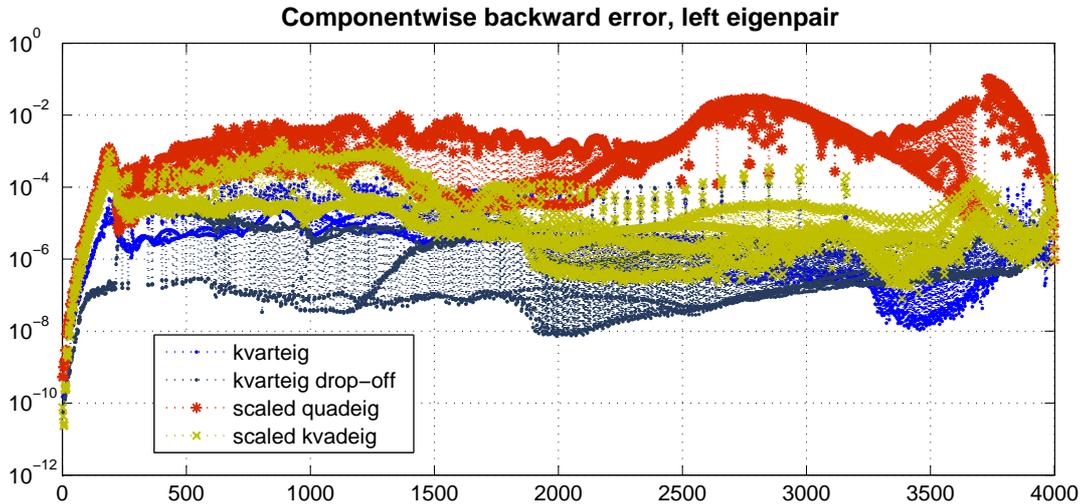}
		\caption{(Example \ref{NUMEX-3}.) Component-wise backward errors for the left eigenpairs. \label{orr_sommerfeld_1000_scaled_cbe_left}}	
	\end{figure}

\end{example}

\begin{example}\label{EX-mirror-transposed}
In this example, we use transposed matrices from the \texttt{mirror} example\footnote{Recall, we analyzed this example in \S \ref{NUMEX-4}, where we argued that there are nine zero and nine infinite eigenvalues.} and scale them as described in \S \ref{SSS=param-scaling}. The number of zero and infinite eigenvalues 
found by the four algorithms were: \texttt{polyeig} (no zeros and $5$ infinities);
\texttt{quadeig} ($7$ zeros and $9$ infinities); \texttt{kvadeig} and \texttt{kvarteig} $9$ zeros and $9$ infinite eigenvalues. The component-wise {and the norm-wise} backward errors are given in Figure \ref{fig:mirror-transposed-cbe-right}.
 

	\begin{figure}[ht]
		\centering
		\begin{minipage}{.5\textwidth}
			\includegraphics[width=0.99\textwidth]{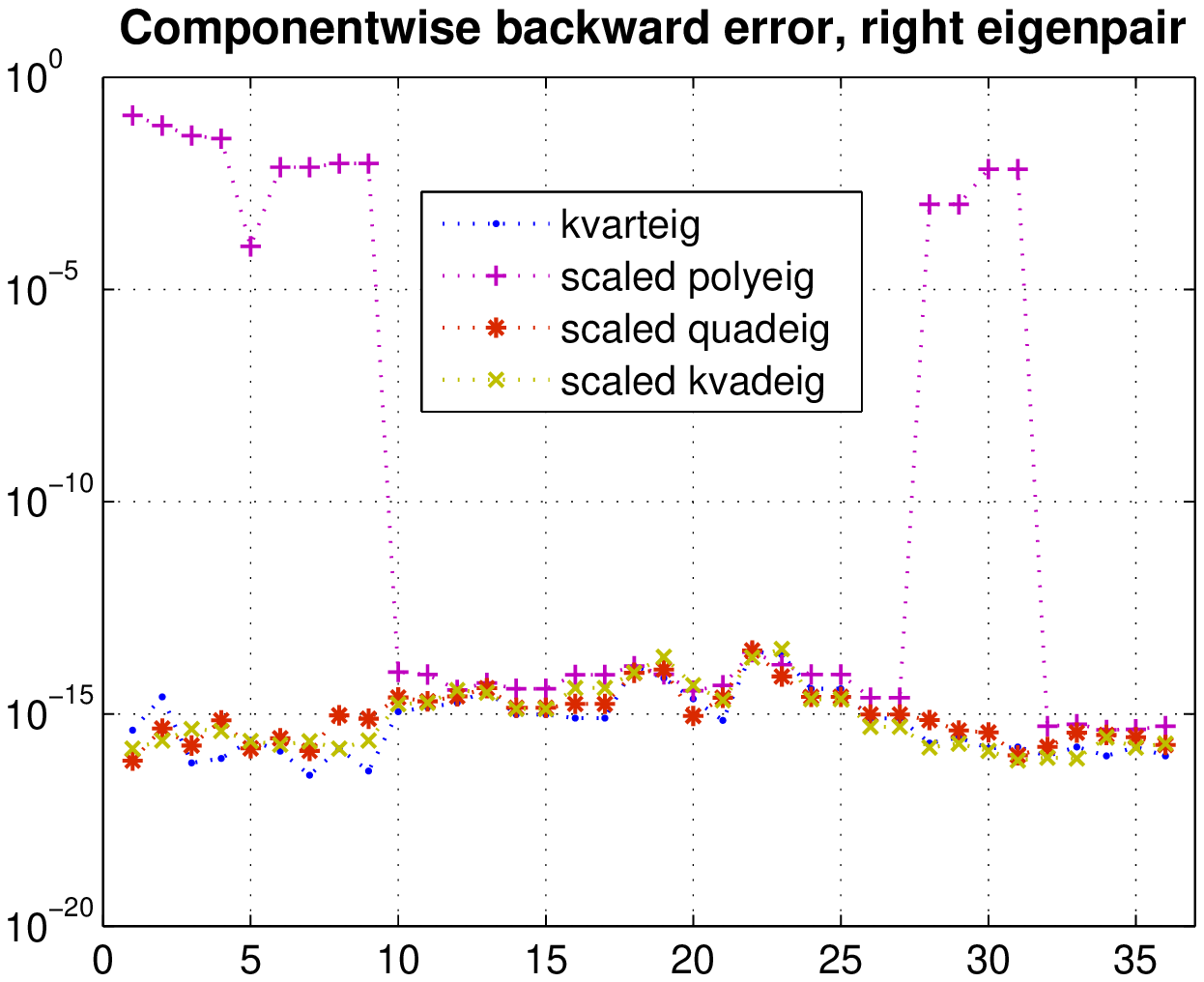}	
		\end{minipage}%
		\begin{minipage}{.5\textwidth}
			\centering
			\includegraphics[width=0.99\textwidth]{mirror_nbe_right_scaled.eps}	
		\end{minipage}
		\caption{(Example \ref{EX-mirror-transposed}) \emph{Left panel}: Component-wise backward errors for the transposed \texttt{mirror} example. \emph{Right panel}: Norm-wise backward error for the original \texttt{mirror} example (This is the right panel from Figure \ref{fig:mirrorBE}, here given for comparison.)}
		\label{fig:mirror-transposed-cbe-right}
	\end{figure}	
\end{example}	
		
\begin{example}\label{NUMEX:balance-butterfly}
	 In this example we illustrate potential benefits of equilibration of the coefficient matrices on the element level, mentioned in Remark \ref{REM=Balancing}. Such diagonal scalings \emph{balance} the absolute values of nonzero entries over all matrices. 
	 
	 We take the \texttt{butterfly} example and pre-multiply its coefficient matrices   with diagonal matrix $\varDelta$ with randomly permuted powers $2^i$, $i=1,\ldots, n=64$ on the diagonal. This is an entirely artificial step to simulate a situation with ill-conditioning caused by removable scaling (that may originate in an inappropriate scale of physical units). We obtain an equivalent problem, but numerical algorithms may be more or less sensitive to this change of representation.  
	
	 Then, we compute balancing matrices $\varDelta_{\ell}$, $\varDelta_r$ (see Remark \ref{REM=Balancing}) and examine how this preprocessing ($(A,B,C,D,E)\rightsquigarrow \varDelta_{\ell}(A,B,C,D,E)\varDelta_{r}$) influences the numerical accuracy of the algorithms under study. The computed component-wise backward errors,  shown in Figure \ref{fig:balance-buttefly1}, 
clearly demonstrate the impact of the balancing $(A,B,C,D,E)\rightsquigarrow \varDelta_{\ell}(A,B,C,D,E)\varDelta_{r}$.
	\begin{figure}[ht]
		\centering
		\begin{minipage}{.5\textwidth}
			\includegraphics[width=0.99\textwidth]{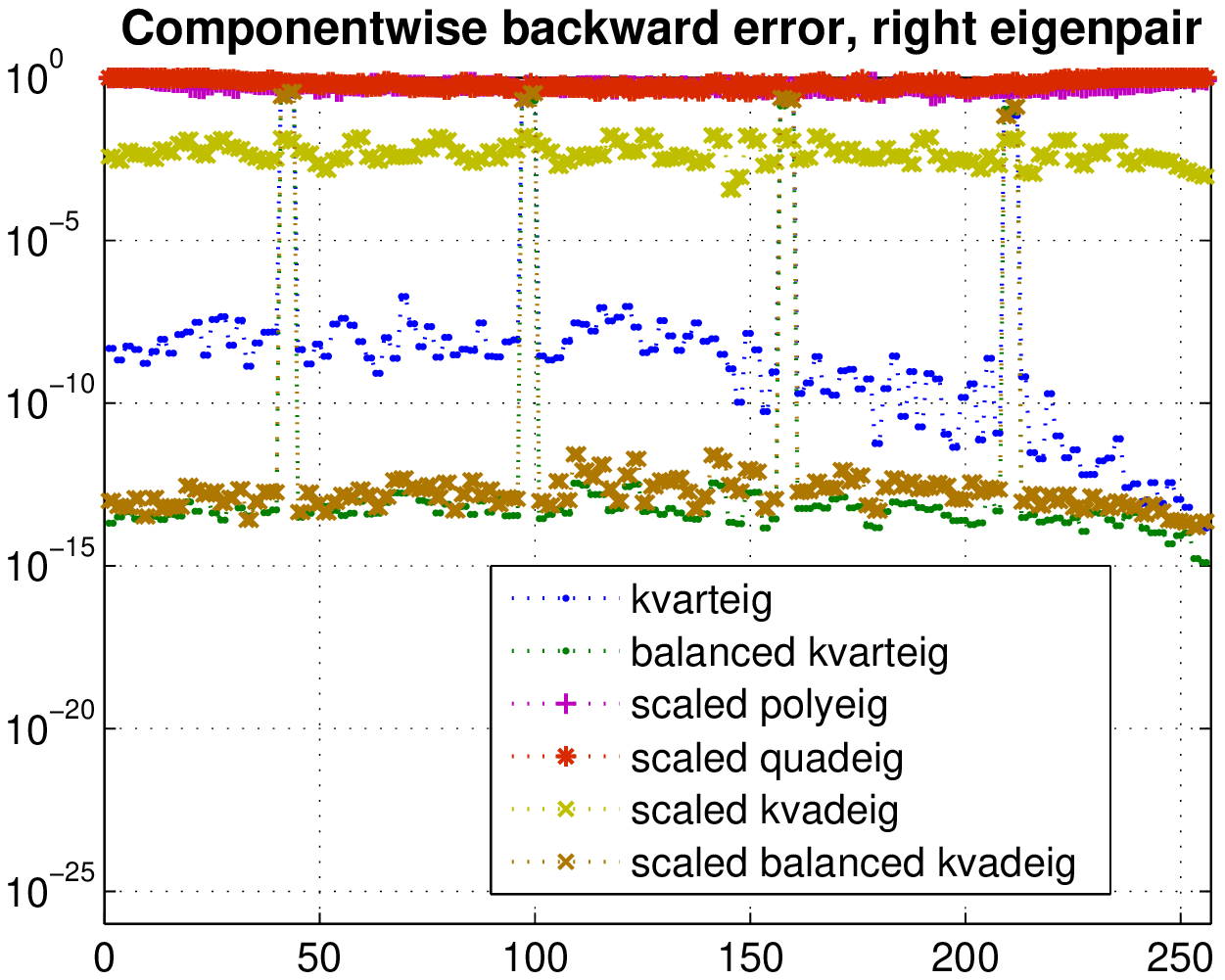}
			
		\end{minipage}%
		\begin{minipage}{.5\textwidth}
			\centering
			\includegraphics[width=0.99\textwidth]{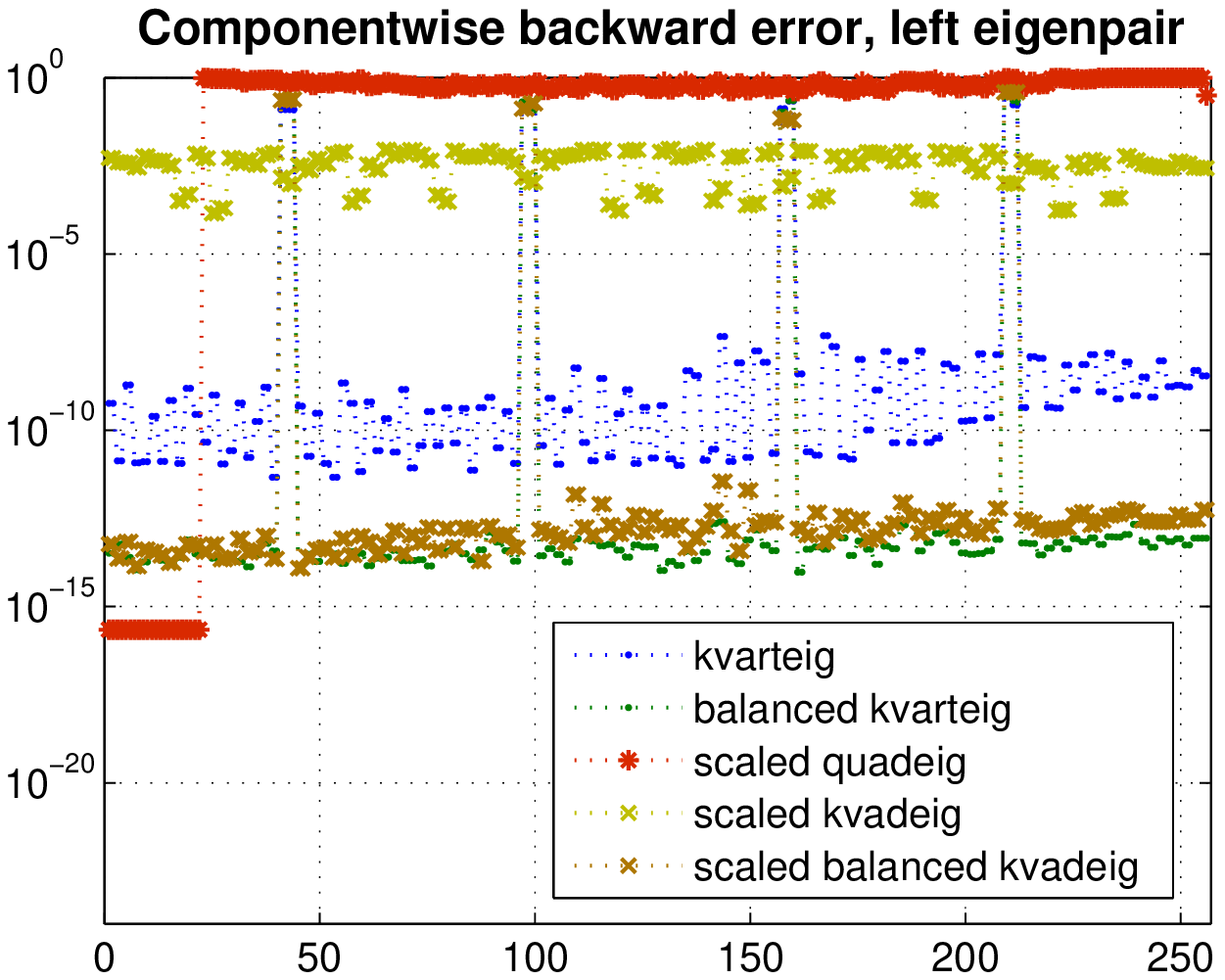}	
			
		\end{minipage}
		\caption{(Example \ref{NUMEX:balance-butterfly}, \texttt{butterfly}.) Component-wise backward errors for the modified \texttt{butterfly} example, where the coefficients are premultiplied by a diagonal matrix $\varDelta$, $(A,B,C,D,E)\rightsquigarrow \varDelta (A,B,C,D,E)$. }
		\label{fig:balance-buttefly1}
	\end{figure}
 Note also that without balancing \texttt{kvadeig} still performs well, much better than \texttt{polyeig} and \texttt{quadeig} under the same conditions.	
			
			
\end{example}

\begin{example}
	In our last example, we checked the least squares approach to recovering the eigenvectors, as described in \S \ref{SSS=LS-vectors}. The computed backward errors in all tested cases were comparable with the method of selecting the vector with smallest residual. {We believe that this least squares approach could be useful for getting good eigenvectors for selected eigenvalues that are of particular importance in some applications.}
	\end{example}

\section{Concluding remarks}
We have shown  that the proposed algorithm \texttt{kvarteig} for solving quartic eigenvalue problems is a useful contribution that fills the gap in the toolbox for the polynomial eigenvalue problems,  both for the full solution of medium size non-structured problems and for solving the projected problems in subspace based methods for large scale structured/sparse problems.
Numerical experiments with the benchmark examples from the NLEVP collection show that \texttt{kvarteig} is superior to \texttt{polyeig} from Matlab, or \texttt{quadeig} applied to a quadratification of the original quartic problem. Further, the numerical performances of \texttt{kvadeig} on the quadratificaton of the quartic problem additionally justify the modifications that underpinned the development both in \cite{KVADeig-arxiv} and in this paper. 

Given the wide spectrum of applications of the quartic eigenvalue problem, we are certain that our proposed algorithm will prove useful in many computational tasks in applied sciences and engineering. Further, the presented techniques can be adapted for other methods and suitable linearizations of polynomial eigenvalue problems. 

An early version of this work is available at \cite{2019arXiv190507013D}.

\section*{Acknowledgement}
{This research has been supported by  the Croatian Science Foundation (CSF) grants IP-2019-04-6268, and in part (the second author) UIP-2019-04-5200.}
Parts of this work originate from the second author's thesis \cite{ISG-thesis-2018}.
	The authors thank to Serkan Gugercin (Virginia Tech, Blacksburg), Luka Grubi\v{s}i\'{c} and Zvonimir Bujanovi\'{c} (University of Zagreb) for valuable comments, and in particular to the three anonymous referees for their constructive criticism and detailed reports. 




\bibliography{KVARTeig_References}
\bibliographystyle{plain} 


\end{document}